\numberwithin{equation}{section}
\theoremstyle{plain}
\newtheorem{theorem}{Theorem}[section]
\newtheorem*{theorem*}{Theorem}
\newtheorem{lemma}[theorem]{Lemma}
\newtheorem{proposition}[theorem]{Proposition}
\newtheorem{cor}[theorem]{Corollary}
\theoremstyle{remark}
\newtheorem{remark}[theorem]{Remark}
\theoremstyle{definition}
\newtheorem*{ack}{Acknowledgments}
\newcommand{\id}{\mathbbm{1}}
\newcommand{\map}{\mathrm{Map}}
\newcommand{\mapp}{\mathrm{Map}_*}
\newcommand{\mat}{\mathrm{Mat}}
\newcommand{\Z}{\mathbb{Z}}
\title{On gauge groups over high dimensional manifolds\\ and self-equivalences of $H$-spaces}
\author{Ingrid Membrillo-Solis}
\address{Mathematical Sciences, University of Southampton, University Road, Southampton SO17~1BJ, United Kingdom}
\email{i.membrillo-solis@soton.ac.uk}
\date{\today}
\subjclass[2010]{55P45, 54C35, 55P10 (primary), 55R10, 55R25 (secondary)}
\keywords{Gauge groups, sphere bundles, self-equivalences, homotopy decomposition}
\begin{document}

\begin{abstract}
Let $Y$ be a pointed space and let $\mathcal E(Y^r)$ be the group of based self-equivalences of $Y^r$, $r\geq 2$. For $Y$ a homotopy commutative $H$-group we construct a subgroup $\mathcal E_{\mat}(Y^r)$ of $\mathcal E(Y^r)$ which has a group structure isomorphic to either $GL_r(\mathbb Z)$, or $GL_r(\mathbb Z_d)$, $d\geq 2$.  We classify principal bundles over connected sums of $q$-sphere bundles over $n$-spheres, and use the group $\mathcal E_{\mat}(Y^r)$ to obtain homotopy decompositions of their gauge groups. Using these decompositions we give an integral classification, up to homotopy, of the gauge groups of principal $SU(2)$-bundles over certain 2-connected 7-manifolds with torsion-free homology. 
\end {abstract}

\maketitle

\section{Introduction}
Let $X$ be a connected $CW$-complex and $G$ be a Lie group. The isomorphism classes of principal $G$-bundles over $X$ are in one-to-one correspondence with the set $[X,BG]$ of unpointed homotopy classes of maps from $X$ to the classifying space of $G$. Given a principal $G$-bundle $P$ over $X$, the gauge  group of $P$, denoted $\mathcal G^P(X)$, is the group of its bundle  automorphisms covering the identity on $X$. The pointed gauge group $\mathcal G_*^P(X)$ consists of all bundle automorphisms that pointwise fix the fibre at the base point. Endowed with the compact-open topology, $\mathcal G^P(X)$ (resp.\ $\mathcal G_*^P(X))$ is homotopy equivalent to the loop space of the connected component of the mapping space ${\rm{Map}}(X,BG)$ (resp.\ ${\rm{Map}}_*(X,BG))$ which contains the map that classifies the bundle \cite{AB}. Although the set of isomorphism classes of principal $G$-bundles over a finite $CW$-complex $X$ might be infinite, there exist only finitely many distinct homotopy types among the gauge groups \cite{CS}.

The topology of gauge groups and their associated classifying spaces  has received considerable attention due to their connections to mathematical physics and other areas in mathematics. Particular attention has been paid in counting the number of homotopy types of gauge groups over surfaces and 4-manifolds. Although new ideas coming from differential geometry and mathematical physics suggest the possibility of extending gauge theories to high dimensions \cite{DT}, the homotopy theory of  gauge groups over high dimensional manifolds has been barely explored. Research done in this direction includes the study of gauge groups over high dimensional spheres \cite{HKK,KKT,Spf}, $S^3$-bundles over $S^4$ \cite{IMS} and $(n-1)$-connected $2n$-manifolds \cite{RH1}. 

The study of the group of self-homotopy equivalences $\mathcal E(Y)$ of a topological space $Y$ has a long tradition in homotopy theory (see for instance \cite{Ark},\cite{Rut2}). Very little is known, however, about this group when $Y$ is a product or a wedge sum of spaces. Moreover, the applications of the group of self-equivalences require further investigations. In this work  we present an application of the group $\mathcal E(Y)$ to the homotopy classification of mapping spaces. 

It is well known that there are homeomorphisms $\map (\bigsqcup_{i=1}^r X_i,Z)\cong  \prod_{i=1}^r \map (X_i,Z)$ and $\mapp(\bigvee_{i=1}^r X_i,Z)\cong  \prod_{i=1}^r \mapp (X_i,Z)$. Similar decompositions do not hold in general for $\map(\bigvee_{i=1}^r X_i,Z)$. It has been shown that the loops on these kinds of mapping spaces appear in homotopy decompositions of gauge groups of principal $G$-bundles over  7-dimensional manifolds \cite{IAM2}. For suitable spaces $X_i$ and $Z$, $Y=\mapp(X_i,Z)$ is a homotopy commutative  $H$-group and so is $\prod_{i=1}^r\mapp(X_i,Z)$. We construct a subgroup of the group of self-equivalences of the $r$-fold cartesian product of certain homotopy commutative $H$-groups $Y$. We show that this construction can be used to decompose the  loop spaces of the connected components of $\map(\bigvee_{i=1}^r X_i,Z)$. As a result, we provide full decompositions of the gauge groups associated to $q$-sphere bundles over $n$-spheres. Finally, we use these decompositions to give a complete classification,  up to homotopy, of $SU(2)$-gauge groups over connected sums of certain $S^3$-bundles over $S^4$ with cross sections.

Let $\mat_r(\Z)$ be the set of $r$-by-$r$ matrices with integer coefficients, and let $GL_r(\Z)$ be the subset of invertible matrices.  Given an $H$-group $Y$, and a matrix $A = (a_{ij}) \in \mat_r(\Z)$, we define a self-map, called a matrix map, on the $r$-fold cartesian product of $Y$ as the composite
\begin{equation}
\xymatrix{
\prod_{j=1}^rY\ar[r]^-{\Delta^r}&\prod_{i=1}^r\prod_{j=1}^rY\ar[rr]^-{\prod_{i=1}^r\prod_{j=1}^r \mathbbm a_{ij}}&&\prod_{i=1}^r \prod_{j=1}^rY\ar[rr]^-{\prod_{i=1}^r\mu^r}&&\prod _{i=1}^rY,
}
\end{equation}
where $\Delta^r$ is the $r$-fold diagonal map, $\mu^r$ is the $r$-fold multiplication and $\mathbbm{a}_{ij}$ is the $a_{ij}$-th power map. We use the notation $\mathcal E_A$ for the matrix map associated to $A\in GL_r(\Z)$. The greatest common divisor of $x_1,\dots,x_r\in\mathbb Z_m$ is defined as ${\gcd}_m(x_1,\dots,x_r):=\gcd(\tilde x_1,\dots,\tilde x_r,m)$, where $\tilde x_i$ is any representative of the class of $x_i$. Let $\mathbb Z_0:=\mathbb Z$ and $\mathbbm 1_Y:Y\to Y$ be the identity map. Given a group $G$ and an element $g\in G$, let $o(g)$ be the order of $g$. Our first result is a characterisation of $\mathcal E_{\mat}(Y^r)$, the set of homotopy classes of matrix maps $\mathcal E_A$ of $Y^r$. Given a space $X$ such that $\pi_0(X)=\mathbb Z_m^r$, let $X_K$ be the connected component of $X$ indexed by $K=(k_1,\dots,k_r)\in \mathbb Z_m^r$.

\begin{theorem}\label{t:GLn}
Let $Y$ be an $H$-group and let $r \geq 2$. If $Y$ is homotopy commutative then the following hold: 
\begin{enumerate}
\item The homotopy set $\mathcal E_{\mat}(Y^r)$ has a group structure isomorphic to $GL_r(\mathbb Z)$, if $o(\mathbbm1_Y)$ is infinite, or $GL_r(\mathbb Z_d)$, if $o(\mathbbm 1_Y)=d\geq2$;
\item Every map $\mathcal E_A\in\mathcal E_{\mat}(Y^r)$ is an $H$-map;
\item  Suppose $\pi_0(Y)=\mathbb Z_m$ for some $m \geq 0$. Let $K=(k_1,\dots,k_r)$ and $K'=(k_1',\dots,k_r')$, where $k_i,k_i'\in\pi_0(Y)$, for all $1\leq i\leq r$. Then there exists a matrix $A\in GL_r(\mathbb Z)$ such that the restriction of the map $\mathcal E_A$ to $(Y^r)_K$ induces a self-equivalence $(Y^r)_K \simeq (Y^r)_{K'}$ if and only if $\gcd_m(k_1,\dots,k_r)=\gcd_m(k_1',\dots,k_r')$. 
\end{enumerate}
\end{theorem}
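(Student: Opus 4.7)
The plan is to exploit the fact that, for $Y$ a homotopy commutative $H$-group, the homotopy set $[X,Y]$ is an abelian group for every pointed $X$, and the $k$-th power map $\mathbbm{k}\colon Y\to Y$ represents ``multiplication by $k$'' in this group. The central computation for Part~(1) is the homotopy
\[
\mathcal{E}_A\circ\mathcal{E}_B\;\simeq\;\mathcal{E}_{AB}.
\]
Writing the $H$-multiplication additively, the $i$-th coordinate of $\mathcal{E}_A(y_1,\dots,y_r)$ is $\sum_j a_{ij}y_j$, so the composite evaluates to $\sum_j a_{ij}\bigl(\sum_k b_{jk}y_k\bigr)$, which the homotopy commutativity of $\mu_Y$ lets us rearrange into $\sum_k\bigl(\sum_j a_{ij}b_{jk}\bigr)y_k$. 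Together with $\mathcal{E}_{I_r}\simeq\mathbbm{1}_{Y^r}$, this yields a monoid homomorphism $\mat_r(\mathbb{Z})\to[Y^r,Y^r]$, $A\mapsto[\mathcal{E}_A]$. Since $\mathbbm{a}\simeq\mathbbm{b}$ iff $a\equiv b\pmod{o(\mathbbm{1}_Y)}$, a coordinatewise comparison shows this map is injective when $o(\mathbbm{1}_Y)$ is infinite and descends to an injection from $\mat_r(\mathbb{Z}_d)$ when $o(\mathbbm{1}_Y)=d$. Because the map is an isomorphism of monoids onto its image, it restricts to an isomorphism of unit groups, giving $\mathcal{E}_{\mat}(Y^r)\cong GL_r(\mathbb{Z})$, respectively $GL_r(\mathbb{Z}_d)$.

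Part~(2) is a direct verification that $\mathcal{E}_A\circ\mu_{Y^r}\simeq\mu_{Y^r}\circ(\mathcal{E}_A\times\mathcal{E}_A)$, which in additive notation reduces to the distributivity identity
\[
\sum_j a_{ij}(y_j+y_j')\;\simeq\;\sum_j a_{ij}y_j\;+\;\sum_j a_{ij}y_j',
\]
once again exploiting the homotopy commutativity of $\mu_Y$ to interchange summands.

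For Part~(3) I would first identify the action of $\mathcal{E}_A$ on $\pi_0$: since $\mathbbm{a}$ induces multiplication by $a$ on $\pi_0(Y)=\mathbb{Z}_m$, the map $\mathcal{E}_A$ induces left multiplication by the reduction of $A$ on $\pi_0(Y^r)=\mathbb{Z}_m^r$. Because $\mathcal{E}_A$ is a self-equivalence of $Y^r$, it restricts to a homotopy equivalence between corresponding components, so $\mathcal{E}_A$ restricted to $(Y^r)_K$ yields an equivalence with $(Y^r)_{K'}$ precisely when $AK\equiv K'\pmod{m}$. The statement then reduces to the classical fact that the orbits of the $GL_r(\mathbb{Z})$-action on $\mathbb{Z}_m^r$ are classified by $\gcd_m$: the forward implication is the invariance of $\gcd_m$ under elementary row operations, and the converse follows from Bezout's identity and the Smith normal form, which produce an element of $GL_r(\mathbb{Z})$ carrying any $K$ to the canonical representative $(\gcd_m(K),0,\dots,0)$.

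The principal obstacle is the careful homotopical bookkeeping in Part~(1): composition of maps distributes over the $H$-sum only up to homotopy, and right distributivity (precomposition with a sum) is not automatic, requiring the post-composed map to be an $H$-map. This is precisely where the homotopy commutativity of $\mu_Y$ is invoked (and is also the content of Part~(2)). The approach is to organise the rearrangements as a commutative diagram of maps between products whose 2-cells are explicit homotopies assembled from the structural homotopies of $(Y,\mu_Y)$.
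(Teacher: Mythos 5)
Your proposal is correct and follows essentially the same route as the paper: multiplicativity $\mathcal{E}_A\circ\mathcal{E}_B\simeq\mathcal{E}_{AB}$ obtained from the homotopy-commutativity/associativity rearrangement (the paper verifies this only on the generators $Q,T$ of $GL_r(\mathbb{Z})$, whereas you do it for general matrices), the ring of power maps to decide when two matrix maps are homotopic, and the classification of $GL_r$-orbits on $\mathbb{Z}_m^r$ by $\gcd_m$ for part (3). Your injectivity step via coordinatewise recovery of the entries, $\mathbbm a_{ij}=p_{Y_i}\circ\mathcal{E}_A\circ i_{Y_j}$, is if anything more explicit than what the paper records.
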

Theorem \ref{t:GLn} shows that if the order of the identity map of a homotopy commutative $H$-group $Y$  is infinite, then there are infinitely many classes of self-$H$-equivalences  of the $r$-fold cartesian product of $Y$. Examples of homotopy commutative $H$-groups include Eilenberg-MacLane spaces $K(\Gamma, n)$, $n\geq1$, where $\Gamma$ is an abelian group and, more generally, the loop spaces of $H$-groups. 
In \cite{Saw} Sawashita describes the group of self-$H$-equivalences of products $X_1\times\cdots\times X_r$ of $H$-groups $X_i$. We construct an explicit subgroup of this group for the case when the $H$-groups $X_i$ coincide and are homotopy commutative. 

We also explore the action of the group of self equivalences $\mathcal E_{\mat}(Y^r)$ on homotopy sets. 
Given a map $f:X\to Y$, for $1 \leq i \leq n$ let $f_i:X\to \prod _{i=1}^r Y$ be the map such that $$p_j\circ f_i=\begin{cases} f & j=i\\ 0 &j\neq i\end{cases}$$ where $p_j$ is the projection onto the the $j$-th factor.  For $Y$ an $H$-group and $k\in\mathbb Z$, let $F^{\mathbbm kf}$ be the homotopy fibre of the composite $\mathbbm kf:X\xrightarrow{f} Y\xrightarrow{\mathbbm k}Y$, where $\mathbbm k$ is the $k$-th power map. Let also $F^g$ be the homotopy fibre of a map $g: X \to \prod_{i=1}^r Y$. The following result shows that for certain maps we can obtain equivalences among their homotopy fibres. 

\begin{theorem}\label{t:SEdec}
Let $X$ be a pointed topological space, $Y$ be a homotopy commutative $H$-group and $f:X\rightarrow Y$ be a map. Let $m=o(f)$, if $f$ has finite order, and $m=0$, otherwise. Suppose $g=\sum_{i=1}^r k_if_i\in \langle f_1,\dots,f_r\rangle \leq [X,Y^r]$ for some $r \geq 2$ and let  $k=\gcd(m,k_1,\dots,k_n)$. Then there is a homotopy decomposition 
\begin{equation}
F^g\simeq F^{\mathbbm kf}\times\prod_{i=1}^{r-1}\Omega Y.
\end{equation}
In particular, if the order of $f$ is finite and the maps $g,g'\in \langle f_1,\dots,f_n\rangle$ have the same order then $F^{g} \simeq F^{g'}$.
\end{theorem}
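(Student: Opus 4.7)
The plan is to use Theorem \ref{t:GLn} to replace $g$ by a map whose only non-null coordinate is $\mathbbm k f$, and then split the resulting homotopy fibre by a direct pullback computation.

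First, I would construct a matrix $A \in GL_r(\mathbb{Z})$ such that $\mathcal{E}_A \circ g$ is homotopic to $(\mathbbm k f, *, \ldots, *): X \to Y^r$. Since $\mathcal{E}_A$ acts on the subgroup $\langle f_1, \ldots, f_r \rangle \leq [X, Y^r]$ by left multiplication on column vectors, this reduces to finding $A \in GL_r(\mathbb{Z})$ for which $A (k_1, \ldots, k_r)^T$ equals $(k, 0, \ldots, 0)^T$ in that subgroup. When $m = 0$ the subgroup is isomorphic to $\mathbb{Z}^r$ and $k = \gcd(k_1, \ldots, k_r)$, so Smith normal form furnishes such an $A$. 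When $m \geq 2$, write $k_i = k \ell_i$ and $m = k \tilde m$; the definition of $k$ forces $\gcd(\ell_1, \ldots, \ell_r, \tilde m) = 1$, so $(\ell_1, \ldots, \ell_r)$ is unimodular in $(\mathbb{Z}/\tilde m)^r$ and completes to a basis, and the resulting element of $GL_r(\mathbb{Z}/\tilde m)$ lifts to $A \in GL_r(\mathbb{Z})$ along the standard surjection $GL_r(\mathbb{Z}) \twoheadrightarrow GL_r(\mathbb{Z}/\tilde m)$.

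Next, by Theorem \ref{t:GLn}(1), $\mathcal{E}_A$ is a self-equivalence of $Y^r$, so its homotopy fibre is contractible. A standard pasting of homotopy pullbacks then yields $F^g \simeq F^{\mathcal{E}_A \circ g}$, and it remains to compute the homotopy fibre of $\hat g := (\mathbbm k f, *, \ldots, *)$. Viewing $\hat g$ as a product map into $Y \times Y^{r-1}$, its homotopy fibre is the fibre product $F^{\mathbbm k f} \times_X F^{*}$, where $F^{*}$ denotes the homotopy fibre of the null map $X \to Y^{r-1}$. Since $F^{*} \simeq X \times (\Omega Y)^{r-1}$, this fibre product collapses to $F^{\mathbbm k f} \times (\Omega Y)^{r-1}$, giving the claimed decomposition. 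For the final assertion, for finite $m$ the order of $g = \sum k_i f_i$ in $\langle f_1, \ldots, f_r \rangle \cong (\mathbb{Z}/m)^r$ equals $m/\gcd_m(k_1, \ldots, k_r) = m/k$, so $g$ and $g'$ sharing the same order share the same $k$, whence the main decomposition gives $F^g \simeq F^{g'}$.

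The main obstacle is the torsion case of the first step: one must invoke both that unimodular rows in $(\mathbb{Z}/\tilde m)^r$ extend to bases and that $GL_r(\mathbb{Z}) \twoheadrightarrow GL_r(\mathbb{Z}/\tilde m)$ is surjective, and then verify that the lifted $A$ acts on the class of $g$ exactly as desired, rather than yielding a different representative of the same subgroup element.
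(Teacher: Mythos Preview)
Your proposal is correct and follows essentially the same route as the paper: reduce $g$ to $(\mathbbm k f,*,\ldots,*)$ via a matrix self-equivalence, deduce $F^g\simeq F^{\hat g}$ from the fact that $\mathcal E_A$ is an equivalence, and then split the fibre of the product map. The only cosmetic difference is in locating $A$: the paper packages this into Proposition~\ref{p:linearaction} and Lemma~\ref{l:actionGLn}, where elementary row operations over $\mathbb Z$ carry $(k_1,\ldots,k_r)$ to $(k,0,\ldots,0)$ directly in $\mathbb Z_m^r$, so no separate lifting step $GL_r(\mathbb Z)\twoheadrightarrow GL_r(\mathbb Z/\tilde m)$ is needed; your Smith-normal-form and unimodular-completion argument achieves the same end.
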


Sphere bundles over spheres and their connected sums appear frequently in classification problems of manifolds. In the context of the homotopy theory of gauge groups, a natural direction to investigate is, therefore, the one related to gauge groups over manifolds that arise as connected sums of sphere bundles. Our work is mainly concerned with odd dimensional manifolds given as connected sums of $S^q$-bundles over $S^n$ of dimension at least $7$ that admit cross sections. Let $j_*:\pi_k(SO(q))\to \pi_k(SO(q+1))$ be the map induced by the the inclusion $SO(q)\hookrightarrow SO(q+1)$.  Given a group $G$, let $|G|_{min}$ be the cardinality of a minimal generating set of $G$.

\begin{theorem}\label{t:UGG}
 Let $k,k',m\in\mathbb N$ such that $k+k'\leq m$, and $G$ be a Lie group. Let $\mathcal F=\{M_i\}_{i=1}^r$ be a family of manifolds that arise as total spaces of $S^{q}$-bundles over $S^{n}$ with characteristic elements $\chi(M_i)=j_*\xi^i$, $\xi^i\in\pi_{n-1}(SO(q))$ and let $M=\sharp_{i=1}^rM_i$, $r\geq2$, be a connected sum. Let $t=|\pi_{n+q}(S^q)|_{min}$.  Suppose that one of the following conditions holds: 
\begin{enumerate}
\item $G=SU(m)$, $n=2k$, $q=2k'-1$, $2 \leq k' \leq k$;
\item $G=Sp(m)$, $n=4k$, $q=4k'-1$, $1 \leq k' \leq k$.
\end{enumerate}
Then, given a principal $G$-bundle over $M$ classified by $K=(k_1,\dots,k_r)\in \prod_{i=1}^r\pi_{n-1}(G)$, there exists an integer $\bar t$, $0\leq\bar t\leq min\{r,t\}$, and a map $S^{n+q-1}\xrightarrow{\alpha}\bigvee_{i=1}^{\bar t}S^q$ with homotopy cofibre $Y_\mathcal F$ such that the gauge group $\mathcal G^K(M)$ decomposes, up to homotopy, as follows 
 \begin{equation}\label{decomp}
 \mathcal G^{K}(M)\simeq \mathcal G^{\ell(K)}(S^n)\times \prod_{i=1}^{r-1}\Omega^{n}G\times \prod_{j=1}^{r-\bar t}\Omega^{q}G\times \mathrm{Map}_*(Y_{\mathcal F},G),
 \end{equation}
where $\ell(K)=\gcd(o(\delta_1),k_1,\dots,k_r)$ and $\delta_1$ is the connecting map in the  fibration $$\Omega^n_1BG\longrightarrow{\rm{Map}}^1(S^{n},BG)\xrightarrow{ev} BG.$$
Moreover, in the case \label{i:n4q3} $n = 4$, $q = 3$, the homotopy splitting \eqref{decomp} holds for any simply connected simple compact Lie group $G$ whenever $\gcd(|\pi_6(G)|,\xi^1,\dots,\xi^r)=1$, $\xi^i\in\pi_3(SO(3))\cong\mathbb Z$. In this case $\bar t=1$ and $S^6\xrightarrow{\alpha}S^3$ is a unit in $\pi_6(S^3)\cong\mathbb Z_{12}$.
\end{theorem}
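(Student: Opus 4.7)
The plan is to build a mapping-space fibration for $\mathcal G^K_*(M)$ from the cell structure of $M$, use Theorem \ref{t:GLn}(3) to consolidate the characteristic data via a $GL_r(\mathbb Z)$-change of basis, invoke Theorem \ref{t:SEdec} to split off trivial factors, and recover the unpointed gauge group through the evaluation fibration $\mathcal G^K_*(M)\to\mathcal G^K(M)\to G$. Since $\chi(M_i)=j_*\xi^i$ lies in the image of the stabilisation map, each $M_i$ admits a cross-section and has the homotopy type $(S^n\vee S^q)\cup_{\alpha_i}e^{n+q}$ with attaching map of the classical form $\alpha_i = [\iota_n,\iota_q] + \iota_q\circ J(\xi^i)$, where $J$ denotes the unstable $J$-homomorphism $\pi_{n-1}(SO(q))\to\pi_{n+q-1}(S^q)$. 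The connected sum therefore satisfies $M\simeq W\cup_\alpha e^{n+q}$ with $W=\bigvee_{i=1}^r(S^n\vee S^q)$ and $\alpha=\sum_i\alpha_i$, and applying $\mapp(-,BG)$ to the cofibration $S^{n+q-1}\xrightarrow{\alpha}W\to M$, restricting to the components indexed by $K$, and looping once yields
$$\mathcal G^K_*(M)\longrightarrow \prod_{i=1}^r(\Omega^n G\times \Omega^q G)\xrightarrow{\alpha^*}\Omega^{n+q-1} G.$$

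Under the dimensional hypotheses in cases (1) and (2) the vanishing of $\pi_{q-1}(G)$ together with the $H$-space structure of $G$ trivialises the Samelson-product contributions to $\alpha^*$, so that it reduces, up to the $\prod_i\Omega^q G$ factors, to a sum $(g_1,\dots,g_r)\mapsto\sum_i k_i\cdot g_i$ of scalar multiples of a common composite with $\delta_1$. Theorem \ref{t:GLn}(3) applied to the tuple $(\xi^1,\dots,\xi^r)\in\pi_{n+q}(S^q)^r$ consolidates it, modulo the gcd invariant controlled by $t=|\pi_{n+q}(S^q)|_{min}$, to at most $\bar t\leq\min\{r,t\}$ essential components, and identifies the cofibre $Y_{\mathcal F}$ of the reduced attaching map $S^{n+q-1}\to\bigvee^{\bar t}S^q$. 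Theorem \ref{t:SEdec} with $Y=\Omega^{n-1}G$ then splits the fibre of $\alpha^*$ as $\mathcal G^{\ell(K)}_*(S^n)\times(\Omega^n G)^{r-1}\times(\Omega^q G)^{r-\bar t}\times\mapp(Y_{\mathcal F},G)$, with $\ell(K)=\gcd(o(\delta_1),k_1,\dots,k_r)$ being precisely the gcd invariant predicted by Theorem \ref{t:GLn}(3); combining with the evaluation fibration promotes $\mathcal G^{\ell(K)}_*(S^n)$ to $\mathcal G^{\ell(K)}(S^n)$ and gives the claimed decomposition.

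The main obstacle is verifying that the natural Hilton--Milnor expansion of $\alpha^*$, which a priori involves cross-terms from Whitehead products $[\iota_n^i,\iota_q^j]$ and Samelson products with the bundle classifiers $k_i$, genuinely reduces under the stated dimensional hypotheses to the power-map situation required by Theorem \ref{t:SEdec}, and that the order of the resulting map is exactly $o(\delta_1)$ so that the gcd invariant $\ell(K)$ is correctly captured. In the special case $n=4$, $q=3$ with any simply connected simple compact Lie group $G$, the coprimality hypothesis $\gcd(|\pi_6(G)|,\xi^1,\dots,\xi^r)=1$ replaces the Lie-group-specific dimensional analysis: it forces $\bar t=1$ and makes the attaching map $S^6\to S^3$ a unit in $\pi_6(S^3)\cong\mathbb Z_{12}$, after which the rest of the argument proceeds as in the general case.
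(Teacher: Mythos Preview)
Your proposal conflates two distinct maps and, as a result, misapplies Theorem \ref{t:SEdec}. The attaching map $\alpha:S^{n+q-1}\to W$ of $M$ encodes only the characteristic elements $\xi^i$ of the sphere bundles $M_i\to S^n$; it does \emph{not} carry the bundle class $K=(k_1,\dots,k_r)$ of the principal $G$-bundle. Consequently the induced map $\alpha^*$ on mapping spaces cannot reduce to ``$\sum_i k_i\cdot g_i$ of scalar multiples of a common composite with $\delta_1$'': the $k_i$'s only enter through the \emph{evaluation} fibration, via its connecting map $\partial_K:G\to\prod_i\Omega^{n-1}G$, which the paper identifies (Lemma \ref{l:lang} and Corollary \ref{cor:lang}) as $(\mathbbm k_1\partial_1,\dots,\mathbbm k_r\partial_1)$. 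Moreover, $\alpha^*$ is a map from a product into a single factor, so it is not of the form $g:X\to Y^r$ required by Theorem \ref{t:SEdec}; that theorem is applied in the paper to $\partial_K$, whose fibre is $\mathcal G^K(\bigvee S^n)$, yielding $\ell(K)$.

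The paper separates the two inputs cleanly. The $\xi^i$'s control the suspension splitting of $M$ (Proposition \ref{p:Msuspension} via row reduction on the $J(\xi^i)$), producing $\bar t$ and $Y_{\mathcal F}$; this is used to identify the fibre $F^K$ of $p^*:\prod_i\Omega^{n-1}_{k_i}G\to\mapp^K(M,BG)$ (Lemma \ref{hofibre}). The cross-sections of the bundles give $p$ a right homotopy inverse, and the resulting splitting at the level of unpointed gauge groups is obtained not by ``promoting through the evaluation fibration'' as you suggest, but by building the diagram \eqref{d:ugg} from the naturality square \eqref{d:eval}: this exhibits a fibration $\mathcal G^K(\bigvee S^n)\to\mathcal G^K(M)\xrightarrow{\zeta}F^K$ in which $\zeta$ inherits a section. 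Only then is $\mathcal G^K(\bigvee S^n)$ decomposed via Theorem \ref{t:SEdec} (Proposition \ref{p:wedgegg}), producing the factor $\mathcal G^{\ell(K)}(S^n)\times\prod^{r-1}\Omega^nG$. Your ``main obstacle'' is therefore not a technical verification to be filled in but a structural misidentification of where the $k_i$'s enter the argument.
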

Theorem \ref{t:UGG} gives a general decomposition of the gauge groups provided that the rank of the Lie group and the dimensions of the spheres satisfy certain conditions. This decomposition shows that the homotopy types of the gauge groups $\mathcal G^K(M)$ depend only on the homotopy types of the gauge groups over $S^n$. Finally, the last part of Theorem \ref{t:UGG} can be regarded as an extension of \cite[Theorem 1.1]{IMS} to connected sums of $S^3$-bundles over $S^4$.

The paper is organised as follows. In section \ref{s:self-eq} we study the ring of power maps defined on $H$-spaces and the group structure of matrix maps. We prove Theorem \ref{t:GLn} and Theorem \ref{t:SEdec}. In section \ref{s:conn-sums} we collect some facts on sphere bundles over spheres and give some results on their suspensions. In section \ref{s:prin-G} we give some results on the classification of isomorphism classes of principal $G$-bundles over connected sums of $S^q$ bundles over $S^n$. In section \ref{s:gauge-groups} we study the homotopy theory of pointed and unpointed gauge groups. We prove Theorem \ref{t:UGG} and give a classification result for $SU(2)$-gauge groups over certain connected sums of $S^3$-bundles over $S^4$. 

\section{Self-equivalences of $H$-spaces} \label{s:self-eq}
In this work we assume that all spaces have the homotopy type of $CW$-complexes with non-degenerate basepoints. A pointed topological space $Y$ is an \emph{$H$-space} if there exists a map $\mu:Y\times Y\to Y$, called a homotopy multiplication, such that if $*:Y\to Y$ is the constant map to the basepoint, then the diagrams 
$$\xymatrix{
Y\ar[r]^-{(\mathbbm{1},*)}\ar[rd]_-{\mathbbm{1}_Y}&Y\times Y\ar[d]^-{\mu}\\
&Y
} \qquad  \qquad \xymatrix{
Y\ar[r]^-{(*,\mathbbm{1})}\ar[rd]_-{\mathbbm{1}_Y}&Y\times Y\ar[d]^-{\mu}\\
&Y
}$$
commute up to homotopy. We write $\mu(x,y):=xy$. An $H$-space is \emph{homotopy associative} with respect to $\mu$ if the maps $\mu\circ(\mu\times\mathbbm 1),\mu\circ(\mathbbm 1\times\mu):Y\times Y\times Y\to Y$ given by $(x,w,z)\mapsto (xw)z$ and $(x,w,z)\mapsto x(wz)$ are homotopic. Let $\Delta:X\to X\times X$ be the diagonal map. A \emph{homotopy inverse} of an $H$-space is a map  $\iota:Y\to Y$ such that the maps $\mu\circ(\iota\times\mathbbm 1)\circ\Delta:X\to X$ and $\mu \circ (\mathbbm 1\times\iota) \circ \Delta:X\to X$ are nullhomotopic. We write $\iota(x)=x^{-1}$. A homotopy associative $H$-space with a homotopy inverse is called an \emph{$H$-group}.  Let $T:Y\times Y\to Y\times Y$ be the map defined by $(x,y)\mapsto (y,x)$. An $H$-space $Y$ is \emph{homotopy commutative} if the maps $\mu\circ T,\mu:Y\times Y\to Y$ are homotopic.

Let $k\in \mathbb Z$ with $k \geq 1$. Given an $H$-group $Y$ with homotopy multiplication $\mu$, the $r$-fold product $Y^r$  has an $H$-group structure with a canonical coordinate-wise multiplication. For an $H$-group $Y$, we define the $k$-fold diagonal map $\Delta^k$ and the $k$-fold multiplication $\mu^k$ inductively:
\begin{equation}
\xymatrix{
\Delta^k: Y \ar[r]^-\Delta & Y \times Y\ar[rr]^-{\Delta^{k-1}\times \mathbbm{1}}&&Y^k,
}
\end{equation}
\begin{equation}
\xymatrix{
\mu^k: Y^k \ar[rr]^-{\mu^{k-1}\times\mathbbm 1} && Y\times Y  \ar[r]^-{\mu}&Y
}
\end{equation}
for $k \geq 2$, and $\Delta^1 = \mu^1 = \mathbbm{1}: Y \to Y$. Observe that $\mu=\mu^2$ and $\Delta=\Delta^2$. 
The $k$-th power map $\mathbbm k:Y\to Y$ is defined as the composite $\mathbbm k = \mu^k \circ \Delta^k$. For the negative number $-k\in\mathbb Z$, the $-k$-th power map $-\mathbbm k:Y\to Y$ is defined by ${-\mathbbm k} = \iota \circ \mu^{k} \circ \Delta^{k}$; here $\iota: Y \to Y$ is the homotopy inverse. For $k=0$, we define the zero power map $0: Y \to Y$ to be trivial map to the basepoint. Let $\mathcal P(Y)\subseteq[Y,Y]$ be the set of homotopy classes of power maps of $Y$.
Given a discrete group $G$, let $o(g)$ be the order of $g\in G$. 

\begin{lemma}\label{l:ring}
Let $Y$ be an $H$-group.  The set $\mathcal P(Y)$ has a ring structure isomorphic to  $\mathbb Z$, if $o(\mathbbm1_Y)=\infty$, or $\mathbb Z_m$, if $o(\mathbbm 1_Y)=m\in\mathbb N$.\qed
\end{lemma}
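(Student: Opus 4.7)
My plan is to realise $\mathcal{P}(Y)$ as the cyclic subgroup of $[Y,Y]$ generated by $\mathbbm{1}_Y$, and then upgrade this to a ring identification by showing that composition of power maps corresponds to integer multiplication.

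First I would recall the standard fact that the $H$-group structure $\mu$ on $Y$ induces a (not necessarily abelian) group structure on $[Y,Y]$ via
\[
[f] + [g] := [\mu \circ (f\times g) \circ \Delta],
\]
with additive identity the class of the constant map $\mathbbm{0}$ and inversion given by postcomposition with $\iota$. The verification of the group axioms uses exactly the homotopy associativity of $\mu$ and the existence of a homotopy inverse, so no homotopy commutativity is required. Next I would verify the key identity
\[
\mathbbm{k} = \underbrace{\mathbbm{1}_Y + \cdots + \mathbbm{1}_Y}_{k\text{ summands}}
\]
in $[Y,Y]$ for $k\geq 1$. This follows by induction on $k$: unfolding the recursive definitions gives $\mathbbm{k+1} = \mu \circ (\mathbbm{k}\times\mathbbm{1}_Y)\circ\Delta = \mathbbm{k} + \mathbbm{1}_Y$. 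For $k=0$ both sides equal the trivial map by definition, and for $k<0$ the map $\mathbbm{-k}=\iota\circ\mathbbm{k}$ is the additive inverse of $\mathbbm{k}$ in the group $[Y,Y]$. Hence, as an additive subgroup of $[Y,Y]$, the set $\mathcal{P}(Y)$ is precisely the cyclic subgroup generated by $\mathbbm{1}_Y$, and so is isomorphic to $\mathbb{Z}$ or to $\mathbb{Z}_m$ according to whether $o(\mathbbm{1}_Y)$ is infinite or finite of order $m$. In particular $\mathcal{P}(Y)$ is closed under addition and inherits an abelian group structure.

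For the multiplicative structure I would use right distributivity of composition over the $H$-sum: for any $f,g,h\colon Y\to Y$,
\[
(f+g)\circ h = \mu\circ(f\times g)\circ\Delta\circ h = \mu\circ(f\circ h\times g\circ h)\circ\Delta = (f\circ h) + (g\circ h),
\]
which holds because $\Delta$ is natural in the source. Combining this with the description $\mathbbm{k} = k\cdot \mathbbm{1}_Y$ gives
\[
\mathbbm{k}\circ\mathbbm{\ell} = (k\cdot\mathbbm{1}_Y)\circ\mathbbm{\ell} = k\cdot(\mathbbm{1}_Y\circ\mathbbm{\ell}) = k\cdot\mathbbm{\ell} = (k\ell)\cdot\mathbbm{1}_Y = \mathbbm{k\ell}.
\]
Thus $\mathcal{P}(Y)$ is closed under composition and the assignment $k\mapsto \mathbbm{k}$ is a ring homomorphism $\mathbb{Z}\to\mathcal{P}(Y)$ whose kernel is exactly $o(\mathbbm{1}_Y)\mathbb{Z}$; the ring isomorphism with $\mathbb{Z}$ or $\mathbb{Z}_m$ follows.

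The main obstacle is the careful bookkeeping in the inductive identification $\mathbbm{k}=k\cdot\mathbbm{1}_Y$, because the iterated maps $\mu^k$ and $\Delta^k$ are defined by a fixed bracketing; one must appeal to homotopy associativity of $\mu$ to reparenthesise sums and compare them with $\mu^k\circ\Delta^k$. Once this combinatorial step is done, the ring identification is essentially formal, and it is worth highlighting that neither the ring structure on $\mathcal{P}(Y)$ nor its isomorphism type requires $Y$ to be homotopy commutative, only the $H$-group hypothesis.
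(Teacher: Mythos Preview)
Your proof is correct and shares the paper's overall strategy: both establish that $k\mapsto\mathbbm{k}$ is a surjective ring homomorphism $\mathbb{Z}\to\mathcal{P}(Y)$ and read off the quotient. The paper verifies additivity and multiplicativity by writing both sides out as iterated products $(\cdots((xx)x)\cdots)x$ and rebracketing via homotopy associativity. Your route is more structural: identifying $\mathcal{P}(Y)$ with the cyclic subgroup $\langle\mathbbm{1}_Y\rangle\leq [Y,Y]$ disposes of the additive part at once, and the right-distributivity identity $(f+g)\circ h=f\circ h+g\circ h$ (from naturality of $\Delta$) yields $\mathbbm{k}\circ\mathbbm{\ell}=\mathbbm{k\ell}$ without any explicit bracket chasing. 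This is a tidier execution of the same idea; note also that with the paper's recursive definitions of $\mu^k$ and $\Delta^k$ the identity $\mathbbm{k+1}=\mathbbm{k}+\mathbbm{1}_Y$ holds on the nose, so the inductive bookkeeping you flag as the main obstacle is in fact lighter than you suggest.
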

\begin{proof}
Clearly the map $\theta:\mathbb Z\to \mathcal P(Y)$ sending $k$ to the class of the $k$-th power map is a surjection. Given two elements $\mathbbm{a,b}\in\mathcal P(Y)$, we define their sum $\mathbbm a+\mathbbm b\in\mathcal P(Y)$ as the composite
\begin{equation}
\xymatrix{
\mathbbm a+\mathbbm b: Y \ar[r]^-\Delta & Y \times Y\ar[r]^-{\mathbbm a\times \mathbbm b} &Y\times Y\ar[r]^-{\mu}&Y.
}
\end{equation}
Composition of power maps, $\mathbbm a\circ \mathbbm b$, defines a second binary operation in $\mathcal P(Y)$. 

  We claim that if $Y$ is an $H$-group then $\theta$ is a ring homomorphism. First suppose that $a,b> 0$. We have 
  \begin{eqnarray*}
  \theta(a+b)(x)&=&\mu^{a+b}\circ\Delta^{a+b}(x)\\
  &=&\mu^{a+b}(x,\dots,x)\\
  &=&\mu(\mu(\cdots\mu(\mu(x,x),x)\cdots ,x),x)\\
  &=&(\cdots((\overbrace{xx)x)\cdots x)x}^{a+b}
    \end{eqnarray*}
and 
 \begin{eqnarray*}
(\theta(a)+\theta(b))(x)&=&\mu\circ(\theta(a)\times\theta(b))\circ\Delta(x)\\
&=&\mu(\mu^{a}\circ\Delta^{a}(x),\mu^{b}\circ\Delta^{b}(x))\\
&=&\mu(\mu^{a}(x,\dots, x),\mu^b(x,\dots,x))\\
&=&(\cdots((\overbrace{xx)x)\cdots x)x}^{a}(\cdots((\overbrace{xx)x)\cdots x)x}^{b}.
   \end{eqnarray*}
Using the homotopy associaivity of $\mu$ we can reorder brackets to show that the map $\theta(a+b)$, defined by $$x\mapsto (\cdots((\overbrace{xx)x)\cdots x)x}^{a+b}$$ is homotopic to the map $\mathbbm a+\mathbbm b$ 
defined by $$x\mapsto (\cdots((\overbrace{xx)x)\cdots x)x}^{a}(\cdots((\overbrace{xx)x)\cdots x)x}^{b}.$$
 In particular, it follows that $\mathbbm a+\mathbbm b = \theta(a+b) = \theta(b+a)=\mathbbm b+\mathbbm a$. 
  Observe that $\theta(0)=0$ represents the class of the constant map $*:Y\to Y$ and $\mathbbm a+0=\mathbbm a$. It is easy to verify that $\theta(a+b)=\theta(a)+\theta(b)$ for any $a,b\in\mathbb Z$ and $\mathbbm a+(-{\mathbbm a})=0$.

Now consider the composition of power maps
\begin{eqnarray*}
  \theta(ab)(x)&=&\mu^{ab}\circ\Delta^{ab}(x)\\
  &=&\mu^{ab}(x,\dots,x)\\
  &=&\mu(\mu(\cdots(\mu(\overbrace{x,x),x)\cdots),x),x}^{ab})\\
    &=&(\cdots((\overbrace{xx)x)\cdots x)x}^{ab}
     \end{eqnarray*}
and
\begin{eqnarray*}
  \mathbbm{a} \circ \mathbbm{b}(x)&=&\mu^a\circ \Delta^{a}\circ\mu^{b}\circ\Delta^{b}(x)\\
  &=&\mu^{a}\circ\Delta^{a}(\mu^b(x,\dots,x))\\
  &=&\mu^{a}(\mu^b(x,\dots,x),\dots,\mu^b(x,\dots,))\\
   &=&[\cdots[(\cdots((\underbrace{\overbrace{xx)x)\cdots x)x}^{b}(\cdots((\overbrace{xx)x)\cdots x)x}^{b}]\cdots](\cdots((\overbrace{xx)x)\cdots x)x}^{b}}_a.
  \end{eqnarray*}
 Using the homotopy associativity of $\mu$ we  reorder brackets to show that $\theta(ab)\simeq \mathbbm{a} \circ \mathbbm{b}$. In particular, it follows that $\mathbbm{a} \circ \mathbbm{b} \simeq \mathbbm{b} \circ \mathbbm{a}$. Thus $\theta$ is a ring homomorphism, as claimed. Finally, note that any ring on which $\mathbb{Z}$ surjects to via a ring homomorphism is isomorphic to either $\mathbb{Z}$ or $\mathbb{Z}_m$ for some $m \in \mathbb{N}$. This shows that $\mathcal P(Y)$ is has a ring structure isomorphic to $\mathbb Z$ or $\mathbb Z_m$. 
 \end{proof}
 
Let $(Y,\mu)$ be a homotopy associative $H$-space. Given a matrix  \begin{equation}
A=(a_{ij})=\begin{pmatrix} 
a & b \\
c & d
\end{pmatrix} \in \mat_2(\mathbb Z),
\end{equation} we define the \textit{matrix map} $\mathcal P_A:Y\times Y\to Y\times Y$ as the composite
\begin{equation}
\xymatrix{
Y\times Y\ar[r]^-{\Delta_{}}&Y\times Y\times Y\times Y\ar[rr]^-{{\mathbbm a}\times \mathbbm b \times{\mathbbm c}\times{\mathbbm d}}&&Y\times Y\times Y\times Y\ar[r]^-{\mu\times\mu}&Y\times Y,
}
\end{equation}
where $\mathbbm {a,b,c,d}\in\mathcal P(Y)$.  Observe 
 that the power maps associated to the coefficients of $A$ can be recovered from $\mathcal P_A$:  $$\mathbbm{a}_{ij}=p_{Y_i}\circ \mathcal P_A\circ i_{Y_j},$$ for $i,j\in\{1,2\},$ and $\mathcal P_A(x,y)=\mu(\mathbbm a(x),\mathbbm b(y))\times\mu(\mathbbm c(x),\mathbbm d(y))$.
 
More generally, let $r\geq 2$. Given a matrix $A\in \mat_r(\mathbb Z)$ we define the map $ \mathcal P_A$  as the composite
\begin{equation}
\xymatrix{
\prod_{j=1}^rY\ar[r]^-{\Delta^r}&\prod_{i=1}^r\prod_{j=1}^rY\ar[rr]^-{\prod_{i=1}^r\prod_{j=1}^r \mathbbm a_{ij}}&&\prod_{i=1}^r \prod_{j=1}^rY\ar[rr]^-{\prod_{i=1}^r\mu^r}&&\prod _{i=1}^rY
}
\end{equation}
Thus for any matrix $A\in\mat_r(\mathbb Z)$ there is a self-map $\mathcal P_A:Y^r\to Y^r$ associated to it. We want to construct self-equivalences out of matrix maps. In order to do so, we  restrict this construction to $GL_r(\mathbb Z)$, the subgroup of $\mat_r(\mathbb Z)$ whose elements are invertible matrices. In this case, we write $\mathcal E_A:=\mathcal P_A$ whenever $A\in GL_r(\mathbb Z)$. For any $r\geq2$, let $\mathcal E_{\mat}(Y^r)$ be the set of homotopy classes of matrix maps $\mathcal E_A$.

In order to give a prove Theorem \ref{t:GLn} we will require a lemma regarding the action of $GL_r(\mathbb Z_m)$ on $\mathbb Z_m^r$.
Let $X$ be a topological space and $f:X\to Y$ be a map. Let $f_i:X\to \prod _{i=1}^r Y$, for $1\leq i\leq r$, be the map such that $$p_j\circ f_i=\begin{cases} f & j=i\\ 0 &j\neq i\end{cases}$$ where $p_j$ is the projection onto the the $j$-th factor. Given $g\in G$, let $o(g)$ be the order of $g$. The \textit{greatest common divisor} of a set $S=\{a_1,\dots,a_r\}$ of integer numbers is denoted $\gcd(a_1,\dots,a_r)$. The greatest common divisor of $x_1,\dots,x_r\in\mathbb Z_m$ is defined by $${\gcd}_m(x_1,\dots,x_r):=\gcd(\tilde x_1,\dots,\tilde x_r,m),$$ where $\tilde x_i$ is a representative of the class of $x_i$. Here $\mathbb Z_0=\mathbb Z$ and $\gcd_0(a_1,\dots,a_r)=\gcd(a_1,\dots,a_r)$ for $a_1,\ldots,a_r \in \mathbb{Z}$.

\begin{lemma}\label{l:actionGLn}
Let $G=\mathbb Z^r_m$, $r\geq 2$ and $m,t\geq 0$. If $m$ divides $t$ then under the canonical action of $GL_r(\mathbb Z_t)$ on $G$, for any $g=(g_1,\dots,g_r)\in G$, the orbit of $g$ is the set $$\mathcal O_g=\{(h_1,\dots,h_r)\in G \mid {\gcd}_m(g_1,\dots,g_r)={\gcd}_m(h_1,\dots,h_r)\}.$$
\end{lemma}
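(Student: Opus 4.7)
My plan splits into two halves: showing that $\gcd_m$ is constant along $GL_r(\mathbb{Z}_t)$-orbits, and showing transitivity within a fixed level set of $\gcd_m$.

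For invariance, given $A = (a_{ij}) \in GL_r(\mathbb{Z}_t)$ with $h = Ag$ in $\mathbb{Z}_m^r$, I would lift the entries to integers $\tilde a_{ij}, \tilde g_j, \tilde h_i$. Since $m \mid t$, reducing the relation $h_i = \sum_j a_{ij}\, g_j$ modulo $m$ gives $\tilde h_i \equiv \sum_j \tilde a_{ij}\, \tilde g_j \pmod m$, so any common divisor of $\tilde g_1,\dots,\tilde g_r, m$ automatically divides every $\tilde h_i$. Hence $\gcd_m(g)\mid \gcd_m(h)$, and applying the same argument with $A^{-1}\in GL_r(\mathbb{Z}_t)$ yields the reverse divisibility.

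For transitivity it suffices by the invariance to show that every $g$ with $\gcd_m(g) = d$ lies in the orbit of the canonical vector $(d,0,\dots,0)$, which has $\gcd_m$ equal to $\gcd(d,m) = d$ since $d\mid m$. The idea is to carry out the argument inside $GL_r(\mathbb{Z})$ and then push into $GL_r(\mathbb{Z}_t)$ via reduction — this is legitimate because every element of $GL_r(\mathbb{Z})$ has determinant $\pm 1$, a unit in $\mathbb{Z}_t$. Concretely, lift $g$ to $\tilde g\in\mathbb{Z}^r$, set $e = \gcd(\tilde g_1,\dots,\tilde g_r)$, and invoke the classical Smith normal form (equivalently, iterated Bezout/Euclidean row reduction) to obtain $A_0\in GL_r(\mathbb{Z})$ with $A_0\tilde g = (e,0,\dots,0)$. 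After reduction, $A_0$ sends $g$ to $(e\bmod m,\,0,\dots,0)$ in $\mathbb{Z}_m^r$. Note that when $m = 0$ one has $e = d$ already, so the argument finishes here.

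The subtle step — and the one I expect to be the main obstacle — is passing from $(e,0,\dots,0)$ to $(d,0,\dots,0)$ modulo $m$, where $d = \gcd(e,m)$. Writing $e = de'$, one has $\gcd(e',m/d) = 1$, but $e'$ need not be a unit modulo $m$, so rescaling the first coordinate alone does not work. My plan is to use a $2\times 2$ block: choose integers $a,k$ with $ae' - k(m/d) = 1$ (possible because $\gcd(e',m/d) = 1$) and let $B\in SL_r(\mathbb{Z})$ be the matrix equal to $\bigl(\begin{smallmatrix} a & k \\ m/d & e' \end{smallmatrix}\bigr)$ on the first two coordinates and the identity on the remaining $r-2$. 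A direct computation then gives $B(e,0,0,\dots,0) \equiv (d + km,\, me',\, 0,\dots,0) \equiv (d,0,0,\dots,0) \pmod m$, so the composition $BA_0$, reduced modulo $t$, yields the required element of $GL_r(\mathbb{Z}_t)$ sending $g$ to $(d,0,\dots,0)$.
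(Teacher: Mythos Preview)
Your argument is correct. The invariance half matches the paper exactly. For transitivity, both you and the paper reduce to showing that any $g$ with $\gcd_m(g)=d$ lies in the orbit of $(d,0,\dots,0)$, and both work inside $GL_r(\mathbb{Z})$ before reducing mod $t$. The execution differs: the paper proceeds by elementary row operations throughout, splitting into the case where at most one coordinate of $g$ is nonzero (handled by one B\'ezout step against $m$) and the case of at least two nonzero coordinates (handled by induction on $\sum_i|\tilde g_i|$, subtracting one row from another). You instead invoke Smith normal form over $\mathbb{Z}$ as a black box to reach $(e,0,\dots,0)$ and then dispatch the passage $e\mapsto d=\gcd(e,m)$ with a single explicit $SL_2(\mathbb{Z})$ block. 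Your route is a little more modular and makes the ``$e$ versus $d$'' issue visible as a separate step; the paper's route is more self-contained and avoids citing Smith normal form. Either way the content is the same Euclidean reduction.
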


\begin{proof}
Given $x\in \mathbb Z_m^r$ we can write $x=(x_1,\dots,x_r)$ with $x_i\in \mathbb Z_m$. If $m$ divides $t$ then for every $x\in \mathbb Z_m$ and $w\in \mathbb Z_t$ the product $w\cdot x \in \mathbb Z_m$ is well defined. Writing $x = (x_1,\ldots,x_r) \in \mathbb{Z}_m^r$ as a column vector, we define the action $\theta: GL_r(\mathbb Z_t)\times G\to G$ by the usual matrix multiplication. We need to show that given two elements $x,y\in G$, there exists an element $A\in GL_r(\mathbb Z_t)$ such that $Ax=y$, if and only if ${\gcd}_m(x_1,\ldots,x_r)={\gcd}_m(y_1,\ldots,y_r)$, where $(x_1,\ldots,x_r)=x$ and $(y_1,\ldots,y_r) = y$. 

The `only if' part is clear. Indeed, if $Ax=y$ then it is clear that ${\gcd}_m(x_1,\ldots,x_r)$ divides $y_i$ for all $i$, and since it also must divide $m$ we know that $\gcd_m(x_1,\ldots,x_r)$ divides ${\gcd}_m(y_1,\ldots,y_r)$. But since $A \in GL_r(\mathbb{Z}_t)$ we also get $A^{-1}y=x$ for a matrix $A^{-1} \in GL_r(\mathbb Z_t)$, and so similarly ${\gcd}_m(y_1,\ldots,y_r)$ divides $\gcd_m(x_1,\ldots,x_r)$. Thus ${\gcd}_m(x_1,\ldots,x_r) = \gcd_m(y_1,\ldots,y_r)$, as required.

Conversely, suppose ${\gcd}_m(x_1,\ldots,x_r) = {\gcd}_m(y_1,\ldots,y_r) = d \in \mathbb{N}$. We first note that it is enough to consider the case where $y$ is equal to $z = (d,0,\ldots,0) \in \mathbb{Z}_m^r$. Indeed, if $A,B \in GL_r(\mathbb{Z}_t)$ are matrices such that $Ax=z$ and $By=z$ then we have $Cx=y$ for $C = B^{-1}A \in GL_r(\mathbb{Z}_t)$. Thus, we may assume without loss of generality that $y = (d,0,\ldots,0)$. We will show that there exists a sequence of elementary row operations that takes the (column) vector $x \in \mathbb{Z}_m^r$ to $y \in \mathbb{Z}_m^r$; this implies the result. 

Suppose first that at most one $x_i$ is non-zero: that is, $x_1 = \cdots = x_{i-1} = x_{i+1} = \cdots = x_r = 0$ (in $\mathbb{Z}_m$) for some $i$. After swapping rows $2$ and $i$ in the column vector $x$ if necessary, we may assume without loss of generality that $i = 2$. Let $\tilde x_2 \in \mathbb{Z}$ be a representative of $x_2 \in \mathbb{Z}_m$. We then have $\gcd(\tilde x_2,m) = \gcd_m(x_1,\ldots,x_r) = d$, and so by B\'ezout's identity there exist $a,b \in \mathbb{Z}$ such that $a\tilde x_2+bm = d$. We may thus add $a \times \text{(the second row)}$ to the first row of $x$, transforming $x$ into $(d,x_2,0,\ldots,0) \in \mathbb{Z}_m^r$. As $\tilde x_2$ is a multiple of $d$, after adding a multiple of the first row to the second one we may transform the latter vector into $y = (d,0,\ldots,0)$, as required.

Finally, suppose that at least two $x_i$ are non-zero. For $1 \leq i \leq r$, let $\tilde x_i \in \mathbb{Z}$ be any representative of $x_i \in \mathbb{Z}_m$, and let $i \neq j$ be such that $0 < |\tilde x_i| \leq |\tilde x_j|$. The result then follows by induction on $N = \sum_{i=1}^r |\tilde x_i|$. Indeed, there exists $\varepsilon \in \{-1,1\}$ such that $|\tilde x_j + \varepsilon \tilde x_i| < |\tilde x_j|$. We may then add (if $\varepsilon = 1$) or subtract (if $\varepsilon = -1$) the $i$-th row from the $j$-th row of $x$; but then $|\tilde x_j + \varepsilon \tilde x_i| + \sum_{k \neq j} |\tilde x_k| < \sum_{k=1}^r |\tilde x_k| = N$, and so we are done by induction.
\end{proof}

We now give a proof of Theorem \ref{t:GLn}.
\begin{proof}[Proof of Theorem \ref{t:GLn}]
Let $Y$ be homotopy commutative $H$-group. By Lemma \ref{l:ring} the map $\theta: \mathbb Z \to \mathcal{P}(Y)$ given by $\theta(a)=\mathbbm a$ is a surjective ring homomorphism. A generating set of the group $GL_r(\mathbb Z_m)$ is given by the matrices $Q$ and $T$ 
\begin{equation}
Q=\begin{pmatrix} 
1 & 0& 0 &\cdots& 0 \\
1 & 1  & 0& \cdots& 0\\
0 & 0  & \ddots& \ddots& \vdots\\
\vdots& &\ddots&1&0\\
0 & 0& \cdots& 0 & 1
\end{pmatrix},\qquad
T=(-1)^{r-1}\begin{pmatrix} 
0 & 0& 0 &\cdots& 1 \\
1 & 0 & 0& \cdots& 0\\
0 & 1  & \ddots& \ddots& \vdots\\
\vdots& &\ddots&0&0\\
0 & 0& \cdots& 1 & 0
\end{pmatrix}.
\end{equation}
Let us look at the case $r=2$. We prove for the case $r=2$ that if $A,A'\in GL_r(\mathbb Z)$ with associated maps $\mathcal {E}_A, \mathcal E_{A'}\in \mathcal E_\mat(Y^r)$ then $\mathcal E_{A'A}\simeq\mathcal E_{A'}\circ\mathcal E_A.$ Since $Q$ and $T$ are generators of $GL_r(\mathbb Z)$ it suffices to show that  for any $A\in GL_2(\mathbb Z)$, $\mathcal E_{QA}\simeq\mathcal E_Q\circ \mathcal E_A$ and $\mathcal E_{TM}\simeq \mathcal E_T\circ\mathcal E_A$. 

Let $n=2$ and $A\in GL_2(\mathbb Z)$. Then $$QA=\begin{pmatrix} 
1 & 0 \\
1 & 1 
\end{pmatrix}\begin{pmatrix} 
a& b \\
c & d
\end{pmatrix}=\begin{pmatrix} 
a & b \\
a+c &b+d
\end{pmatrix}
$$
$$TA=\begin{pmatrix} 
0 & -1 \\
-1 & 0 
\end{pmatrix}\begin{pmatrix} 
a & b \\
c & d
\end{pmatrix}=\begin{pmatrix} 
-c & -d \\
-a &-b
\end{pmatrix}.
$$
First we show that $\mathcal E_{QA}\simeq\mathcal E_Q\circ \mathcal E_A$, that is, the maps defined by $(x,y)\mapsto (x^{a}y^{b},x^{a+c}y^{b+d})$ and $(x,y)\mapsto (x^{a}y^{b},(x^{a}y^{b})(x^cy^d))$ are homotopic. It suffices to show that the maps 
\begin{equation}\label{eq:matmap1}
(x,y)\mapsto x^{a+c}y^{b+d}
\end{equation}
\begin{equation}\label{eq:matmap2}
(x,y)\mapsto (x^{a}y^b)(x^cy^d)
\end{equation}
 are homotopic. Observe that map the map defined in \eqref{eq:matmap2} is the composition \begin{equation}
\xymatrix{
Y\times Y\ar[r]^-{\Delta_{}}&Y\times Y\times Y\times Y\ar[rr]^-{{\mathbbm a}\times \mathbbm b \times{\mathbbm c}\times{\mathbbm d}}&&Y\times Y\times Y\times Y\ar[r]^-{\mu\times\mu}&Y\times Y\ar[r]^-{\mu}&Y,
}
\end{equation} whereas the map \eqref{eq:matmap1} is the composition \begin{equation}
\xymatrix{
Y\times Y\ar[r]^-{\Delta_{}}&\prod_{i=1}^4 Y\ar[rr]^-{{\mathbbm a}\times \mathbbm b \times{\mathbbm c}\times{\mathbbm d}}&&\prod_{i=1}^4Y\ar[rr]^-{\mathbbm1\times T\times\mathbbm 1}&&\prod_{i=1}^4Y\ar[r]^-{\mu\times\mu}&Y\times Y\ar[r]^-{\mu}&Y.
}
\end{equation}
We have to show that the maps $\mu\circ(\mu\times\mu)$ and $\mu\circ(\mu\times\mu)\circ(\mathbbm1\times T\times \mathbbm 1)$ are homotopic. By assumption, $Y$ is a homotopy commutative $H$-group and therefore the following diagram 
\begin{equation}\label{d:Hmap}
\xymatrix{
Y \times Y \times Y \times Y \ar[rrrr]^{\mu \times \mu} \ar[drr]_{\id \times \id \times \mu} \ar[ddr]_{\id \times \mu \times \id} \ar[dddd]_{\id \times T \times \id} &&&& Y \times Y \ar[ddr]^\mu \\
&& Y \times Y \times Y \ar[urr]_{\mu \times \id} \ar[dr]^{\id \times \mu} \\
& Y \times Y \times Y \ar[rr]^{\id \times \mu} && Y \times Y \ar[rr]^\mu && Y \\
&& Y \times Y \times Y \ar[drr]^{\mu \times \id} \ar[ur]_{\id \times \mu} \\
Y \times Y \times Y \times Y \ar[rrrr]^{\mu \times \mu} \ar[urr]^{\id \times \id \times \mu} \ar[uur]^{\id \times \mu \times \id} &&&& Y \times Y \ar[uur]_\mu
}
\end{equation}
homotopy commutes. This shows that $\mu\circ(\mu\times\mu)\simeq\mu\circ(\mu\times\mu)\circ(\mathbbm1\times T\times \mathbbm 1)$. For the general case, it suffices to show that the maps $Y^r \to Y$ defined by $(x_1,\ldots,x_r)\mapsto (x_1^{a_1} \cdots x_r^{a_r})(x_1^{b_1}\cdots x_r^{b_r})$ and $(x_1,\ldots,x_r)\mapsto x_1^{a_1+b_1} \cdots x_r^{a_r+b_r}$ are homotopic; here $(a_1,\dots,a_r)$ and $(b_1,\dots,b_r)$ are the first two row vectors of the matrix $A\in GL_r(\mathbb Z)$. We can construct a diagram similar to \eqref{d:Hmap} and use homotopy commutativity and homotopy associativity of the $H$-group $Y$ to show that this diagram commutes.

To show that $\mathcal E_{TA}\simeq \mathcal E_T\circ\mathcal E_A$, note that the map $\mathcal E_{TA}$ is defined by $(x,y)\mapsto (x^{-c}y^{-d})(x^{-a}y^{-b})$, whereas $\mathcal E_T\circ\mathcal E_A$ is defined by $(x,y)\mapsto ((x^c)^{-1}(y^d)^{-1})((x^a)^{-1}(y^b)^{-1})$. The result follows from the fact that the map $\theta: \mathbb Z \to \mathcal{P}(Y)$, defined by sending $k$ to $k$-th power map, is a ring homomorphism. The general case follows similarly. This proves the first part of the theorem.

To show the second part of the theorem, note that by the first part, every map in $\mathcal{E}_\mat(Y^r)$ is a composite of maps $\mathcal{E}_Q$, $\mathcal{E}_T$ and their inverses. Since composites and inverses of $H$-maps are also $H$-maps, it is therefore enough to show that $\mathcal{E}_Q$ and $\mathcal{E}_T$ are $H$-maps. This follows easily for $\mathcal{E}_T$. For $\mathcal{E}_Q$, note that the maps $\mathcal{E}_Q \circ \overline{\mu}, \overline{\mu} \circ (\mathcal{E}_Q \times \mathcal{E}_Q): Y^r \times Y^r \to Y^r$, where $\overline{\mu}$ is the induced multiplication in $Y^r$, are given by
\[
((x_1,\ldots,x_r),(y_1,\ldots,y_r)) \mapsto (x_1y_1,(x_1y_1)(x_2y_2),x_3y_3,\ldots,x_ry_r)
\]
and
\[
((x_1,\ldots,x_r),(y_1,\ldots,y_r)) \mapsto (x_1y_1,(x_1x_2)(y_1y_2),x_3y_3,\ldots,x_ry_r),
\]
respectively. It is therefore enough to show that the map $(x_1,x_2,y_1,y_2) \mapsto (x_1y_1)(x_2y_2)$ is homotopic to the map $(x_1,x_2,y_1,y_2) \mapsto (x_1x_2)(y_1y_2)$. But this follows from \eqref{d:Hmap}.

For part three of the theorem suppose that $\pi_0(Y)=\mathbb Z_m$ and let $K,K'\in\pi_0(Y^r)=\mathbb Z_m^r$. Suppose that $\gcd_m(k_1,\dots,k_r)=\gcd_m(k_1'\dots,k_r')$, where $(k_1,\dots,k_r) = K$ and $(k_1',\dots,k_r') = K'$. Then, by Lemma \ref{l:actionGLn}, there exists a matrix $A \in GL_r(\mathbb{Z}_t)$, where $t$ is as in the first part of the theorem, such that $AK = K'$. In particular, $\mathcal{E}_A$ maps $(Y^r)_K$ to $(Y^r)_{K'}$. But by the first part of the theorem we have $\mathcal{E}_A \circ \mathcal{E}_{A^{-1}} \simeq \mathcal{E}_{I_n} = \mathbbm{1}_{Y^r} \simeq \mathcal{E}_{A^{-1}} \circ \mathcal{E}_A$, and so $\mathcal{E}_A$ is a homotopy equivalence. It follows that $\mathcal{E}_A: (Y^r)_K \rightarrow (Y^r)_{K'}$, is a homotopy equivalence as required. Conversely, if $\gcd_m(k_1,\dots,k_r) \neq \gcd_m(k_1'\dots,k_r')$ then, by Lemma \ref{l:actionGLn}, there are no invertible matrices $A \in GL_r(\mathbb{Z}_t)$ for which $\mathcal{E}_A$ maps $(Y^r)_K$ to $(Y^r)_{K'}$.
\end{proof}
In 
Let $f:X\to Y$ be a map. For $1\leq i\leq r$, let $f_i:X\to \prod _{i=1}^r Y$ be the map such that $$p_j\circ f_i=\begin{cases} f & j=i\\ 0 &j\neq i\end{cases}$$ where $p_j$ is the projection onto the the $j$-th factor. Given a group $G$ and elements $g_i\in G$, $1\leq i\leq r$, let $\langle g_1,\dots ,g_r\rangle$ be the group generated by those elements. Given a map $f:X\rightarrow Y$, we denote its homotopy class by the same letter $f$. 

\begin{proposition}\label{p:linearaction}
Let $X$ be a pointed connected topological space, $Y$ be a homotopy commutative $H$-group and $f:X\rightarrow Y$ be a map with order $m$. The group $\mathcal E_A(Y^r)$, $r\geq 2$, acts linearly on the set $ \langle f_1,\dots,f_r\rangle \leq [X,Y^r]$. In particular, if $g=\sum_{i=1}^r k_if_i$ and $g'=\sum_{i=1}^r k'_if_i$ are maps such that $\gcd(m,k_1,\dots,k_r)=\gcd(m,k'_1\dots,k_r')$, then there is a homotopy $\mathcal E_A\circ  g\simeq g'$, for some $\mathcal E_A\in\mathcal E_\mat(Y^r)$.
\end{proposition}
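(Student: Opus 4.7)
The plan is to establish that post-composition with elements of $\mathcal E_\mat(Y^r)$ acts on coefficient vectors in $\langle f_1,\ldots,f_r\rangle$ by matrix multiplication, and then to deduce the specific statement from Lemma \ref{l:actionGLn} together with Theorem \ref{t:GLn}(1).

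First I would unwind the definition of $\mathcal E_A\circ g$. An element $g=\sum_{i=1}^r k_if_i$ in $\langle f_1,\ldots,f_r\rangle$ is the map $X\to Y^r$ whose $i$-th coordinate is $k_if$. Chasing through the composite defining $\mathcal E_A$ (the diagonal $\Delta^r$ produces $r$ copies of $g$, one for each output factor), the $i$-th coordinate of $\mathcal E_A\circ g$ is the $\mu^r$-product of the maps $\mathbbm{a}_{ij}\circ (k_jf)$ over $j=1,\ldots,r$. The ring structure on $\mathcal P(Y)$ supplied by Lemma \ref{l:ring} identifies $\mathbbm{a}_{ij}\circ(k_jf)$ with $(a_{ij}k_j)f$, and the homotopy associativity and commutativity of $\mu$ (formalised in diagram \eqref{d:Hmap}) identify the ordered product of these with $\bigl(\sum_j a_{ij}k_j\bigr)f=(AK)_if$. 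Consequently $\mathcal E_A\circ g\simeq\sum_{i=1}^r(AK)_if_i$, which is the claimed linearity of the action.

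For the second assertion, I would note that $f=\mathbbm 1_Y\circ f$ forces $m=o(f)$ to divide $t=o(\mathbbm 1_Y)$ (with $t=0$ imposing no constraint, so that Lemma \ref{l:actionGLn} still applies, with $GL_r(\Z)$ in place of $GL_r(\Z_t)$). By Theorem \ref{t:GLn}(1), $\mathcal E_\mat(Y^r)$ is isomorphic to $GL_r(\Z)$ if $t=0$ and to $GL_r(\Z_t)$ otherwise. Applying Lemma \ref{l:actionGLn} to $\Z_m^r$ under the natural action of $GL_r(\Z_t)$ then yields a matrix $A$ with $AK\equiv K'\pmod m$ whenever $\gcd_m(k_1,\ldots,k_r)=\gcd_m(k'_1,\ldots,k'_r)$. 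Since $o(f)=m$ means that $k_if$ depends on $k_i$ only modulo $m$, the linearity established above gives $\mathcal E_A\circ g\simeq g'$.

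The hardest step will be the first one, namely identifying the action of $\mathcal E_\mat(Y^r)$ by post-composition as matrix multiplication on coefficients. This requires careful coherence arguments invoking the ring structure of $\mathcal P(Y)$ and the homotopy-commutative diagram \eqref{d:Hmap}; these are, however, of the same kind as those already employed in the proof of Theorem \ref{t:GLn}, so no genuinely new homotopy input is needed. The remainder of the argument is then a direct concatenation of Lemma \ref{l:actionGLn} and Theorem \ref{t:GLn}(1).
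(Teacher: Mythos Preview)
Your proposal is correct and follows essentially the same strategy as the paper: first verify that $\mathcal E_A\circ g$ acts on the coefficient vector $K$ by matrix multiplication (the paper does this via an explicit commutative diagram in the case $r=2$, you via the same ingredients from Lemma~\ref{l:ring} and diagram~\eqref{d:Hmap}), and then invoke Lemma~\ref{l:actionGLn} to produce the required $A$. Your remark that $m=o(f)$ divides $t=o(\mathbbm 1_Y)$ is a small extra care the paper leaves implicit (it simply takes $A\in GL_r(\mathbb Z)$, i.e.\ $t=0$), but this does not change the argument.
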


\begin{proof}
We prove the proposition in the case $r=2$; the general case follows similarly. Let $A = (a_{ij}) \in GL_2(\mathbb Z)$ and $g=(k_1f,k_2f)\in\langle f_1,f_2\rangle$.  We show that for any $\mathcal E_A\in \mathcal E_\mat(Y^2)$ 
\begin{equation}\label{e:haGLn}
\mathcal E_A\circ g\simeq (\mathbbm a_{11} k_1f+\mathbbm a_{12}k_2 f,\mathbbm a_{21} k_1f+\mathbbm a_{22}k_2f),
\end{equation}
where $\mathbbm a kf+\mathbbm a' k'f:X\to Y$ is defined as the composite
\begin{equation}
X\xrightarrow{\Delta}X\times X\xrightarrow{\mathbbm a kf\times \mathbbm a' k'f}Y\times Y\xrightarrow{\mu} Y
\end{equation}
That is, the group $\mathcal E_A$ acts on $\langle f_1,f_2\rangle$ in a similar manner as the canonical  action of $GL_2(\mathbb Z)$ on $\mathbb Z_m^2$, $m\geq 0$.

Consider the diagram
\begin{equation}
\xymatrix{
X\ar[rr]^-{\Delta}\ar[d]^-{\Delta}&&X\times X\ar[rrr]^-{k_1f\times k_2f}\ar[d]^-{\Delta}&&&Y\times Y\ar[d]^-{\Delta}\\
X\times X\ar[rr]^-{\Delta\times\Delta}\ar[ddrrrrr]_-{\alpha_1\times\alpha_2}&&\prod_{i=1}^4 X\ar[rrr]^-{\prod_{i=1}^2 (k_1f \times k_2f)}\ar[rrrd]_-{\beta}&&&\prod_{i=1}^4Y\ar[d]^-{\prod_{i=1}^2\prod_{j=1}^2a_{ij}}\\
&&&&&\prod_{i=1}^4Y\ar[d]^-{\mu\times\mu}\\
&&&&&Y\times Y
}
\end{equation}
where $\beta=\prod_{i=1}^2\prod_{j=1}^2 a_{ij} k_j f$ and $\alpha_i = a_{i1} k_1 f + a_{i2} k_2 f$. Observe that  composite in the direction along the upper row and right column in the diagram is the map $\mathcal E_A\circ \gamma$, whereas the composite  in the direction left vertical arrow  is the map $(\mathbbm a_{11} k_1f+\mathbbm a_{12}k_2 f,\mathbbm a_{21} k_1f+\mathbbm a_{22}k_2f)$. The upper squares commute pointwise. The middle triangle homotopy commutes by composition of power maps. The lower quadrilateral  homotopy commutes by looking at the projection at each factor. This shows  that the expression \eqref{e:haGLn} holds.

Now by Lemma \ref{l:actionGLn}, given $K,K'\in\mathbb Z_m^2$ such that $\gcd_m(k_1,k_2)=\gcd_m(k'_1,k_2')$ there is a matrix $A\in GL_2(\mathbb Z)$ such that $K=AK'$. Therefore if $g=\sum_{i=1}^2 k_if_i$ and $g'=\sum_{i=1}^2 k'_if_i$ are maps $ \langle f_1,f_2\rangle\cong\mathbb{Z}_m^r$ such that $\gcd(k_1,k_2,m)=\gcd(k'_1,k_2',m)$ we can construct a self-equivalence $\mathcal E_A$ such that $\mathcal E_A\circ  g\simeq g'$. \end{proof}

Let $F^{\mathbbm kf}$ denote the homotopy fibre of the composite $\mathbbm kf: X\xrightarrow{f} Y\xrightarrow{\mathbbm k}Y$, where $\mathbbm k:Y\to Y$ is the $k$-th power map. In \cite{Th2} it is proved the following result regarding the homotopy types of the fibres $F^{\mathbbm kf}$.

\begin{proposition}[Theriault, {\cite[Lemma 3.1]{Th2}}]\label{c:theriault}
Let $f:X\to Y$ be a map of finite order $m$. If $(m,k)=(m,k')$ then there is a homotopy equivalence $F^{\mathbbm kf}\simeq F^{\mathbbm k' f}$ when localised rationally or at any prime. 
 \end{proposition}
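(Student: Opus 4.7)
The plan is to prove the equivalence one prime at a time (and rationally), exploiting the ring structure of power maps from Lemma \ref{l:ring}. The key fact is that $\mathbbm{k} f$ depends on $k$ only through its image in $\mathcal{P}(Y) \cong \mathbb{Z}/m\mathbb{Z}$, and one expects this ring to $p$-localise to $\mathbb{Z}/p^a\mathbb{Z}$, where $a = v_p(m)$ and $v_p$ denotes the $p$-adic valuation. The argument then reduces to comparing the valuations of $k$ and $k'$ against $a$.

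Rationally, since $m \geq 1$ annihilates $f$, the class $f_{\mathbb{Q}} \in [X_{\mathbb{Q}}, Y_{\mathbb{Q}}]$ is zero (it is torsion in a $\mathbb{Q}$-module), so both $\mathbbm{k} f$ and $\mathbbm{k'} f$ become null after rationalisation. Their homotopy fibres are therefore each equivalent to $X_{\mathbb{Q}} \times \Omega Y_{\mathbb{Q}}$, and in particular to one another.

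For a prime $p$ with $a = v_p(m)$, the hypothesis $\gcd(m,k) = \gcd(m,k')$ forces $\min\{a, v_p(k)\} = \min\{a, v_p(k')\}$. If this common value equals $a$, then $p^a$ divides both $k$ and $k'$; since the order of $f$ after $p$-localisation divides $p^a$ (the prime-to-$p$ part of $m$ being a unit of $\mathbb{Z}_{(p)}$), both composites $\mathbbm{k} f$ and $\mathbbm{k'} f$ become null after $p$-localisation, and both homotopy fibres are equivalent to $X_{(p)} \times \Omega Y_{(p)}$. Otherwise $v_p(k) = v_p(k')$, so $k = u k'$ for some $p$-local unit $u \in \mathbb{Z}_{(p)}^{\times}$; the associated power map $\mathbbm{u} \colon Y_{(p)} \to Y_{(p)}$ is then a self-equivalence of the $p$-local $H$-group, and the relation $(\mathbbm{k} f)_{(p)} \simeq \mathbbm{u} \circ (\mathbbm{k'} f)_{(p)}$ induces a homotopy equivalence between $(F^{\mathbbm{k'} f})_{(p)}$ and $(F^{\mathbbm{k} f})_{(p)}$ by acting on the target.

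The principal technical obstacle will be justifying that the ring $\mathcal{P}(Y)$ of power maps localises correctly: specifically, that any $p$-local unit in $\mathbb{Z}_{(p)}$ defines a self-equivalence of $Y_{(p)}$, so that the extension of the ring homomorphism $\theta$ of Lemma \ref{l:ring} along $\mathbb{Z} \hookrightarrow \mathbb{Z}_{(p)}$ takes units to units. This is a standard statement in localisation theory for $H$-spaces, but it needs to be combined with the observation that homotopy fibres commute with localisation for spaces of finite type. Once these ingredients are in place, the proof concludes by assembling the local pictures, which is precisely the statement of the proposition.
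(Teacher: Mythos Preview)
The paper does not provide its own proof of this proposition: it is stated as a cited result of Theriault \cite[Lemma~3.1]{Th2}, with no argument given. There is therefore no in-paper proof to compare your proposal against.

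Your approach is correct and is essentially the standard one. The case split according to whether $\min\{v_p(m),v_p(k)\}$ equals $v_p(m)$ or is strictly smaller is the natural way to organise the argument, and in the second case the observation that $v_p(k)=v_p(k')$ forces $k=uk'$ for a $p$-local unit $u$, whose associated power map $\mathbbm{u}$ on $Y_{(p)}$ is a self-equivalence, is exactly what is needed to transport one fibre to the other. The technical caveat you flag---that $p$-local units act as self-equivalences on $p$-local $H$-groups, and that homotopy fibres commute with localisation for nilpotent spaces of finite type---is genuine but standard, and acknowledging it is appropriate for a proof sketch at this level.
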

Theorem \ref{t:SEdec} is an integral result motivated by Proposition \ref{c:theriault} in which we use  our construction of $\mathcal E_{\mat}(Y^r)$ for the cases when the $H$-space $Y$ is an $n$-fold product.  

\begin{proof}[Proof of Theorem \ref{t:SEdec}]
 Let $\tilde g=\mathbbm kf_1$. The group $\mathcal E_\mat(Y^r)$ acts on  $\langle f_1,\dots,f_r\rangle$ by composition.
 By Proposition \ref{p:linearaction} this action is linear and therefore there is an element $\mathcal E_A\in\mathcal E_\mat(Y^r)$ such that $g' \simeq \mathcal E_A\circ g$ making the bottom right square in diagram \eqref{d:decomp} commute up to homotopy. 
 
 \begin{equation}\label{d:decomp}
\xymatrix{
F^g\ar@{=}[r]\ar^-{\varphi}[d]&F^{g}\ar[r]\ar[d]&{*}\ar[d]\\
F^{g'}\ar[r]\ar[d]&X\ar[r]^-{g'}\ar[d]^-{g}&Y^r\ar@{=}[d]\\
{*}\ar[r]&Y^r\ar[r]^-{\mathcal E_A}&Y^r
}
\end{equation}
Taking homotopy fibres along each map in the  bottom square generates the whole diagram \eqref{d:decomp}, where each row and column is a homotopy fibration sequence.  Homotopy commutativity of the diagram implies that $\varphi$ is a homotopy equivalence. Identifying $X$ with $X\times *\times\cdots\times*$ is easy to see that there is a homotopy equivalence 
\begin{equation}
F^g\simeq F^{\mathbbm kf}\times\prod_{i=1}^{r-1}\Omega Y,
\end{equation}
as required.
\end{proof}

We recall some general properties of $H$-spaces.  If $Y$ is a homotopy associative $H$-space with a homotopy inverse then there are homotopy equivalences between all path-components of $Y$. In particular, if $Y_0$ is the path-component containing the basepoint of $Y$, there are homotopy equivalences $\Theta_{\alpha}:Y_{\alpha}\to Y_0$ and $\Psi_{\alpha}:Y_0\to Y_\alpha$ such that $\Psi_\alpha\circ\Theta_\alpha\simeq \mathbbm 1_{Y_\alpha}$ and  $\Theta_\alpha\circ\Psi_\alpha\simeq \mathbbm 1_{Y_0}$, for all path-components $Y_\alpha$. To see this, for each $Y_\alpha$, define  maps 
\begin{equation}\label{eq:theta}
\Theta_\alpha= m\circ(\tilde\alpha\times\mathbbm 1)\circ\Delta: Y \to Y
\end{equation}
\begin{equation}\label{eq:psii}
 \Psi_\alpha = m\circ (i(\tilde \alpha)\times\mathbbm 1)\circ\Delta: Y \to Y,
\end{equation}  
 where $\tilde\alpha$ is a fixed element of $Y_\alpha$. Observe that the maps $\Theta_\alpha$ and $\Psi_\alpha$ satisfy $\Theta_\alpha(Y_0) \subseteq Y_\alpha$ and $\Psi_\alpha(Y_\alpha) \subseteq Y_0$. We aim to show that $\Theta_\alpha$ and $\Psi_\alpha$ are homotopy inverses, thus inducing a homotopy equivalence $Y_0 \simeq Y_\alpha$. We show that $\Psi_\alpha\circ \Theta_\alpha \simeq \mathbbm{1}_Y$; the argument for $\Theta_\alpha \circ \Psi_\alpha$ is similar. Consider the homotopy commutative diagram
\begin{equation}
\xymatrix@=0.5in{
Y \ar[r]^-{(\tilde\alpha, \mathbbm{1}_Y)} \ar[d]^-{j_2} & Y \times Y \ar[rr]^-{(i, \mathbbm{1}_Y) \times \mathbbm{1}_Y} \ar[d]^-{* \times \mathbbm{1}_Y} && Y \times Y \times Y \ar[r]^-{\mathbbm{1}_Y \times m} \ar[d]^-{m \times \mathbbm{1}_Y} & Y \times Y \ar[d]^-{m} \\
Y \times Y \ar@{=}[r] & Y \times Y \ar@{=}[rr] && Y \times Y \ar[r]^-{m} & Y,
}
\end{equation}
 where $j_2: Y \to Y \times Y$ is the inclusion into the second factor, given by $j_2(\alpha) = (*,\alpha)$. Here the square on the right homotopy commutes because of homotopy associativity of the map $m$, and the middle one homotopy commutes because of properties of the homotopy inverse $i$. The composite of maps on the top and the right of the diagram is just $\Psi_\alpha \circ \Theta_\alpha = m(i(\tilde\alpha),m(\tilde\alpha,-))$, and the map $m \circ j_2$ along the left and the bottom of the diagram is homotopic to $\mathbbm{1}_Y$. Thus $\Psi_\alpha\circ \Theta_\alpha \simeq \mathbbm{1}_Y$. 

Given a homotopy commutative $H$-group $Y=\bigsqcup_{\alpha}Y_\alpha$, the set of path-components $\pi_0(Y)$ is an abelian group. In this case, homotopy classes of self-equivalences $\Psi_K:Y^r\to Y^r$ and $\Theta_K:Y^r\to Y^r$ are compatible with the elements $\mathcal E_A\in\mathcal E_M(Y^r)$ in the sense of Lemma \ref{l:theta-E_A}. 

\begin{lemma}\label{l:theta-E_A} Let $Y=\bigsqcup_{\alpha\in\pi_0(Y)}Y_\alpha$ be and $H$-group and let $\prod_{i=1}^rY_{\alpha_i}$ and $\prod_{i=1}^rY_{\alpha'_i}$ be path-components of $Y^r$ indexed by $K=(\alpha_1,\dots,\alpha_r),K'=(\alpha'_1,\dots,\alpha'_r)\in \pi_0(Y)^r$. Suppose that $K'=AK$, for some $A\in GL_r(\mathbb Z)$. Then the diagram
\begin{equation}
\xymatrix@=0.4in{
\prod_{i=1}^rY_{\alpha_i}\ar[r]^-{\Theta_K}\ar[d]^{\mathcal E_A}&\prod_{i=1}^rY_{0}\ar[d]^-{\mathcal E_A}\\
\prod_{i=1}^rY_{\alpha'_i}\ar[r]^-{\Theta_{K'}}&\prod_{i=1}^rY_{0}\
}
\end{equation}
commutes up to homotopy.
\end{lemma}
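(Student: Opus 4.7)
The plan is to exploit the $H$-map property of $\mathcal{E}_A$ established in part (2) of Theorem~\ref{t:GLn}, together with the observation that translations by two points lying in the same path-component of an $H$-space yield homotopic maps. To begin, I would rewrite $\Theta_K$ and $\Theta_{K'}$ coordinate-wise as left translations by fixed elements: choose $\beta \in \prod_{i=1}^r Y_{-\alpha_i}$ and $\beta' \in \prod_{i=1}^r Y_{-\alpha'_i}$ so that $\Theta_K$, respectively $\Theta_{K'}$, is the restriction of the map $L_\beta : Y^r \to Y^r$, $y \mapsto \beta \cdot y$, respectively $L_{\beta'}$, to the appropriate path-component. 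By the additivity of indices in $\pi_0(Y^r)$, both $\Theta_K$ and $\Theta_{K'}$ take their values in $\prod_{i=1}^r Y_0$.

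Next, applying the $H$-map property of $\mathcal{E}_A$ to the product $\beta \cdot y$ produces a homotopy
\begin{equation*}
\mathcal{E}_A \circ \Theta_K \simeq L_{\mathcal{E}_A(\beta)} \circ \mathcal{E}_A
\end{equation*}
of maps $\prod_{i=1}^r Y_{\alpha_i} \to \prod_{i=1}^r Y_0$. Since $\mathcal{E}_A$ induces the matrix $A$ on $\pi_0(Y^r) \cong \pi_0(Y)^r$ and $AK = K'$, the point $\mathcal{E}_A(\beta)$ lies in $\prod_{i=1}^r Y_{-\alpha'_i}$, the same path-component as $\beta'$. Fixing a path $\gamma : I \to \prod_{i=1}^r Y_{-\alpha'_i}$ from $\mathcal{E}_A(\beta)$ to $\beta'$, the formula $(y,t) \mapsto \gamma(t) \cdot \mathcal{E}_A(y)$ defines a homotopy, again valued in $\prod_{i=1}^r Y_0$ by component-addition, from $L_{\mathcal{E}_A(\beta)} \circ \mathcal{E}_A$ to $\Theta_{K'} \circ \mathcal{E}_A$. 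Concatenating the two homotopies yields the desired relation $\mathcal{E}_A \circ \Theta_K \simeq \Theta_{K'} \circ \mathcal{E}_A$.

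The main subtlety, rather than a genuine obstacle, is bookkeeping: ensuring that every intermediate map in each homotopy lands in the correct path-component, which follows from the fact that a path starting in a given path-component must remain there. Beyond this, the only inputs needed are Theorem~\ref{t:GLn} and the homotopy associativity and commutativity of the $H$-multiplication already exploited earlier in the section.
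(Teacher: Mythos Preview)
Your proposal is correct and follows essentially the same line as the paper's proof. The paper writes $\Theta_K = m \circ (i(\tilde K)\times\mathbbm 1)\circ\Delta$ and checks compatibility of $\mathcal E_A$ with each factor separately (the diagonal trivially, the multiplication via the $H$-map property from Theorem~\ref{t:GLn}(2), and the constant-map factor by noting that both constants land in the same path-component), whereas you package $\Theta_K$ as a single left translation $L_\beta$ and invoke the $H$-map property once before using a path to adjust the translating point; the underlying ingredients are identical.
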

\begin{proof}
For the $H$-group $Y^r$, let $m:Y^r\times Y^r\to Y^r$ and $i:Y^r\to Y^r$ be the homotopy multiplication and a homotopy inverse, respectively. Consider the following diagram.
\begin{equation}
\xymatrix@=0.5in{
Y^r\ar[r]^-{\Delta}\ar[d]^{\mathcal E_A}&Y^r\times Y^r\ar[r]^{i(\tilde K)\times\mathbbm 1}\ar[d]^{\mathcal E_A\times\mathcal E_A}&Y^r\times Y^r
\ar[r]^-{m}\ar[d]^-{\mathcal E_A\times\mathcal E_A}&Y^r\ar[d]^-{\mathcal E_A}\\
Y^r\ar[r]^-{\Delta}&Y^r\times Y^r\ar[r]^-{i(\tilde K')\times\mathbbm 1}&Y^r\times Y^r\ar[r]^-{m}&Y^r
}
\end{equation}
The left square homotopy commutes by definition of  the diagonal map $\Delta$. To check homotopy commutativity of the middle square it suffices to check homotopy commutativity of the composites on each factor of the product. The projection onto the second factor homotopy commutes since $\mathcal E_A\circ\mathbbm 1\simeq\mathbbm 1\circ\mathcal E_A$.  Now we check homotopy commutativity after projecting onto the first factor. Since $\tilde K$ is a constant, $i(\tilde K)\in (Y^r)_K$ is also a constant. By assumption $K'=AK$, therefore, by the third part of Theorem \ref{t:GLn}  the composite $\mathcal E_A\circ i(\tilde K)$ has image in $\prod_{i=1}^rY_{-\alpha_i'}$. Similarly, since $\tilde K'$ is fixed, the lower composite $i(\tilde K')\circ\mathcal E_A$ is constant and, in particular with image in $\prod_{i=1}^rY_{-\alpha'_i}$. Thus the upper and lower composites in the first factor of the middle square are constant maps with images in the same path-component of $Y^r$. Therefore, the middle  square homotopy commutes. The right square homotopy commutes because by Theorem \ref{t:GLn} the map $\mathcal E_A$ is an $H$-map.
\end{proof}

Let $G$ be a connected simple compact Lie group. Isomorphism classes of principal $G$-bundles over a wedge sum $\bigvee_{i=1}^rS^n_i$ are classified by the homotopy set $[\bigvee_{i=1}^rS^n_i,BG]=\bigoplus_{i=1}^r\pi_{n-1}(G)$. If $\pi_{n-1}(G)=\mathbbm Z$ then we have
\begin{equation}\label{e:prinGwSn} 
 Prin_G(\bigvee_{i=1}^r S^n_i)=[\bigvee_{i=1}^rS^n_i,BG]=\mathbb Z^r.
 \end{equation}
Observe that $\pi_0(\map(\bigvee_{i=1}^rS^n_i,BG))=[\bigvee_{i=1}^rS^n,BG]$. The connected components of  the mapping space $\map(\bigvee^r_{i=1}S^n,BG)$ are classified by $r$-tuples of integers. Let $\map^K(\bigvee^r_{i=1}S^n,BG)$ be the connected component classified by $K=(k_1,\dots,k_r)\in\mathbb Z^r$. The restriction of the evaluation map $ev:\map(\bigvee_{i=1}^rS^n_i,BG)\to BG$, $ev(f)=f(*)$, to the $K$-th component  induces the following homotopy fibration
\begin{equation}\label{fib:evwedge}
\xymatrix{
\mathcal{G}_{K}\left(\bigvee_{i=1}^r S^n_i\right) \ar[r] & G \ar[r]^-{\tilde\partial_K} &\mapp^K(\bigvee^r_{i=1}S^n,BG)\ar[r]& \mathrm{Map}_K(\bigvee_{i=1}^r S^n_i, BG)\ar[r]^-{ev_K}&BG
}
\end{equation}
where we have identified $\mathcal{G}_{K}\left(\bigvee_{i=1}^r S^n_i\right)$ with  $\Omega\mathrm{Map}_K(\bigvee_{i=1}^r S^n_i, BG)$ and $\bar\partial^K$ is the connecting map. 
According to a result of Lang \cite{Lng}, the adjoint of the connecting map in the evaluation  fibration \eqref{fib:evwedge} is a Whitehead product. In the following lemma we state this result in terms of Samelson products. Let $\delta_{ij}=\begin{cases}
1&i=j\\
0&i\neq j 
\end{cases}$, and $\Theta_K$ be the map defined in \eqref{eq:theta}.
 \begin{lemma}\label{l:lang}~
Let $K=(k_1,\dots,k_r)\in\mathbb Z^r$. 
The adjoint $G\wedge\left(\bigvee_{i=1}^rS^{n-1}_i\right)\to G$ of the composite 
 $$\partial_K:G\xrightarrow{\tilde\partial_{K}}{\rm{Map}}^K_*(\bigvee_{i=1}^r S^{n-1}_i,G) \xrightarrow{\Theta_K}{\rm{Map}}^0_*(\bigvee_{i=1}^r S^{n-1}_i,G)\xrightarrow{\cong} \prod_{i=1}^r\Omega_0^{n-1}G$$
 is homotopic to the Samelson product $(\langle\mathbbm{1}_G,\mathbbm k_1\epsilon_1\rangle,\dots\langle\mathbbm{1}_G,\mathbbm k_r\epsilon_r\rangle)$ where $\delta_{ik}\epsilon=\epsilon_i\circ j_k$, $\epsilon$  is a generator of $\pi_{n-1}(G),$ and $j_k$ is the inclusion of $S^{n-1}_k$ into the wedge.
\qed
\end{lemma}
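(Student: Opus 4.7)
The plan is to reduce the claim to the original statement of Lang's theorem \cite{Lng} by exploiting the wedge decomposition of the domain and the classical correspondence between Whitehead products in $BG$ and Samelson products in $G$. Lang's theorem identifies, for any map $\alpha : X \to BG$ classifying a principal $G$-bundle, the adjoint of the connecting map $G \to \mathrm{Map}^\alpha_*(X, BG)$ in the evaluation fibration with a Whitehead product built from $\alpha$ and the fundamental class of $BG$. After translating to the basepoint component via the $H$-space equivalence $\Theta_K$ and then passing to the loop-suspension adjoint, one obtains a description of the form of a Samelson product into $G$.

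First I would apply Lang's theorem with $X = \bigvee_{i=1}^r S^n_i$ and $\alpha$ the classifying map of the bundle, which by \eqref{e:prinGwSn} corresponds to the tuple $K = (k_1, \dots, k_r)$: concretely, on the $i$-th wedge summand the classifying map is $\mathbbm{k}_i \cdot \epsilon : S^n \to BG$, where $\epsilon$ generates $\pi_n(BG) \cong \pi_{n-1}(G)$. Because $\mathrm{Map}^K_*(\bigvee_{i=1}^r S^{n-1}_i, G) \cong \prod_{i=1}^r \mathrm{Map}^{k_i\epsilon}_*(S^{n-1}_i, G)$, both $\tilde\partial_K$ and its basepointed translate $\partial_K$ split as products of their components. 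It therefore suffices to identify the $i$-th component $G \to \Omega_0^{n-1} G$ separately; its loop-suspension adjoint is a map $G \wedge S^{n-1}_i \to G$.

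Next I would apply the standard correspondence between Whitehead and Samelson products: if $f : \Sigma A \to BG$ and $g : \Sigma B \to BG$ have loop-adjoints $\bar f : A \to G$ and $\bar g : B \to G$, then the Whitehead product $[f, g] : \Sigma(A \wedge B) \to BG$ has loop-adjoint the Samelson product $\langle \bar f, \bar g \rangle : A \wedge B \to G$ (up to sign). Applied componentwise to Lang's Whitehead product---taking $\bar f = \mathbbm{1}_G$ (adjoint to the canonical class $\Sigma G \to BG$) and $\bar g$ equal to the restriction of the loop-adjoint of $K$ to the $i$-th summand, namely $\mathbbm{k}_i \epsilon_i \circ j_i : S^{n-1} \to G$---yields the Samelson product $\langle \mathbbm{1}_G, \mathbbm{k}_i \epsilon_i \rangle$, using the defining relation $\epsilon_i \circ j_k = \delta_{ik}\epsilon$ to rewrite the restriction in terms of $\epsilon_i$. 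Assembling the $r$ components produces the tuple in the statement.

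The main obstacle will be the bookkeeping of the various adjunctions and of the effect of the trivialization $\Theta_K$. Specifically, one must verify that translating the $K$-component to the basepoint component by the $H$-group structure on $\mathrm{Map}_*(\bigvee_i S^{n-1}_i, G)$ replaces Lang's Whitehead product by the correct translate, and that this translate is still described by the same Samelson expression up to homotopy. This is essentially the compatibility between $\Theta_K$ and $H$-maps encoded in Lemma \ref{l:theta-E_A} (with $A = \id$), combined with the bilinearity of the Samelson product when one of its arguments is the identity map of the $H$-group $G$, which makes $\langle \mathbbm{1}_G, \mathbbm{k}_i \epsilon_i \rangle$ behave like the $k_i$-th multiple of $\langle \mathbbm{1}_G, \epsilon_i \rangle$ and thereby matches the componentwise output of Lang's formula.
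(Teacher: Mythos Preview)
Your proposal is correct and follows exactly the approach the paper intends: the paper states this lemma without proof (note the \qed\ immediately after the statement), simply attributing it to Lang's result that the adjoint of the connecting map in the evaluation fibration is a Whitehead product, then restating in terms of Samelson products via the standard Whitehead--Samelson correspondence. Your outline supplies the details the paper omits---the componentwise splitting over the wedge, the adjunction bookkeeping, and the role of $\Theta_K$---but the route is the same.
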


Linearity on Samelson products implies  $\langle\mathbbm{1}_G,\Sigma_{i=1}^r\mathbbm k_i\epsilon_i\rangle\simeq (\mathbbm k_1\langle\mathbbm 1_G,\epsilon_1\rangle,\dots,\mathbbm k_r\langle\mathbbm 1_G,\epsilon_r\rangle) $, and therefore we have the following corollary. Let $\partial_K$ be the composite defined in Lemma \ref{l:lang}. 
\begin{cor}\label{cor:lang}
There is a homotopy $\partial_K\simeq(\mathbbm k_1\partial_{1},\dots,\mathbbm k_r\partial_{1}).$\qed
\end{cor}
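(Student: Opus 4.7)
The plan is to apply Lemma~\ref{l:lang} to reduce the claim to a purely algebraic statement about Samelson products, and then to invoke the bilinearity of Samelson products. The fact that this is stated as a corollary (and marked $\qed$) suggests the proof is essentially a coordinate-wise restatement of Lemma~\ref{l:lang} combined with homogeneity of the Samelson product in its second variable.

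First, I would unpack Lemma~\ref{l:lang}: the adjoint of $\partial_K$ is the Samelson product $(\langle \mathbbm{1}_G, \mathbbm k_1 \epsilon_1 \rangle, \ldots, \langle \mathbbm{1}_G, \mathbbm k_r \epsilon_r \rangle)$ defined on $G \wedge (\bigvee_{i=1}^r S^{n-1}_i)$. Since the Samelson product is bilinear and in particular homogeneous in each argument with respect to power maps, we have $\langle \mathbbm{1}_G, \mathbbm k_i \epsilon_i \rangle \simeq \mathbbm k_i \cdot \langle \mathbbm{1}_G, \epsilon_i \rangle$ for each $i$. Thus the adjoint of $\partial_K$ is homotopic, coordinate-wise, to $(\mathbbm k_1 \langle \mathbbm{1}_G, \epsilon_1 \rangle, \ldots, \mathbbm k_r \langle \mathbbm{1}_G, \epsilon_r \rangle)$.

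Second, I would pass back through the adjunction. Applying Lemma~\ref{l:lang} in the special case $r=1$ with $K=1$, the single Samelson product $\langle \mathbbm{1}_G, \epsilon \rangle$ is precisely the adjoint of $\partial_1: G \to \Omega^{n-1}_0 G$. Under the adjunction $[G \wedge S^{n-1}_i, G] \cong [G, \Omega^{n-1}_0 G]$, post-composition with the power map $\mathbbm k_i$ on the target $\Omega^{n-1}_0 G$ corresponds to multiplication by $\mathbbm k_i$ on the second variable of the Samelson product, since $\Omega^{n-1}_0 G$ is an $H$-group whose multiplication is that used to define both sides. Hence the adjoint of $\mathbbm k_i \partial_1$ is $\mathbbm k_i \cdot \langle \mathbbm{1}_G, \epsilon_i \rangle$.

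Assembling these coordinate-wise and un-adjointing gives $\partial_K \simeq (\mathbbm k_1 \partial_1, \ldots, \mathbbm k_r \partial_1)$, as required. The only subtle point is ensuring compatibility of the adjunction with the power maps: once one notes that the loop-space multiplication on $\Omega^{n-1}_0 G$ is the one induced by the $H$-structure of $G$, post-composition by $\mathbbm k_i$ on $\Omega^{n-1}_0 G$ and scalar multiplication by $\mathbbm k_i$ in the second slot of the Samelson product are tautologically identified, so no further work is needed.
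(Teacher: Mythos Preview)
Your proposal is correct and follows essentially the same approach as the paper: the paper's entire proof is the sentence immediately preceding the corollary, which invokes linearity of Samelson products to rewrite the tuple $(\langle\mathbbm{1}_G,\mathbbm k_1\epsilon_1\rangle,\dots,\langle\mathbbm{1}_G,\mathbbm k_r\epsilon_r\rangle)$ as $(\mathbbm k_1\langle\mathbbm 1_G,\epsilon_1\rangle,\dots,\mathbbm k_r\langle\mathbbm 1_G,\epsilon_r\rangle)$, and then the result follows by un-adjointing. You have simply made explicit the passage back through the adjunction and the compatibility of power maps with it, which the paper leaves implicit.
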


The following result states that under mild conditions on $G$ and $X$, there are only finitely many homotopy types associated to the set $Prin_G(X)$.
\begin{theorem}[Crabb-Sutherland {\cite{CS}}]\label{r:cs}
Let $X$ be a connected finite complex and let G be a compact connected Lie group. As $P$ ranges over all principal $G$-bundles over $X$, the number of homotopy types of $\mathcal G^P(X)$ is finite. 
\end{theorem}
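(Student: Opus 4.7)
The plan is to reframe the finiteness question as a statement about mapping space components and then invoke general finiteness for nilpotent mapping spaces. Using the equivalence $B\mathcal G^P(X) \simeq \mathrm{Map}^f(X, BG)$, where $f: X \to BG$ is the classifying map of $P$, one identifies $\mathcal G^P(X) \simeq \Omega \mathrm{Map}^f(X, BG)$. Since homotopy equivalent components of $\mathrm{Map}(X, BG)$ yield homotopy equivalent gauge groups, it suffices to bound the number of homotopy equivalence classes among the path-components of $\mathrm{Map}(X, BG)$.

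First I would pass to the pointed setting via the evaluation fibration $\mathrm{Map}_*(X, BG) \to \mathrm{Map}(X, BG) \to BG$. Since $G$ is connected, $BG$ is simply connected, hence $\pi_0(\mathrm{Map}_*(X, BG)) \cong \pi_0(\mathrm{Map}(X, BG)) = [X, BG]$; moreover each unpointed component is the total space of a fibration over $BG$ with fibre the corresponding pointed component. Thus a bound on the number of homotopy types of pointed components yields a bound on the unpointed side.

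The core step is a finiteness theorem: for $X$ a finite CW complex and $Y$ a nilpotent space of finite type, the set of path-components of $\mathrm{Map}_*(X, Y)$ supports only finitely many distinct homotopy types. I would attack this by arithmetic fracture. Rationally, $H^*(BG; \mathbb{Q})$ is a polynomial algebra on finitely many generators and $H^*(X; \mathbb{Q})$ is finite-dimensional, which bounds the rational homotopy types of the components via Sullivan models. At each prime $p$, finite generation of $\pi_k(BG)$ together with a Postnikov tower argument for $(BG)^{\wedge}_p$ bounds the $p$-complete types of components, since the obstructions to extending a pointed map up the tower lie in finitely generated groups. Assembling these bounds via the arithmetic square for $BG$ then constrains the integral types.

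The main obstacle is the integral assembly step. Even after rational and $p$-complete finiteness have been secured, controlling the \emph{genus} of a nilpotent space of finite type --- that is, how many integral homotopy types can realise prescribed local and rational data --- requires a uniform bound. This would be supplied by the general finiteness of the genus for nilpotent spaces of finite type, together with the observation that the group of self-equivalences of each $(BG)_{(p)}$ acts on the corresponding pointed mapping space component with only finitely many orbits, so that only finitely many fracture diagrams occur up to equivalence. Packaging these inputs gives the desired finiteness for $\mathcal G^P(X)$.
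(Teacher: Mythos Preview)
The paper does not prove this theorem: it is quoted verbatim from Crabb--Sutherland \cite{CS} and used as a black box (in the proof of Proposition~\ref{p:wedgegg}, to know that the connecting map $\partial_1$ has finite order). There is therefore no ``paper's own proof'' to compare your attempt against.

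That said, your outline is broadly in the spirit of what Crabb--Sutherland actually do: identify $B\mathcal G^P(X)$ with a component of $\mathrm{Map}(X,BG)$, observe these are nilpotent of finite type, and control the number of integral homotopy types via localisation and genus arguments. Two points in your sketch are not right, however. First, the reduction from unpointed to pointed components via the evaluation fibration does not work as you state it: even if two pointed components are homotopy equivalent, the corresponding unpointed components are total spaces of possibly inequivalent fibrations over $BG$, so you cannot simply transfer a bound on pointed types to unpointed ones. Crabb--Sutherland work with the unpointed mapping space directly. Second, your ``core step'' and the final assembly paragraph are precisely the substance of the Crabb--Sutherland argument, not something you can invoke off the shelf; in particular, there may be infinitely many components even rationally, and the content is that they realise only finitely many rational types, only finitely many $p$-local types at each prime with all but finitely many primes irrelevant, and that the arithmetic square then assembles these into finitely many integral types. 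Your sketch gestures at this but does not supply it.
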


We can use these results to obtain homotopy decompositions of gauge groups $$\mathcal G^K\left(\bigvee_{i=1}^r S^{n}_i\right)\simeq \Omega\map^K\left(\bigvee_{i=1}^r S^{n}_i,BG\right).$$ associated to the set of principal $G$ bundles given in \eqref{e:prinGwSn}.

If $G$ is a connected Lie group, then $\prod_{i=1}^{r}\Omega^n G$ is a homotopy commutative $H$-group. As the group $\bigoplus_{i=1}^r\pi_n(G)\cong\mathbb Z^r$ is torsion-free, the identity map $\mathbbm 1: \prod_{i=1}^{r}\Omega^n G\to \prod_{i=1}^{r}\Omega^n G$ has infinite order. By Theorem \ref{t:GLn} the subgroup of self-equivalences $\mathcal  E_\mat((\Omega^n G)^r)$ is isomorphic to $GL_r(\mathbb Z)$.

\begin{proposition}\label{p:wedgegg}
Let $K=(k_1,\dots,k_r)\in\mathbb Z^r$. There are homotopy equivalences
\begin{equation}
\mathcal G^{K}\left(\bigvee^r_{i=1}S^n\right)\simeq \mathcal G^{\ell(K)}\left(S^n\right)\times\prod_{i=1}^{r-1}\Omega^n(G),
\end{equation}
where $\ell(K)=gcd(o(\partial_1),k_1,\dots,k_r)$ and $\partial_1$ is the connecting map of the fibration sequence
$$\Omega^n_1BG\longrightarrow{\rm{Map}}^1(S^{n},BG)\xrightarrow{ev} BG.$$
 \end{proposition}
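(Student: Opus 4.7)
The plan is to realise the gauge group as a homotopy fibre, identify that fibre as one of the form appearing in Theorem \ref{t:SEdec}, and then invoke the latter to extract the splitting.

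First I would use the evaluation fibration \eqref{fib:evwedge} to obtain
\[
\mathcal G^{K}\Bigl(\bigvee^r_{i=1}S^n\Bigr) \simeq F^{\tilde\partial_K},
\]
where $\tilde\partial_K\colon G\to\mapp^K(\bigvee_{i=1}^rS^n,BG)$ is the connecting map. The $H$-group equivalence $\Theta_K$ from \eqref{eq:theta} identifies $\mapp^K$ with $\mapp^0\simeq\prod_{i=1}^r\Omega_0^{n-1}G$, so $F^{\tilde\partial_K}\simeq F^{\partial_K}$, where $\partial_K=\Theta_K\circ\tilde\partial_K\colon G\to\prod_{i=1}^r\Omega_0^{n-1}G$. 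Corollary \ref{cor:lang} then gives $\partial_K\simeq(\mathbbm k_1\partial_1,\dots,\mathbbm k_r\partial_1)$, which in the notation preceding Theorem \ref{t:SEdec} is exactly $\sum_{i=1}^rk_i(\partial_1)_i\in\langle(\partial_1)_1,\dots,(\partial_1)_r\rangle$.

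Next I would apply Theorem \ref{t:SEdec} with $X=G$, $Y=\Omega_0^{n-1}G$ (a homotopy commutative $H$-group), $f=\partial_1$, and $g=\partial_K$. With $m=o(\partial_1)$, the gcd produced by the theorem is precisely $\ell(K)=\gcd(o(\partial_1),k_1,\dots,k_r)$, and the theorem then yields
\[
F^{\partial_K}\simeq F^{\mathbbm k\partial_1}\times\prod_{i=1}^{r-1}\Omega Y = F^{\mathbbm k\partial_1}\times\prod_{i=1}^{r-1}\Omega^{n}G,
\]
where $k=\ell(K)$.

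Finally, the very same evaluation fibration / Corollary \ref{cor:lang} reasoning, now applied in the case $r=1$ to the single sphere $S^n$ with bundle classified by $\ell(K)$, identifies $F^{\mathbbm k\partial_1}$ with $\mathcal G^{\ell(K)}(S^n)$; substituting into the display above yields the stated splitting. The main points to watch are that $\Theta_K$ is a genuine homotopy equivalence (which uses that $\mapp^K$ is a component of an $H$-group, so all of its components are equivalent) and that $\Omega_0^{n-1}G$ is homotopy commutative, since the hypotheses of Theorem \ref{t:SEdec} demand this; once these are in place, the argument is an essentially formal assembly of the evaluation fibration, Corollary \ref{cor:lang}, and Theorem \ref{t:SEdec}.
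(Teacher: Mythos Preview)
Your proposal is correct and follows essentially the same route as the paper: identify the gauge group as the fibre of the connecting map, use Corollary~\ref{cor:lang} to write $\partial_K\simeq(\mathbbm k_1\partial_1,\dots,\mathbbm k_r\partial_1)$, and apply Theorem~\ref{t:SEdec} with $Y=\Omega_0^{n-1}G$ to split off the $\Omega^nG$ factors. The paper adds two small points you glide over: it invokes Crabb--Sutherland (Theorem~\ref{r:cs}) to guarantee $o(\partial_1)$ is finite, and it uses Lemma~\ref{l:theta-E_A} to check that the matrix self-equivalence $\mathcal E_A$ is compatible with the component-shifts $\Theta_K$, $\Theta_{\ell(K)}$ (so that the equivalence really matches the fibres over the correct components); both are easy to supply and your outline is otherwise complete.
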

 \begin{proof}
By Corollary \ref{cor:lang} we have $\partial_K\simeq (\mathbbm k_1\partial_1,\dots\mathbbm  k_r\partial_1)$, where $K=(k_1,\dots,k_r)\in\mathbbm Z^r$. Since $\bigvee_{i=1}^r S^n$ is a finite connected $CW$-complex and $G$ is connected, by  Theorem \ref{r:cs}, $o(\partial _1)=m$, for some $m\in\mathbb N$. Let $\partial_{\mathbbm{\ell}(K)}:= (\mathbbm \mathbb \ell,0,\dots,0)$, where  $\ell=\gcd(m,k_1,\dots,k_r)$. Observe that $o(\partial_K)=o(\partial_{\ell(K)})$. Consider the diagram 
\begin{equation}\label{d:connmap_wedge}
\xymatrix{
G\ar[rr]^-{\bar\partial_K}\ar@{=}[d]&&\prod_{i=1}^{r}\Omega^n_{k_i}(G)\ar[rr]^-{\Theta_K}\ar[d]^-{\mathcal E_A}&&\prod_{i=1}^{r}\Omega^n_0(G)\ar[d]^-{\mathcal E_A}\\
G\ar[rr]^-{{\bar\partial_{\ell (K)}}}&&\Omega^n_{\ell}(G) \times \prod_{i=1}^{r-1}\Omega^n_{0}(G)\ar[rr]^-{\Theta_{\ell({K})}}&&\prod_{i=1}^{r}\Omega^n_0(G)
}
\end{equation}
The upper and lower rows in the diagram are the maps $\partial_K$ and $\partial_{\ell( K)}$, respectively. By Theorem \ref{t:SEdec} there is a self-equivalence $\mathcal E_A\in\mathcal E_\mat(\prod_{i=1}^{r}\Omega^n(G))$ making the big rectangle in  \eqref{d:connmap_wedge} homotopy commute. By Lemma \ref{l:theta-E_A} the right square homotopy commutes.  Since the the outer rectangle and the right square homotopy comute and $\Theta_K$ and $\Theta_{\ell(K)}$ are homotopy equivalences, the left square homotopy commutes. Since the conditions of Theorem \ref{t:SEdec} are satisfied, the result follows. 
\end{proof}
  
\section{Connected sums of sphere bundles over spheres} \label{s:conn-sums}

In this section we study the homotopy theory of connected sums of sphere bundles over spheres and their suspension. In order to do so, we will state two useful lemmas. These results will let us find homotopy splittings of suspensions of connected sum.

Let $R$ be a principal ideal domain and $H$ be a finitely generated $R$-module. Then $$H\cong R^{t'}\{\alpha_1,\dots,\alpha_{t'}\}\oplus R/{s_1}R\{\alpha_{t'+1}\}\oplus\cdots\oplus R/s_{t-t'} R\{\alpha_t\},$$
for some $0\leq t'\leq t$, $2\leq s_i$ and $s_j|s_{j+1}$ for all $j=1,\dots,t-t'-1$. Let $\Pi= H^{r}$ and $x\in\Pi$. If $x=(a_{11}\alpha_1+\cdots+a_{1t}\alpha_t,\dots, a_{r1}\alpha_1+\cdots+a_{rt}\alpha_t)$, then we can represent $x$ by an $r\times t$ matrix $A=(a_{ij})$ with entries in $R_j=R$ within the columns $j=1,\dots t'$ and entries in $R_j=R/s_j R$ within the column $j=t+i$, $i=1,\dots, t-t'$. Observe that this representation of elements in $\Pi$ makes $\Pi$ into a $GL_r(R)$-module. 

A matrix $A\in \mat_{r\times t}(R)$ is in a \textit{row echelon form} if it has the following properties:
\begin{enumerate}
\item If a row is non-zero, then the first non-zero entry (from the left) of the row is strictly to the right with respect to the first non-zero entries of the rows above it. 
\item zero rows are at the bottom of the matrix. 
\end{enumerate}

\begin{lemma}\label{l:matrixech}
For any $x\in \Pi$ there is a matrix $D\in GL_r(R)$ such that $DA$ is in a row echelon form, where $A$ is an $r\times t$ matrix representing $x$. 
\end{lemma}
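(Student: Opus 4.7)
The plan is to mimic the Hermite normal form algorithm over a principal ideal domain, adapted to the mixed free-torsion structure of $\Pi$. I will construct $D \in GL_r(R)$ as a product of simpler invertible matrices obtained column by column, working from left to right on $A$.

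First I would describe a single reduction step. Suppose that after some iterations, rows $1,\ldots,k-1$ together with their pivot columns already fit the desired echelon pattern; let $j$ be the leftmost column containing a non-zero entry in rows $k,\ldots,r$. If $j \leq t'$, then the entries $a_{kj},\ldots,a_{rj}$ lie in $R$, and B\'ezout's identity produces $c_k,\ldots,c_r \in R$ with $\sum_{i\geq k} c_i a_{ij} = g$, where $g = \gcd(a_{kj},\ldots,a_{rj})$. The tuple $(c_k,\ldots,c_r)$ is unimodular, since dividing the B\'ezout relation by $g$ yields $\sum_{i\geq k} c_i (a_{ij}/g) = 1$, and over a PID any unimodular row extends to a matrix in $GL_{r-k+1}(R)$. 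Placing this matrix in the lower-right block of an $r \times r$ identity produces a factor $D_1 \in GL_r(R)$ such that $(D_1 A)_{kj} = g$. A sequence of elementary row operations, each subtracting an appropriate multiple of row $k$ from a lower row, then clears all entries below the pivot in column $j$ without disturbing rows above or columns to the left.

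For a torsion column $j > t'$, the entries $a_{ij}$ belong to a quotient $R_j = R/sR$. I would choose lifts $\tilde a_{ij} \in R$ and apply exactly the same B\'ezout-plus-extension procedure in $R$, then view the resulting $GL_r(R)$ operations as acting on the original matrix by reduction modulo $s$ on column $j$ (and modulo the appropriate elements on the other torsion columns). The key observation is that because column $j$ was chosen to contain at least one entry non-zero in $R/sR$, at least one lift $\tilde a_{ij}$ is not divisible by $s$; hence $g = \gcd(\tilde a_{kj},\ldots,\tilde a_{rj})$ is not a multiple of $s$ and its class $\bar g \in R/sR$ is non-zero. Elimination of the entries below this pivot then goes through verbatim, because a row operation defined over $R$ descends consistently to every quotient $R_j$.

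Iterating this subroutine until the remaining submatrix is identically zero produces a matrix $D \in GL_r(R)$, namely the product of the elementary factors constructed at each stage, for which $DA$ satisfies both defining conditions of row echelon form. The principal technical obstacle is handling the torsion columns correctly: one must ensure that the pivot produced by the $R$-level B\'ezout identity does not vanish modulo $s$, which is precisely what the choice of $j$ as the leftmost non-trivial column guarantees. Everything else is a direct transcription of the classical PID row-reduction argument.
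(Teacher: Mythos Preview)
Your proof is correct and follows essentially the same column-by-column reduction scheme as the paper: the paper's one-line argument invokes Lemma~\ref{l:actionGLn} iteratively, which amounts to reducing each successive column to the form $(d,0,\ldots,0)^T$ in the relevant block of rows via elementary operations over $\mathbb{Z}$. Your version is more explicit and slightly more general, since you use B\'ezout together with unimodular-row completion rather than the Euclidean-style step-by-step reduction of Lemma~\ref{l:actionGLn}; this is what makes the argument go through over an arbitrary PID $R$ (which need not be Euclidean), whereas the paper's cited lemma is stated only for $R=\mathbb{Z}$, the case actually needed later in Lemma~\ref{l:attachingcw}.
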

\begin{proof}
The result follows by an iterative application of Lemma \ref{l:actionGLn}.
\end{proof}

\begin{lemma}\label{l:selfeqwedge}
There are self equivalences $$\theta_{D_1,D_2}:\bigvee_{i=1}^{r_1} S^n\vee\bigvee_{i=1}^{r_2} S^q\to \bigvee_{i=1}^{r_1} S^n\vee\bigvee_{i=1}^{r_2} S^q$$ which induce isomorphisms in homology. Moreover, the maps $(\theta_{D_1,D_2)}$ form a group isomorphic to a subgroup of $GL_{r_1+r_1}(\mathbb Z)$ with elements of the form
$$A=\begin{pmatrix} 
D_1 & 0 \\
0 & D_2 
\end{pmatrix}$$
where $D_1\in GL_{r_1}(\mathbb Z)$ and $D_2\in GL_{r_2}(\mathbb Z)$
\end{lemma}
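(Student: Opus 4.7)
The plan is to construct each $\theta_{D_1,D_2}$ as a wedge sum $\phi_{D_1} \vee \phi_{D_2}$ of matrix-type self-maps on the two sub-wedges of constant-dimensional spheres, and then to verify that these maps form a group isomorphic to $GL_{r_1}(\Z) \times GL_{r_2}(\Z)$ under composition.

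First I would construct, for any $r \geq 1$ and any $D = (d_{ij}) \in GL_r(\Z)$, a self-map $\phi_D$ of $W_m := \bigvee_{i=1}^r S^m$ (for $m \geq 2$). Letting $\iota_i: S^m \hookrightarrow W_m$ denote the $i$-th inclusion, the Hurewicz theorem gives $\pi_m(W_m) \cong H_m(W_m;\Z) \cong \Z^r$ with basis $\iota_1,\dots,\iota_r$. I would define $\phi_D \circ \iota_i := \sum_{j=1}^r d_{ji}\iota_j$ using the co-$H$-space addition on $\pi_m(W_m)$ coming from $S^m = \Sigma S^{m-1}$, and extend $\phi_D$ over $W_m$ via the universal property of the wedge. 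By construction $(\phi_D)_*$ acts as $D$ on $H_m(W_m;\Z)$; since $D \in GL_r(\Z)$ and $W_m$ is simply connected, Whitehead's theorem implies that $\phi_D$ is a self-equivalence.

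Next I would set $\theta_{D_1,D_2} := \phi_{D_1} \vee \phi_{D_2}$ on $W := \bigvee_{i=1}^{r_1} S^n \vee \bigvee_{i=1}^{r_2} S^q$. This map induces $D_1 \oplus D_2$ on $H_*(W;\Z)$ and is therefore a homotopy equivalence by the same appeal to Whitehead's theorem.

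To establish the group structure I would verify that $\phi_D \circ \phi_{D'} \simeq \phi_{DD'}$ in the single-dimension case. Since $S^m$ is a co-$H$-space, post-composition with any self-map induces a group homomorphism $(\phi_D)_* : \pi_m(W_m) \to \pi_m(W_m)$, and tracing through the definitions gives
\begin{equation*}
(\phi_D \circ \phi_{D'}) \circ \iota_i \;=\; (\phi_D)_*\!\left(\sum_j d'_{ji}\iota_j\right) \;=\; \sum_{j,k} d'_{ji}\, d_{kj}\,\iota_k \;=\; \sum_k (DD')_{ki}\,\iota_k.
\end{equation*}
Because $\theta_{D_1,D_2}$ preserves each sub-wedge of constant-dimensional spheres, composition acts block-wise and the same identity yields $\theta_{D_1,D_2} \circ \theta_{D'_1,D'_2} \simeq \theta_{D_1 D'_1,\, D_2 D'_2}$. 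Hence $(D_1,D_2) \mapsto \theta_{D_1,D_2}$ is a group homomorphism from $GL_{r_1}(\Z) \times GL_{r_2}(\Z)$ into $\mathcal E(W)$, which is injective because the matrices are recovered from the induced map on homology. Identifying $GL_{r_1}(\Z) \times GL_{r_2}(\Z)$ with its block-diagonal image inside $GL_{r_1+r_2}(\Z)$ completes the proof. The main subtlety is the verification of the composition rule, which relies on the bilinearity of the co-$H$-space addition on $\pi_m(W_m)$ together with the multiplicativity of the degree maps on $S^m$.
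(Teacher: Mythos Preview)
Your proposal is correct and follows essentially the same approach as the paper: both construct matrix-type self-maps on each sub-wedge $\bigvee_{i=1}^r S^m$ using the co-$H$-space structure of the spheres, check that the induced map on homology is the given matrix, and invoke Whitehead's theorem (for $m\geq 2$) to conclude that these maps are homotopy equivalences. The only difference is packaging: the paper writes the map explicitly as a composite $\sigma$--degree--$\nabla$ using the pinch comultiplication and the fold map, whereas you describe the same map by specifying its restrictions $\phi_D\circ\iota_i=\sum_j d_{ji}\iota_j$ in $\pi_m(W_m)$. Your explicit verification of the composition rule $\phi_D\circ\phi_{D'}\simeq\phi_{DD'}$ is in fact slightly more detailed than the paper's argument, which simply asserts that the induced maps on homology form a copy of $GL_r(\Z)$.
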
 
\begin{proof}
Let $n,q\geq 2$. Then $Y=\bigvee_{i=1}^{r_1} S^n\vee\bigvee_{i=1}^{r_2} S^q$ is a homotopy associative homotopy commutative co-$H$-space with comultiplication $\sigma:Y\to Y\vee Y$ given by pinching the equator in $\Sigma (\bigvee_{i=1}^{r_1} S^{n-1}\vee\bigvee_{i=1}^{r_2} S^{q-1})\cong Y$. Given a matrix $D=(d_{ij})\in GL_2(\mathbb Z)$, we define the map $\theta_D:\bigvee_{i=1}^2S^n_i\to\bigvee_{i=1}^2S^n_i$ as the composite 
\begin{equation}\label{map:tdeg}
\bigvee_{i=1}^2S^n_i\xrightarrow \sigma \bigvee_{i=1}^4S^n_{i}\xrightarrow {\bigvee_{i=1}^4 d_{ij}}\bigvee_{j=1}^4S^n_{j}\xrightarrow{\nabla}\bigvee_{j=1}^2S^n_{j}
\end{equation}
where the entries $d_{ij}$ in $D$ define the degree maps $d_{ij}:S^n_{i}\to S_j^n$.   Let $X=\bigvee_{i=1}^d(S^n_i\vee S^q_i)$. It is easy to check that the maps $\theta_D$ induce automorphisms $(\theta_D)_*:H_*(S^k\vee S^k)\to H_*(S^k\vee S^k)$ and the set of induced maps $(\theta_D)_*$ is isomorphic to $ GL_2(\mathbb Z)$.  Therefore, if $k\geq 2$, the maps $\theta_D$ are self-equivalences.

More generally, given any matrix $D\in GL_r(\mathbb Z)$ $(r\geq2)$, we define $\theta_D:\bigvee_{i=1}^rS^n_i\to\bigvee_{i=1}^rS^n_i$ as in \eqref{map:tdeg} such that the set of induced maps $(\theta_D)_*$ is isomorphic to $GL_r(\mathbb Z)$. Observe that we can extend this construction to the wedge sum of spheres in different dimensions $\bigvee_{i=1}^r (S^n_i\vee S^q_i)$. Thus given two matrices ${D_1}, {D_2}\in GL_r(\mathbb Z)$ defining homotopy equivalences $\theta_{D_1}:\bigvee_{i=1}^r S^n_i\to \bigvee_{i=1}^r S^n_i$ and $\theta_{D_2}:\bigvee_{i=1}^r S^q_i\to \bigvee_{i=1}^r S^q_i$ there is a map $\theta_{D_1,D_2}:\bigvee_{i=1}^r (S^n_i\vee S^q_i)\to \bigvee_{i=1}^r (S^n_i\vee S^q_i)$, and the set of induced maps $(\theta_{D_1,D_2})_*: H_k(\bigvee_{i=1}^r (S^n_i\vee S^q_i))\to H_k(\bigvee_{i=1}^r (S^n_i\vee S^q_i))$ is isomorphic to $GL_r(\mathbb Z)$ for $k=n,q$.
\end{proof}

Now we will analyse the homotopy type of sphere bundles over spheres and their suspensions. Let $X$ be the total space of a $q$-sphere bundle over an $n$-sphere $S^q\to X\xrightarrow{\pi} S^n$  $(n,q\geq2)$ which admit a cross sections. There is a fibration sequence
\begin{equation}
\xymatrix{
SO(q)\ar[r]^-j&SO(q+1)\ar[r]^-p&S^q.
}
\end{equation}
and a diagram of homotopy groups
\begin{equation}\label{d:spherefib}
\xymatrix{
\pi_n(S^q)\ar[r]^-\partial\ar[rd]_-{w}&\pi_{n-1}(SO(q))\ar[r]^-{j_{*}}\ar[d]^-J&\pi_{n-1}(SO(q+1))\ar[d]^-J\\
&\pi_{n+q-1}(S^q)\ar[r]^-E&\pi_{n+q}(S^{q+1})
}
\end{equation}
where $E$ is the suspension homomorphism, $w(\alpha)=[\alpha,\iota_q]$ and $J$ is the $J$-homomorphism, which commutes up to sign \cite{JW1}. That is, $w=-J\circ\partial$ and $E\circ J=-J\circ j_*$. Let $\chi(X)\in\pi_{n-1}(SO(q+1))$ be the characteristic element of the sphere bundle $X$. Since $X$ has cross sections then $\chi(X)=j_*\xi$ for some  $\xi\in\pi_{n-1}(SO(q))$.
The attaching map $\varphi$ of $X$ is given by 
\begin{equation}
\varphi=\iota_q\circ\eta+[\iota_n,\iota_q]
\end{equation}
where $\eta=J(\xi)$ and $\iota_q,\iota_n$ are generators of $\pi_q(S^q)$ and $\pi_n(S^n)$, respectively. 

\begin{lemma}
Let $S^q\to X\xrightarrow{\pi} S^n$ be a $q$-sphere bundle over an $n$-sphere $(n,q>1)$ which admits a cross section and let $\chi(X)=i_*\xi$, $\xi\in\pi_{n-1}(SO(q))$, be its characteristic map. There is a homotopy equivalence
$$\Sigma X\simeq S^{n+1}\vee \Sigma Y,$$
where $Y$ is the homotopy cofibre of the map $\iota_q\circ\eta\in\pi_{n+q}(S^q)$.
\end{lemma}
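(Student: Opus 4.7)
The plan is to exploit the standard CW structure on $X$ arising from the fact that $X$ admits a cross-section. Because $\chi(X) = j_*\xi$ lifts to $SO(q)$, the bundle splits after one cell, so $X$ has a CW decomposition
\[
X \simeq (S^q \vee S^n) \cup_\varphi e^{n+q},
\]
with attaching map $\varphi = \iota_q \circ \eta + [\iota_n,\iota_q] \in \pi_{n+q-1}(S^q \vee S^n)$ as already recorded in the excerpt. Suspending a CW structure commutes with attaching cells, so
\[
\Sigma X \simeq (S^{q+1} \vee S^{n+1}) \cup_{\Sigma \varphi} e^{n+q+1}.
\]

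The first key step is to simplify $\Sigma \varphi$. Suspension kills Whitehead products: the composite
\[
S^{n+q-1} \xrightarrow{[\iota_n,\iota_q]} S^q \vee S^n \hookrightarrow S^q \times S^n
\]
is null-homotopic, and after one suspension the inclusion $\Sigma(S^q \vee S^n) \hookrightarrow \Sigma(S^q \times S^n)$ admits a retraction, forcing $\Sigma[\iota_n,\iota_q] \simeq *$. Combined with the fact that suspension is a homomorphism on homotopy groups, we obtain
\[
\Sigma \varphi \simeq \Sigma(\iota_q \circ \eta) = \iota_{q+1} \circ \Sigma \eta : S^{n+q} \to S^{q+1}.
\]

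The second step is to observe that this suspended attaching map factors through the first wedge summand $S^{q+1}$ of $S^{q+1} \vee S^{n+1}$. Consequently the $S^{n+1}$-summand is untouched by the attachment of the top cell, which allows us to split it off: the inclusion $S^{n+1} \hookrightarrow \Sigma X$ together with the induced map $S^{q+1} \cup_{\iota_{q+1}\circ\Sigma\eta} e^{n+q+1} \hookrightarrow \Sigma X$ assembles into a map
\[
S^{n+1} \vee \bigl(S^{q+1} \cup_{\iota_{q+1}\circ\Sigma\eta} e^{n+q+1}\bigr) \longrightarrow \Sigma X.
\]
Since both sides have the same CW structure with matching attaching data, this map is a homotopy equivalence. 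By definition of $Y$ as the homotopy cofibre of $\iota_q \circ \eta$, the second wedge summand is precisely $\Sigma Y$, which completes the argument.

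The only genuine subtlety is the vanishing of the suspended Whitehead product; everything else is bookkeeping with the CW structure. One can either appeal to the classical fact $\Sigma[\alpha,\beta] \simeq *$ directly, or, if one prefers to keep the argument self-contained, use that $\Sigma(A \vee B) \simeq \Sigma A \vee \Sigma B$ is a retract of $\Sigma(A \times B)$ to deduce it.
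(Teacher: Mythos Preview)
Your proof is correct. Both arguments start from the CW decomposition $X \simeq (S^q \vee S^n) \cup_\varphi e^{n+q}$ with $\varphi = \iota_q\circ\eta + [\iota_n,\iota_q]$, but they organise the splitting differently. You suspend immediately and use $\Sigma[\iota_n,\iota_q]\simeq *$ to see that $\Sigma\varphi$ factors through the $S^{q+1}$ summand, so $S^{n+1}$ splits off for free. The paper instead stays unsuspended and builds a $3\times 3$ diagram of cofibrations: pinching $S^n\vee S^q$ onto $S^q$ kills the Whitehead product and produces a cofibre sequence $S^n\xrightarrow{s} X\to Y$ in which $s$ is the cross-section; since $s$ has the bundle projection $\pi$ as a left homotopy inverse, this sequence splits after one suspension. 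Your route is slightly more hands-on and makes the vanishing of the suspended Whitehead product the visible mechanism; the paper's route makes it more transparent that the cross-section hypothesis is precisely what furnishes the retraction driving the splitting.
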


\begin{proof}
The attaching map of the top cell induces the following homotopy commutative diagram of cofibrations
\begin{equation}
\xymatrix{
{*}\ar[r]\ar[d]&S^q\ar@{=}[r]\ar[d]&S^q\ar[d]^-{s}\\
S^{n+q-1}\ar[r]^-{\varphi}\ar@{=}[d]&S^n\vee S^q\ar[r]\ar[d]^-{pinch}& X\ar[d]^-{c}\\
S^{n+q-1}\ar[r]^-{\iota_q\circ \eta}&S^n\ar[r]^-{}& Y
}
\end{equation}
which defines the map $c$ and the space $Y$. Observe that $s$ has a section. Thus after suspending we obtain the result.  
\end{proof}
Let $D^k$ be a $k$-disc and $\mathrm{Int}D^k$ be its interior. Given two oreintable $k$-manifolds $M_1$ and $M_2$ we can construct its connected sum $M_1\sharp M_2$ by taking $M_1-\mathrm{Int} D^k$ and $M_1-\mathrm{Int} D^k$ and identifying its boundaries. There are homotopy equivalences $M_1^{k-1}\simeq M_1-\mathrm{Int} D^k$ and $M_2^{k-1}\simeq M_2-\mathrm{Int} D^k$.The manifold $M_1\sharp M_2$ $$M_1\sharp M_2\simeq (M^{k-1}_1\vee M^{k-1}_2)\cup_f D^k,$$
where $f$ is the attaching map of the top cell. 
Let $A_i$, $0\leq i\leq r$, be total spaces of $n$-sphere bundles over $q$-spheres which admit cross sections.  Let $M=\sharp_{i=1}^r A_i$ be the connected sum $A_1\sharp\cdots\sharp A_r$ of the manifolds $A_i$. In \cite{Ish} Ishimoto classified connected sums of sphere bunlndles over spheres up to homotopy by studying the attaching maps of the top cell.

\begin{theorem}\cite[Lemma 1.1]{Ish}\label{ish}
Let $M_i$, $i=1,\dots,r$ be the total spaces of $S^q$-bundles over $S^n$ $(n,q\geq2)$ with characteristic elements $\chi(A_i)=i_*\xi^i$ for some given $\xi^i\in\pi_{n-1}(SO(q))$, $i=1,\dots,r$. Then $\sharp_{i=1}^rM_i$ has the homotopy type of $\{\bigvee_{i=1}^r(S_i^q\vee S^n_i)\}\cup_\varphi D^{q+n},$
and the homotopy class of the attaching map $\varphi$ is given by $$\varphi=\sum_{i=1}^r(\bar\eta^i+[\iota_n^i,\iota_q^i]),$$
where $\iota_q^i,\iota_q^n$ are orientation generators of $\pi_n(S^q_i)$, $\pi_n(S_i^n)$ respectively, $\bar\eta^i$ is given by the composite $ S^{n+q-1}\xrightarrow{J\xi^i} S^q\hookrightarrow S^n_i\vee S^q_i,$ $i=1,\dots,r$ and $\pi_{n+q-1}(S^q_i\vee S^n_i)$, $i=1,\dots,d$, are considered direct summands of $\pi_{n+q-1}(\vee_{i=1}^r(S^q_i\vee S^n_i))$.
\end{theorem}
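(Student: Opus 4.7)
The plan is to iterate the CW decomposition of a single sphere bundle (derived just above) and carefully track how the attaching map of the top cell combines under connected sum. First, the preceding lemma gives, for each $M_i$, a CW structure $M_i \simeq (S^n_i \vee S^q_i) \cup_{\varphi^i} D^{n+q}$ with $\varphi^i = \iota^i_q \circ \eta^i + [\iota^i_n, \iota^i_q]$ and $\eta^i = J(\xi^i)$. Under the inclusion $S^q_i \hookrightarrow S^n_i \vee S^q_i$, the term $\iota^i_q \circ \eta^i$ agrees with $\bar\eta^i$ as defined in the statement, so $\varphi^i = \bar\eta^i + [\iota^i_n, \iota^i_q]$ as an element of $\pi_{n+q-1}(S^n_i \vee S^q_i)$.

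Next, I would induct on $r$ using the general fact recalled just before the theorem: for oriented $k$-manifolds $N_1, N_2$, one has $N_1 \sharp N_2 \simeq (N_1^{k-1} \vee N_2^{k-1}) \cup_f D^k$. Taking $k = n+q$ and observing that $M_i^{n+q-1} \simeq S^n_i \vee S^q_i$ from the decomposition above, an induction immediately yields
\[
\sharp_{i=1}^r M_i \simeq \Bigl(\bigvee_{i=1}^r (S^n_i \vee S^q_i)\Bigr) \cup_\varphi D^{n+q}
\]
for some $\varphi \in \pi_{n+q-1}\bigl(\bigvee_{i=1}^r (S^n_i \vee S^q_i)\bigr)$. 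The task reduces to identifying $\varphi$ within the direct summand $\bigoplus_{i=1}^r \pi_{n+q-1}(S^n_i \vee S^q_i)$ coming from the inclusions of each wedge factor.

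The core computation, and also the main obstacle, is to verify that $\varphi = \sum_{i=1}^r \varphi^i$ under this direct-sum decomposition. Geometrically, the connected sum is obtained by excising small concentric open discs from the interior of each top cell of $M_i$ and gluing along the boundary spheres. Post-composing $\varphi$ with the pinch map $\bigvee_{i=1}^r (S^n_i \vee S^q_i) \to S^n_j \vee S^q_j$ should recover $\varphi^j$, since this pinch is the identity on the $j$-th wedge summand and collapses the others, effectively "filling in" all excised discs except the $j$-th one and returning the original attaching map of $M_j$. Running this over all $j$ determines $\varphi$ as $\sum_i \varphi^i = \sum_i (\bar\eta^i + [\iota^i_n, \iota^i_q])$. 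The delicate part is the pinch argument; I would make it rigorous by a standard neighbourhood-deformation, first reducing to two summands via the identity $\sharp_{i=1}^r M_i \simeq (\sharp_{i=1}^{r-1} M_i)\, \sharp\, M_r$, and then inducting on $r$.
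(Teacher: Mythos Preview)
The paper does not prove this theorem at all: it is quoted verbatim as \cite[Lemma 1.1]{Ish} and used as a black box, with no proof or sketch supplied. So there is no ``paper's own proof'' to compare against.

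That said, your outline is the standard route to Ishimoto's lemma and is essentially correct. The single-summand CW structure $M_i \simeq (S^n_i \vee S^q_i) \cup_{\varphi^i} D^{n+q}$ with $\varphi^i = \bar\eta^i + [\iota^i_n,\iota^i_q]$ is exactly what the paper records just before the theorem, and the general connected-sum description $N_1 \sharp N_2 \simeq (N_1^{k-1} \vee N_2^{k-1}) \cup_f D^k$ is also recalled there. Your identification of $\varphi$ via the pinch maps onto each summand is the right mechanism: the attaching map of a connected sum of two manifolds is, up to sign conventions, the sum (via the pinch $S^{k-1} \to S^{k-1} \vee S^{k-1}$) of the two individual attaching maps pushed into the wedge; iterating gives $\varphi = \sum_i \varphi^i$. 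The only point worth flagging is that ``$\varphi$ lies in the direct summand $\bigoplus_i \pi_{n+q-1}(S^n_i \vee S^q_i)$'' is not automatic from the Hilton--Milnor decomposition---there are cross Whitehead products between spheres in different summands---but your pinch argument does exactly the work of showing no such cross terms appear, since each $\varphi^i$ lives entirely in its own wedge factor. With that caveat made explicit, the argument goes through.
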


Let $R_j^1=\bigoplus_{i=1}^r\pi_{n+q}(S^{n+1})$ and $R_j^2=\bigoplus_{i=1}^r\pi_{n+q}(S^{q+1})$ with minimal generating sets $\mathcal S_1$ and $\mathcal S_2$, respectively.  Let $t_1=|\mathcal S_1|$ and Let $t_2=|\mathcal S_2|$
\begin{lemma}\label{l:attachingcw}
 Let $X$ be a simply connected $CW$-complex such that $X=\left(\bigvee_{i=1}^r (S^n_i\vee S^q_i)\right)\cup D^{n+q}.$ There is a homotopy equivalence $\Sigma X\xrightarrow\simeq C$, where $C$ is the homotopy cofibre of a map $$g:S^{n+q}\to \Sigma X^{n+q-1}$$ represented by a matrix $$B_g=\begin{pmatrix} 
B_1 & 0 \\
0 & B_2
\end{pmatrix}.$$ Here $B_l$, $l=1,2$, are $r\times t_l$ matrices with row entries $b_{ij}$ given by the elements $\sum_{j=1}^{t_l}b_{ij}^1 \in R_j^1$ and $\sum_{j=1}^{t_2}b_{ij}^2\in R_j^2$, and each $B_l$ is in a row echelon form. \end{lemma}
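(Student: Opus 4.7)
The plan is to identify $\Sigma X$ as a mapping cone with a single top cell, to show that its attaching map is ``block-diagonal'' for purely homotopy-theoretic reasons, and then to use the self-equivalences from Lemma \ref{l:selfeqwedge} to put the two blocks simultaneously into row echelon form. Since $X = \bigl(\bigvee_{i=1}^r(S^n_i \vee S^q_i)\bigr) \cup_\varphi D^{n+q}$, suspending gives $\Sigma X \simeq W \cup_{\Sigma \varphi} D^{n+q+1}$ where $W := \bigvee_{i=1}^r (S^{n+1}_i \vee S^{q+1}_i)$ is precisely $\Sigma X^{n+q-1}$, and the whole lemma reduces to analysing the class $g_0 := \Sigma \varphi \in \pi_{n+q}(W)$.

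The key homotopical step is to show that $g_0$ lies in the block-diagonal direct summand $R^1 \oplus R^2$ of $\pi_{n+q}(W)$, i.e.\ has no iterated Whitehead-bracket contributions. I would apply Hilton's theorem in $Y := \bigvee_{i=1}^r(S^n_i \vee S^q_i)$ to write $\varphi = \sum_\omega e_\omega \circ \gamma_\omega$, where $\omega$ ranges over basic products, $e_\omega$ is the corresponding iterated Whitehead bracket of the wedge-summand inclusions, and $\gamma_\omega$ lies in a homotopy group of a sphere. Then $\Sigma g_0 = \sum_\omega \Sigma e_\omega \circ \Sigma \gamma_\omega$; the classical identity $\Sigma[\alpha,\beta] \simeq \ast$ kills every length-$\geq 2$ term, while the length-one terms suspend to compositions of sphere-homotopy classes with the canonical inclusions $S^{n+1}_i, S^{q+1}_i \hookrightarrow W$. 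Projecting these onto the two blocks of wedge summands and expanding in the chosen minimal generating sets of $R^1$ and $R^2$ produces matrices $B_1^0$ and $B_2^0$ and assembles them into a block-diagonal $B_{g_0}$ with zero off-diagonal blocks by construction.

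Finally, I would use Lemma \ref{l:matrixech} to choose $D_1, D_2 \in GL_r(\mathbb Z)$ with $D_1 B_1^0$ and $D_2 B_2^0$ both in row echelon form, and Lemma \ref{l:selfeqwedge} to realise the pair $(D_1, D_2)$ as a self-homotopy-equivalence $\theta_{D_1, D_2}$ of $W$. Setting $g := \theta_{D_1,D_2} \circ g_0$ and invoking functoriality of the mapping cone, since precomposing an attaching map by a self-equivalence of the base produces a homotopy equivalent cofibre, gives the desired $\Sigma X \simeq C$ with $B_g$ block-diagonal in row echelon form. I expect the main technical hurdle to be the Hilton-theorem analysis in the middle step: one has to verify both that every Whitehead bracket on the inclusions of $Y$ suspends to the null map, and that no length-$\geq 2$ basic products in $\Sigma Y$ itself can secretly contribute to $g_0$, which for the latter amounts to checking that the corresponding bracket dimensions $2n+1, n+q+1, 2q+1, \ldots$ all lie strictly above $n+q$.
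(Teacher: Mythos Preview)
Your approach is correct and essentially identical to the paper's: suspend the attaching map, use Hilton--Milnor together with $\Sigma[\alpha,\beta]\simeq *$ to see that $g_0=\Sigma\varphi$ has no Whitehead-product components, then apply Lemmas \ref{l:matrixech} and \ref{l:selfeqwedge} and compare cofibres (the paper does the last step via the five-lemma on homology rather than abstract functoriality, but it is the same).

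One remark on your closing worry: the dimension check on basic products in $\Sigma Y$ is unnecessary, and in fact the inequality $2q+1>n+q$ fails whenever $q<n$. Your first argument already suffices: decompose $\varphi$ via Hilton in $Y$ \emph{before} suspending, so every weight-$\geq 2$ summand factors through an iterated Whitehead bracket $e_\omega$, and then $\Sigma e_\omega\simeq *$ kills those terms in $g_0$. This shows $g_0\in R^1\oplus R^2$ regardless of what other summands $\pi_{n+q}(\Sigma Y)$ might possess, so you need not control the latter.
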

\begin{proof}
Let $f:S^{n+q-1}\to X^{n+q-1}=\bigvee_{i=1}^r (S^n_i\vee S^q_i)$ be the attaching map of the top dimensional cell of $X$. After suspension we have that $\Sigma X^{n+q-1}= \bigvee_{i=1}^r (S^{n+1}_i\vee S^{q+1}_i)$ and  $\Sigma f\in\pi_{n+q}(\Sigma X^{n+q-1})$. By the Hilton-Milnor theorem it follows that $\pi_{n+q}( \bigvee_{i=1}^r (S^{n+1}_i\vee S^{q+1}_i))$ is a finitely generated abelian group. In particular we have that 
$$\pi_{n+q}(\Sigma X^{n+q-1})\cong\bigoplus_{i=1}^r\pi_{n+q}(S^{n+1})\oplus\bigoplus_{i=1}^r\pi_{n+q}(S^{q+1})\oplus WP,$$
where $WP$ is a subgroup generated by Whitehead products. Since Whitehead products vanish after suspension we have that $p_{WP}\circ\Sigma f\simeq *$ and $p\circ\Sigma f=\Sigma f_1+\Sigma f_2 \in R_1\oplus R_2\leq \pi_{n+q}(\Sigma X^{n+q-1})$, $R_1\cong\bigoplus_{i=1}^r\pi_{n+q}(S^{n+1})$, $R_2\cong\bigoplus_{i=1}^r\pi_{n+q}(S^{q+1})$. Here $p_{WP}$ and $p$ are the projections onto $WP$ and $R_1\oplus R_2$, respectively, and the groups $R_l$, $l=1,2$, are generated by the maps  $$S^{n+q}\xrightarrow{\alpha_{ij}^l} S^{k}_i\hookrightarrow S^{n+1}_i\vee S^{q+1}_i{},$$ where $i=1,\dots,r$, $j=1,\dots, t_l$ and $t_1=|\mathcal S_{n+1}|_{min}$, $t_2=|\mathcal S_{q+1}|_{min}$, and $|\mathcal S_k|_{min}$ is the minimal cardinality of a generating set of $\pi_{n+q}(S^k)$, $k=n+1,q+1$.

Since $G_1$ and $G_2$ can be regarded as $\mathbb Z$-modules, we can represent the maps $p\circ\Sigma f_l$, $l=1,2$,   as $r\times t_l$ matrices $A_l=(a_{ij}^l)$ with entries in $R_j^l=\mathbb Z$ within the columns $j=1,\dots t'_l$ and entries in the ring $R_j^l=\mathbb Z/s_j^l \mathbb Z$ within the column $j=t_l+i$, $i=1,\dots, t_l-t'_l$, $t_l'\in\mathbb N$. Thus we can regard $p\circ\Sigma f$ as a matrix 
\begin{equation}\label{eq:matrixA}
A=\begin{pmatrix} 
A_1 & 0 \\
0 & A_2 
\end{pmatrix}.
\end{equation} By Lemma \ref{l:matrixech} there are matrices $D_1,D_2\in GL_r(\mathbb Z)$ such that $B_1=D_1A_1$ and $B_2=D_2A_2$ are in a row echelon form.   Let $g\in G_1\oplus G_2$ be the map represented by a matrix $B$ obtained by replacing $A_l$ by $B_l$, $l=1,2$, in \eqref{eq:matrixA}. Since $D_1,D_2\in GL_r(\mathbb Z)$, by Lemma \ref{l:selfeqwedge} we 
can construct the self-equivalences $\theta_{D_1,D_2}:\bigvee_{i=1}^r (S^{n+1}_i\vee S^{q+1}_i)\to \bigvee_{i=1}^r (S^{n+1}_i\vee S^{q+1}_i)$ which induces isomorphisms in homology.  We obtain a homotopy commutative diagram 

\begin{equation}
\xymatrix{
S^{n+q}\ar[r]^-{\Sigma f}\ar@{=}[d]&\Sigma X^{n+q-1}\ar[r]^-a\ar[d]^{\theta_{D_1,D_2}}&\Sigma X\ar[d]^-{h}\\
S^{n+q}\ar[r]^-{g}&\Sigma X^{n+q-1}\ar[r]^-{a'}&C
}
\end{equation}
which defines the map $h$.  By the 5-lemma, $h$ induces isomorphisms in homology. Since all spaces are simply connected $CW$-complexes $h$ is a homotopy equivalence.

\end{proof}

Let $M=\sharp _{i=1}^r M_i$, where each summand $M_i$ is an $S^q$-bundle over $S^n$ with cross sections. Then by Lemma \ref{ish} the manifold $M$ has a cellular structure given by $(\bigvee^r_{i=1}S^q_i\vee\bigvee^r_{i=1}S^n_i)\cup_\Phi e^{n+q}$. Let $p: M \to \bigvee_{i=1}^d S_i^n$ be the composite
\begin{equation}
p: M \xrightarrow{\mathrm{pinch}} \bigvee^r_{i=1} M_{i} \xrightarrow{\bigvee^r_{i=1} \pi_i} \bigvee^r_{i=1} S_i^n,
\end{equation}
where the maps $\pi_i: M_{i} \to S_i^n$, $1 \leq i\leq r$, are bundle projections. 

For all $ 1\leq i\leq r$, let $s_i:S^n_i\to M$ be a cross section of the bundle $M_i\to S^n_i$ and let $g:\bigvee^r_{i=1} S_i^q\vee\bigvee^r_{i=1} S_i^n  \rightarrow  M$ be defined so that $S^q_i\hookrightarrow \bigvee^r_{i=1} S_i^q\vee\bigvee^r_{i=1} S_i^n  \xrightarrow g M$ is the inclusion of $S^q_i$ and $S^n_i\hookrightarrow \bigvee^r_{i=1} S_i^q\vee\bigvee^r_{i=1} S_i^n  \xrightarrow g M$ is the map $s_i$.
Consider the diagram
\begin{equation} \label{e:pipinch}
\xymatrix{
\bigvee^r_{i=1} S_i^n \ar@{=}[rrr] \ar@{^{(}->}[d] & &&\bigvee^r_{i=1} S_i^n \\
\bigvee^r_{i=1} S_i^q\vee\bigvee^r_{i=1} S_i^n  \ar[rrr]^-{\bigvee_{i=1}^rj_i\vee\bigvee _{i=1}^rs_i} \ar[d]^-g&&& \bigvee^r_{i=1} A_{i} \ar[u]_-{\bigvee^r_{i=1} \pi_i} \\
M \ar@{=}[rrr] &&& M. \ar[u]_-{\text{pinch}}
}
\end{equation}
where $j_i:S^q_i\to M_i$ is the inclusion. The bottom square homotopy commutes by definition of the map $g$ and the $CW$-structure of $M$, and the top square homotopy commutes since the composite
\begin{equation*}
 S_i^n \hookrightarrow  S_i^q \vee S_i^n  \xrightarrow {j_i\vee s_i} M_{i} \xrightarrow{\pi_i} S^n
\end{equation*}
is homotopic to the identity map. 

Given a family of $q$-sphere bundles over $n$-spheres $\mathcal F=\{A_i\}_{i=1}^r$ with characteristic maps $\chi(A_i)=i_*\xi^i$, $\xi^i\in\pi_q(SO(q))$, let $N_{\mathcal F}$ be the matrix formed with vector rows given by the elements $E J(\xi^{i})\in \pi_{n+q}(S^{q+1})$ and let $\bar  N_{\mathcal F}$ be its row echelon form. Let $rk(N_{\mathcal F})$ be the number of non-zero rows of $\bar N_{\mathcal F}$.

\begin{proposition}\label{p:Msuspension} 
Let $\mathcal F=\{M_i\}_{i=1}^r$ $(r\geq2)$ be a family of $q$-sphere bundles over $n$-spheres $(n,q\geq2)$ with characteristic maps $\chi(M_i)=i_*\xi^i$, for some $\xi^i\in\pi_q(SO(q))$ and let $M=\sharp_{i=1}^r M_i$ be a connected sum. Then there is a homotopy equivalence
 $$\Sigma M\simeq \bigvee_{i=1}^rS^{n+1}_i\vee \bigvee_{j=1}^{r-\bar t}S^{q+1}_{j}\vee \Sigma Y_{\mathcal F},$$
where $\bar t=min\{r,rk (\bar N_{\mathcal F})\}$  and $Y_{\mathcal F}$ is the cofibre of a map $$S^{n+q-1}\xrightarrow{B_{\mathcal F}}\bigvee^r_{i=1}S^q_i\xrightarrow{pinch}\bigvee_{i=1}^{\bar t}S^q_i.$$
 
\end{proposition}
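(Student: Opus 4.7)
The plan is to combine Ishimoto's cellular description of $M$ (Theorem \ref{ish}) with Lemma \ref{l:attachingcw} to simplify the attaching map after suspension, then split off the obvious wedge summands and identify the remainder as $\Sigma Y_{\mathcal F}$ by constructing $B_{\mathcal F}$ from a desuspension of the row-reducing self-equivalence.

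First I would invoke Theorem \ref{ish}: $M \simeq (\bigvee_{i=1}^r(S^n_i \vee S^q_i)) \cup_\varphi D^{n+q}$ with $\varphi = \sum_{i=1}^r(\bar\eta^i + [\iota^i_n, \iota^i_q])$. Suspending kills the Whitehead products, so $\Sigma\varphi = \sum_{i=1}^r E\bar\eta^i$ has image entirely in $\bigoplus_{i=1}^r \pi_{n+q}(S^{q+1}_i)$, with $i$-th coordinate $EJ(\xi^i)$. In the notation of Lemma \ref{l:attachingcw} the block $B_1$ is therefore zero, while the second block is the matrix $N_{\mathcal F}$, which is brought into row echelon form $\bar N_{\mathcal F}$ via a matrix $D_2 \in GL_r(\mathbb Z)$ (Lemma \ref{l:matrixech}) and the induced self-equivalence of $\bigvee_{i=1}^r S^{q+1}_i$ from Lemma \ref{l:selfeqwedge}. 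Together these yield
$$
\Sigma M \simeq \Bigl(\bigvee_{i=1}^r (S^{n+1}_i \vee S^{q+1}_i)\Bigr) \cup_{g} D^{n+q+1},
$$
with $g$ represented by the block matrix $\left(\begin{smallmatrix} 0 & 0 \\ 0 & \bar N_{\mathcal F} \end{smallmatrix}\right)$.

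Next I would extract the wedge summands. Because $g$ has no components in any $S^{n+1}_i$, and because $\bar N_{\mathcal F}$ is in row echelon form with exactly $\bar t = \mathrm{rk}(\bar N_{\mathcal F})$ non-zero rows at the top and $r - \bar t$ zero rows at the bottom, the summands $S^{n+1}_1,\ldots,S^{n+1}_r$ and $S^{q+1}_{\bar t+1},\ldots,S^{q+1}_r$ all split off as wedge factors, leaving
$$
\Sigma M \simeq \bigvee_{i=1}^r S^{n+1}_i \vee \bigvee_{j=1}^{r-\bar t} S^{q+1}_j \vee Z,
$$
where $Z := (\bigvee_{i=1}^{\bar t} S^{q+1}_i) \cup_{g'} D^{n+q+1}$ and $g'$ has matrix given by the first $\bar t$ non-zero rows of $\bar N_{\mathcal F}$.

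Finally, I would identify $Z$ with $\Sigma Y_{\mathcal F}$ by exhibiting $B_{\mathcal F}$. The self-equivalence $\theta_{D_2}$ produced in Lemma \ref{l:selfeqwedge} is built from the suspension comultiplication, integer degree maps and the fold map on $\bigvee_{i=1}^r S^{q+1}_i$; since $q \geq 2$ each of these admits a canonical desuspension, so $\theta_{D_2} = E\theta'_{D_2}$ for a self-equivalence $\theta'_{D_2}$ of $\bigvee_{i=1}^r S^q_i$. I then set
$$
B_{\mathcal F} := \theta'_{D_2} \circ \Bigl(\sum_{i=1}^r \bar\eta^i\Bigr) : S^{n+q-1} \to \bigvee_{i=1}^r S^q_i.
$$
By construction $EB_{\mathcal F}$ is represented by $D_2 N_{\mathcal F} = \bar N_{\mathcal F}$, and post-composing with the pinch onto $\bigvee_{i=1}^{\bar t} S^q_i$ (which kills the summands indexed by the zero rows) yields, after suspension, a map homotopic to $g'$. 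This forces $Z \simeq \Sigma Y_{\mathcal F}$ and completes the decomposition. The delicate step is verifying that the left action of $\theta_{D_2}$ on the Hilton--Milnor coordinates of $\Sigma\varphi$ corresponds exactly to left multiplication by $D_2$ of the matrix $N_{\mathcal F}$; this is the piece that ties the combinatorial row reduction faithfully to the topological splitting, and amounts to unwinding the pinch-degree-fold description of $\theta_{D_2}$ on each basis element.
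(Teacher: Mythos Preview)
Your proposal is correct and follows essentially the same route as the paper: suspend the Ishimoto attaching map, kill the Whitehead products, and invoke Lemma~\ref{l:attachingcw} to row-reduce. The only cosmetic difference is that the paper first peels off $\bigvee_{i=1}^r S^{n+1}_i$ via the section argument (the left inverse coming from the bundle projections $\pi_i$) and then row-reduces the remaining piece, whereas you observe directly that the $S^{n+1}$ block $A_1$ of the suspended attaching map vanishes, so those summands split off for free; the two arguments are equivalent. Your explicit desuspension of $\theta_{D_2}$ to manufacture $B_{\mathcal F}$ on $\bigvee_{i=1}^r S^q_i$ is actually more careful than the paper's own proof, which leaves the identification of the leftover cofibre with the \emph{unsuspended} $Y_{\mathcal F}$ stated in the proposition essentially implicit.
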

\begin{proof}
The suspension of the attaching map of the top cell in $M$ induces a diagram of cofibrations 
\begin{equation}
\xymatrix{
{*}\ar[r]\ar[d]&\bigvee_{i=1}^r S^{n+1}_i\ar@{=}[r]\ar[d]&\bigvee_{i=1}^r S^{n+1}_i\ar[d]^-{s}\\
S^{n+q}\ar[r]^-{\varphi}\ar@{=}[d]&\bigvee_{i=1}^r(S_i^{q+1}\vee S^{n+1}_i)\ar[r]\ar[d]^-{pinch}& \Sigma M\ar[d]^-{c}\\
S^{n+q}\ar[r]^-{g}&\bigvee^{r}_{i=1}S^{q+1}_{i}\ar[r]^-{}&C
}
\end{equation}
which homotopy commutes. The map $\Sigma s$ has a left homotopy inverse, namely, the composite
\begin{equation}\label{eq:Proj}
 \Sigma M \xrightarrow{\mathrm{pinch}} \bigvee^r_{i=1} \Sigma M_{i} \xrightarrow{\bigvee^r_{i=1} \pi_i} \bigvee^r_{i=1} S_i^{n+1}.
\end{equation}
Therefore $\Sigma M\simeq \bigvee_{i=1}^r S^{n+1}_i\vee C$. By Lemma \ref{l:attachingcw} there is a cofibration sequence $$S^{n+q}\xrightarrow{B_{\mathcal F}} \bigvee_{i=1}^{r} S^{q+1}_i\to \bigvee^{r-\bar t}_{i=1}S^{q+1}_i\vee\Sigma Y_{\mathcal F},$$
and a homotopy equivalence $C\simeq  \bigvee^{r-\bar t}_{i=1}S^{q+1}_i\vee\Sigma Y_{\mathcal F}$.

\end{proof}
  
\section{Principal $G$-bundles over $M$} \label{s:prin-G}

Let $X$ and $Y$ be $CW$-complexes and let Map$(X,Y)$ and Map$_*(X,Y)$ be the mapping spaces of unpointed and pointed maps from $X$ to $Y,$ respectively, endowed with the compact-open topology. We denote the connected components containing a map $f$ by Map$^f(X,Y)$ and Map$^f_*(X,Y)$. Let $[X,Y]$ and $[X,Y]_*$ be the sets of homotopy classes of unpointed and pointed maps from $X$ to $Y$, respectively. It is a standard result that the isomorphism classes of  principal $G$-bundles over a CW-complex $X$ are in one-to-one correspondence with $[X,BG]$. Therefore we compute the sets $[M,BG]$ for manifolds $M$ that are connected sums of  $n$-sphere bundles over a $q$-spheres. From the evaluation fibration
\begin{equation*}
{\rm{Map}}_*(M,BG)\rightarrow{\rm{Map}}(M,BG)\xrightarrow{ev}BG
\end{equation*}
we obtain an exact sequence of homotopy sets
$$\pi_1(BG)\xrightarrow{\partial} [M,BG]_*\to[M,BG]\xrightarrow{ev^*}\pi_0(BG).$$
The map $ev^*$ is trivial since $BG$ is connected. Since all the groups $G$ considered in this work are simply connected, $\partial=0$ and the action of $\pi_1(BG)$ on $[M,BG]_*$ is trivial, which implies that there is a bijection between $[M,BG]$ and  $[M,BG]_*$. 

\begin{lemma}\label{l:classext}

Let $r \geq 2$. Let $\mathcal F=\{M_i\}^r_{i=1}$ be a family of $S^q$-bundles over $S^n$ with classifying maps $\chi(M_i)=j_*\xi_i$, $\xi_i\in\pi_{n-1}(SO(q))$ and let $M=\sharp_{i=1}^rM_i$ be a  connected sum. Suppose that $G$ is one of the following groups: 
\begin{enumerate}
\item $SU(m)$, $2m\geq n+q$;
\item $Sp(m)$, $4m\geq n+q-2$.
\end{enumerate} 
Then $$[M,BG]=\bigoplus_{i=1}^r\{\pi_{n-1}(G)\}_i\oplus\bigoplus_{j=1}^{r-rk(B)}\{\pi_{q-1}(G)\}_j\oplus[Y_{\mathcal F}, BG],$$
where $Y_\mathcal F$ is the homotopy cofibre of a map $\bigvee_{i=1}^r S^{n+1}_i\vee \bigvee_{j=1}^{r-\bar t}S^{q+1}_j\hookrightarrow \Sigma M$, for some $\bar t\in\mathbb N$.
\end{lemma}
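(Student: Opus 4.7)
The plan is to combine Proposition \ref{p:Msuspension} with the K-theoretic representability of $[-,BG]$ in the dimensional range dictated by the hypotheses. By the evaluation-fibration argument in the preamble to the lemma, $[M,BG] = [M,BG]_*$, so I may work throughout with pointed homotopy classes.

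First, I would invoke the classical stable-range fact that the inclusions $SU(m) \hookrightarrow SU$ and $Sp(m) \hookrightarrow Sp$ induce bijections
\begin{equation*}
[X, BSU(m)]_* \cong \widetilde{KU}^{0}(X), \qquad [X, BSp(m)]_* \cong \widetilde{KSp}^{0}(X),
\end{equation*}
for every finite CW complex $X$ of dimension at most $n+q$. The conditions $2m \geq n+q$ for $SU(m)$ and $4m \geq n+q-2$ for $Sp(m)$ are exactly what is needed to place us in this range, via the known connectivities of $BSU(m) \to BSU$ and $BSp(m) \to BSp$ computed from the relevant Stiefel-type fibres. A key observation is that the space $Y_{\mathcal F}$ of Proposition \ref{p:Msuspension}, being the mapping cone of a map $S^{n+q-1} \to \bigvee_{i=1}^{\bar t} S^q_i$, also has dimension at most $n+q$; hence the same identification applies to $Y_{\mathcal F}$ as to $M$.

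Next, Proposition \ref{p:Msuspension} supplies the stable splitting
\begin{equation*}
\Sigma M \simeq \bigvee_{i=1}^{r} S^{n+1}_i \vee \bigvee_{j=1}^{r-\bar t} S^{q+1}_j \vee \Sigma Y_{\mathcal F}.
\end{equation*}
Since $\widetilde{KU}$ and $\widetilde{KSp}$ are reduced cohomology theories that turn wedges into direct sums and satisfy Bott periodicity $\widetilde{KU}(\Sigma^{2} X) \cong \widetilde{KU}(X)$ (with the $8$-fold analogue for $\widetilde{KSp}$), suspending the splitting once more and applying periodicity yields
\begin{equation*}
\widetilde{KU}(M) \cong \bigoplus_{i=1}^{r} \widetilde{KU}(S^n) \oplus \bigoplus_{j=1}^{r-\bar t} \widetilde{KU}(S^q) \oplus \widetilde{KU}(Y_{\mathcal F})
\end{equation*}
in the $SU(m)$ case, with the analogous formula for $\widetilde{KSp}$ in the $Sp(m)$ case.

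Finally, I would re-interpret each summand using $\widetilde{KU}(S^k) \cong \pi_k(BU) \cong \pi_{k-1}(U) \cong \pi_{k-1}(SU(m))$ in the stable range, and $\widetilde{KU}(Y_{\mathcal F}) \cong [Y_{\mathcal F}, BSU(m)]$ by the same range argument; substituting these identifications recovers the direct-sum decomposition asserted in the lemma. The main difficulty I foresee is the index bookkeeping: the suspension shifts introduced by passing to $\Sigma M$ and then descending via Bott must correctly produce $\pi_{n-1}(G)$ and $\pi_{q-1}(G)$ rather than $\pi_n(G)$ and $\pi_q(G)$, and this match is precisely where the stable-range hypothesis carries its weight.
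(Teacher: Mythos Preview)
Your proof is correct and takes essentially the same approach as the paper, which reduces to the stable range where $[M,BG] = [M,BG(\infty)]$, uses that $BG(\infty)$ is an infinite loop space to pass to $\Sigma M$, and then applies the wedge splitting of Proposition~\ref{p:Msuspension}. Your explicit use of reduced $K$-theory and Bott periodicity is just a concrete rephrasing of the paper's infinite-loop-space argument, and your check that $Y_{\mathcal F}$ also has dimension at most $n+q$ (so the stable-range identification applies to it as well) is a useful detail the paper leaves implicit.
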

\begin{proof}
If $G=SU(r)$ with $2r> n+q$ then $[M,BSU(r)]=[M, BSU(\infty)]$. Similarly, If $G=Sp(r)$ with $4r> n+q-2$ then $[M,BSp(r)]=[M, BSp(\infty)]$. Since $BSU(\infty)$ and $BSU(\infty)$ are infinite loop spaces, we can suspend $M$ and use Proposition \ref{p:Msuspension} to obtain the result.
\end{proof}

Proposition \ref{l:classext} is a general result that allows to classify principal $G$-bundles over connected sums $M$. We will restrict from now on to the case of odd-dimensional connected sums of $S^q$-bundles over $S^n$ for which $n=2k$ and $q=2k'-1$ for some $2\leq k'\leq k$. The following technical result will be required in the computation of $[M,BG]$. Let  \begin{equation}\label{c:s3inc}
\xymatrix{
\bigvee^r_{i=1}S^q_i\ar[r]^-{j}& M\ar[r]^-{q}& W.
}
\end{equation} 
be the cofibration sequence associated to the inclusion of the $q$-skeleton of $M$.

\begin{lemma}\label{quotient}
Let $M=\sharp _{i=1}^r M_i$, where each summand $M_i$ is an $S^q$-bundle over $S^n$ and $2\leq q<n$. There is a homotopy equivalence $$W\simeq (\bigvee^r_{i=1}S^{n}_i)\vee S^{n+q}$$ and the homotopy equivalence can be chosen so that the composite $$M\xrightarrow{q} W\xrightarrow{\simeq}(\bigvee^r_{i=1}S^{n}_i)\vee S^{n+q}\xrightarrow{pinch}\bigvee^r_{i=1} S^n_i$$
is homotopic to the map $p$.
\end{lemma}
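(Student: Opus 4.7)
The plan is to first determine the cellular structure of $W$ by applying Theorem~\ref{ish}, which presents $M$ as $\bigl(\bigvee_{i=1}^r (S^n_i \vee S^q_i)\bigr) \cup_\varphi D^{n+q}$ with $\varphi = \sum_{i=1}^r(\bar\eta^i + [\iota^i_n, \iota^i_q])$. Since $W$ is obtained by collapsing the $q$-skeleton $\bigvee_{i=1}^r S^q_i$, it inherits the cell structure $\bigl(\bigvee_{i=1}^r S^n_i\bigr) \cup_{\bar\varphi} D^{n+q}$, where $\bar\varphi$ is the composite of $\varphi$ with the projection onto the $n$-factors. First I would argue that $\bar\varphi$ is null-homotopic: each summand $\bar\eta^i$ factors through $S^q_i$ by definition and is therefore killed by the projection, while each Whitehead product becomes $[\iota^i_n, 0] = 0$ by bilinearity. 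This yields the homotopy equivalence $W \simeq \bigl(\bigvee_{i=1}^r S^n_i\bigr) \vee S^{n+q}$.

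For the second half of the statement, I would next observe that the composite $p\circ j \colon \bigvee_{i=1}^r S^q_i \to \bigvee_{i=1}^r S^n_i$ is null-homotopic, since each $\pi_i$ collapses its fibre $S^q_i$. The cofibration sequence \eqref{c:s3inc} then produces a factorisation $p \simeq \tilde p \circ q$ for some $\tilde p \colon W \to \bigvee_{i=1}^r S^n_i$. Because each section $s_i$ satisfies $\pi_i \circ s_i \simeq \mathbbm 1$, the restriction of $\tilde p$ to the $n$-skeleton is homotopic to the identity, so $\tilde p$ is a homotopy retraction of the inclusion $\iota \colon \bigvee_{i=1}^r S^n_i \hookrightarrow W$.

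The main obstacle, and the crux of the argument, is to choose the homotopy equivalence $W \simeq \bigl(\bigvee_{i=1}^r S^n_i\bigr) \vee S^{n+q}$ so that $\tilde p$ corresponds precisely to the pinch onto the first factor. A generic splitting built from an arbitrary null-homotopy of $\bar\varphi$ would only guarantee that $\tilde p$ agrees with the pinch on the $n$-skeleton; on the top cell the two composites could differ by some element $\alpha \in \pi_{n+q}(\bigvee_{i=1}^r S^n_i)$. To handle this, I would use the coaction $c \colon W \to W \vee S^{n+q}$ of the cofibre sequence $\bigvee_{i=1}^r S^n_i \hookrightarrow W \to S^{n+q}$ to form the composite
\begin{equation*}
\Phi \colon W \xrightarrow{c} W \vee S^{n+q} \xrightarrow{\tilde p \vee \mathbbm 1} \Bigl(\bigvee_{i=1}^r S^n_i\Bigr) \vee S^{n+q}.
\end{equation*}
Since $\tilde p$ is a retraction and $\bar\varphi$ is null-homotopic, a straightforward computation shows that $\Phi$ induces an isomorphism on homology; as all spaces involved are simply connected $CW$-complexes, Whitehead's theorem gives that $\Phi$ is a homotopy equivalence. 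By construction, the pinch onto the first factor composed with $\Phi$ recovers $\tilde p$, and hence pre-composing with $q$ recovers $p$, which completes the argument.
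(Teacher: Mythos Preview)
Your proposal is correct and takes essentially the same approach as the paper: both obtain the extension $\tilde p$ of $p$ over the cofibre, and both build the desired equivalence as the composite $W \xrightarrow{c} W \vee S^{n+q} \xrightarrow{\tilde p \vee \mathbbm 1} (\bigvee_i S^n_i) \vee S^{n+q}$ using the coaction. Your preliminary computation that $\bar\varphi$ is null via Ishimoto's formula is an extra (correct) step the paper does not take---the paper bypasses it by arguing directly that $\tilde p \circ h$ is an equivalence from the existence of sections, and then that $\theta$ is an equivalence---but this is a difference in organisation rather than in substance.
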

\begin{proof}
Since $q<n$, $\pi_q(S^n)\cong0$, theres is a homotopy extension of $p:M\rightarrow \bigvee_{i=1}^rS^n$
\begin{equation}\label{C2}
\xymatrix{
\bigvee^r_{i=1} S^q\ar[r]^-{j}&M\ar[r]^-q\ar[d]_-{p}&W_{}\ar@{.>}[ld]^-{\tilde p}\\
&\bigvee^r_{i=1}S^n.
}
\end{equation}
The cofibre $W$ can be constructed as a $CW$-complex with one $(n+q)$-cell attached to a wedge sum of $n$-spheres. The attaching map of the top cell in $W$ induces a cofibration sequence
\begin{equation}\label{cwc}
\xymatrix{
S^{q+n-1}\ar[r]^{\varphi}& \bigvee^r_{i=1} S^n\ar[r]^-{h}&W\ar[r]^-{b}&S^{q+n},
}
\end{equation}
where $h$ is the inclusion and $b$ is the connecting map. The map $p:M\to \bigvee^r_{i=1}S^n_i$ has a right homotopy inverse, namely the composite $$ \bigvee^r_{i=1} S^n_i\hookrightarrow \bigvee^r_{i=1} S^n_i\vee\bigvee^r_{i=1} S^q_i \xrightarrow i M,$$
and by the homotopy commutativity of \eqref{C2}, and so does the map $\tilde p$. Hence the composite $$\bigvee^r_{i=1} S^n_i\xrightarrow{h} W\overset{\tilde p}{\to}\bigvee^r_{i=1}S^n_i$$ is a homotopy equivalence. There exists a coaction $\sigma:W\to W\vee S^{q+n}$ so that the composition 
$W\xrightarrow{\sigma} W\vee S^{q+n}\xrightarrow{pinch}S^{q+n}$
is homotopic to the connecting map $b$ in the cofibration sequence \eqref{cwc} and the composite
$$W\xrightarrow{\sigma} W\vee S^{q+n}\xrightarrow{pinch}W$$ is a homotopy equivalence. Therefore, since $\tilde p$ has a right homotopy inverse, the map
\begin{equation}
\theta:W\xrightarrow{\sigma}W\vee S^{q+n}\xrightarrow{\tilde p\vee\mathbbm 1}\bigvee_{i=1}^rS^n_i\vee S^{q+n}
\end{equation}
is a homotopy equivalence. Observe that $\theta$ is homotopic to $\tilde p$. Therefore, by the homotopy commutativity of \eqref{C2}, the composite $$M\xrightarrow{q}W\xrightarrow{\theta}\bigvee_{i=1}^rS^n\vee S^{q+n}\xrightarrow{pinch}\bigvee^r_{i=1}S^n$$ is homotopic to $p$.
\end{proof}

\begin{proposition}\label{p:pb_gen}
Let $k,k',m\in\mathbb N$ be such that $k+k'\leq m$, let $G$ be a Lie group and let $\mathcal F=\{M_i\}_{i=1}^r$ be a family of manifolds that arise as total spaces of $S^{q}$-bundles over $S^{n}$ with characteristic elements $\chi(M_i)=j_*\xi^i$, $\xi^i\in\pi_{n-1}(SO(q))$. Let $M=\sharp_{i=1}^rM_i$, $r\geq1$, be a connected sum. Suppose one of the following holds 
\begin{enumerate}
\item $G=SU(m)$, $n=2k$, $q=2k'-1$, $2 \leq k' \leq k$;
\item $G=Sp(m)$, $n=4k$, $q=4k'-1$, $1 \leq k' \leq k$.
\end{enumerate}
Then the map $$p:M_{}\to \bigvee_{i=1}^r M_i\xrightarrow{\bigvee\pi_i} \bigvee_{i=1}^r S^{n}$$ induces a bijection $p^*:\oplus_{i=1}^r\pi_{n-1}(G)\to [M,BG]$ and there is a one-to-one correspondence $$[M,BG]=\mathbb Z^r.$$ 
\end{proposition}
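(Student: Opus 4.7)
The plan is to factor $p$ through the cofibration quotient of Lemma \ref{quotient} and show bijectivity on each factor. Writing $p \simeq \tilde p \circ q$, where $\bigvee_{i=1}^r S^q_i \xrightarrow{j} M \xrightarrow{q} W$ is the cofibration collapsing the $q$-skeleton, $W \simeq \bigvee_{i=1}^r S^n_i \vee S^{n+q}$, and $\tilde p$ is the projection onto the wedge of $n$-spheres, it suffices to prove that both $q^*$ and $\tilde p^*$ induce bijections on $[-,BG]$.

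First I would compute the relevant homotopy groups of $G$. Using Bott periodicity together with the stability ranges $\pi_j(SU(m)) \cong \pi_j(SU)$ for $j < 2m$ and $\pi_j(Sp(m)) \cong \pi_j(Sp)$ for $j < 4m+2$, the hypothesis $k+k' \leq m$ ensures that each of the degrees $q-1$, $n-1$ and $n+q-1$ lies in the stable range. For $G = SU(m)$, the degrees $q-1 = 2k'-2$ and $n+q-1 = 2(k+k')-2$ are positive and even, so $\pi_{q-1}(G) = \pi_{n+q-1}(G) = 0$, whereas $n-1 = 2k-1$ is odd, giving $\pi_{n-1}(G) \cong \mathbb Z$. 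For $G = Sp(m)$, the degrees $q-1 = 4k'-2$ and $n+q-1 = 4(k+k')-2$ are congruent to $2$ or $6$ modulo $8$ and thus give $0$, while $n-1 = 4k-1$ is congruent to $3$ or $7$ modulo $8$ and gives $\mathbb Z$.

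Next I would address the two maps. The wedge splitting of $W$ provides $[W,BG] \cong [\bigvee_{i=1}^r S^n_i,BG] \times [S^{n+q},BG]$, under which $\tilde p^*$ corresponds to $g \mapsto (g,\ast)$; since $[S^{n+q},BG] = \pi_{n+q-1}(G) = 0$, this is a bijection. For $q^*$, I would argue surjectivity as follows: given $f:M \to BG$, the restriction $f \circ j$ represents an element of $\bigoplus \pi_{q-1}(G) = 0$ and is therefore nullhomotopic, so by the homotopy extension property of the cofibration $\bigvee S^q_i \hookrightarrow M$, $f$ may be deformed to a map factoring through $W$. For injectivity, suppose $f_1 \circ q \simeq f_2 \circ q$ for $f_1, f_2: W \to BG$. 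Using the wedge decomposition of $W$, the $S^{n+q}$-component of each $f_i$ lies in $\pi_{n+q}(BG) = 0$ and hence is null, so $f_i \simeq g_i \circ \tilde p$ for some $g_i: \bigvee S^n_i \to BG$; then $g_1 \circ p \simeq g_2 \circ p$ using $\tilde p \circ q \simeq p$ from Lemma \ref{quotient}, and post-composing with the section $\bigvee S^n_i \hookrightarrow M$ of $p$, which exists because each cross section $s_i$ satisfies $\pi_i \circ s_i \simeq \mathbbm{1}$ as recorded in diagram \eqref{e:pipinch}, yields $g_1 \simeq g_2$ and therefore $f_1 \simeq f_2$.

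Combining these steps, $p^* = q^* \circ \tilde p^*$ is a bijection $\bigoplus_{i=1}^r \pi_{n-1}(G) \cong \mathbb Z^r \to [M,BG]$. The main technical obstacle I anticipate is the careful bookkeeping in the first step, which requires checking that $k+k' \leq m$ places every relevant degree inside the Bott-periodic stable range simultaneously. Once that is secured, the remainder is a clean interplay of Lemma \ref{quotient}, the homotopy extension property, and the cross-section structure of the connected sum afforded by the bundle projections $\pi_i$.
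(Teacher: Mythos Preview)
Your argument is correct and rests on the same core ingredients as the paper's proof—Lemma~\ref{quotient}, the stable vanishing $\pi_{q-1}(G)=\pi_{n+q-1}(G)=0$ together with $\pi_{n-1}(G)\cong\mathbb Z$, and the section of $p$—but is organised more directly. The paper routes through the $n$-skeleton $M^n$ and a comparison of coactions (diagrams~\eqref{d:coactions}--\eqref{d:coactions3}) in order to control the set $[M,BG]$, which carries no obvious group structure; you instead establish surjectivity and injectivity of $q^*$ by an element-level argument using the homotopy extension property and the section, which sidesteps the coaction machinery entirely and is arguably cleaner. One small slip of terminology: in your injectivity step you mean \emph{pre}-composing with the section $s:\bigvee_{i=1}^r S^n_i\to M$ (so that $g_i\circ p\circ s\simeq g_i$), not post-composing.
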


\begin{proof}
By Lemma \ref{quotient} there is a homotopy cofibration sequence
 \begin{equation}\label{iso}
\xymatrix{
\bigvee^r_{i=1}S^q\ar[r]^-{i}&M\ar[r]^-{a}& \bigvee^r_{i=1}S^n\vee S^{n+q}\ar[r]^-{\delta}&\bigvee^r_{i=1}S^{q+1}
}
\end{equation}
where $i$ is the inclusion, $a$ is the composite $M\xrightarrow{q}W\xrightarrow{\simeq}\bigvee^r_{i=1}S^n_i\vee S^{n+q}$ and $\delta$ is the connecting map. Let $M^{n}$ be the $n$-skeleton of $M$. Then $M^n\simeq \bigvee^r_{i=1}(S^q_i\vee S^n_i)$ which is a co-$H$-space. From the exact sequence induced by the attaching map of the $n$-cells,
\begin{equation}\label{c:4sk}
\xymatrix{
\bigvee^r_{i=1}S^{n-1}_i\ar[r]^-{*}&\bigvee^r_{i=1}S^q_i\ar[r]&M^n\ar[r]&\bigvee^r_{i=1}S^{n}_i,}
\end{equation}
we obtain an exact sequence of groups
\begin{equation}\label{n-skcof}
\xymatrix{
[\bigvee^r_{i=1}S^{q+1},BG]\ar[r]^-{0^*}\ar@{=}[d]&[\bigvee^r_{i=1}S^n_i,BG]\ar[r]\ar@{=}[d]&[M^n,BG]\ar[r]&[\bigvee^r_{i=1}S^{q},BG]\ar@{=}[d]\\
\oplus_{i=1}^r\pi_{q+1}(BG)&\oplus_{i=1}^r\pi_n(BG)&&\oplus_{i=1}^r\pi_{q}(BG)
}
\end{equation}
where $0^*$ is the trivial map. Suppose that one of the conditions of the proposition holds. Then there are isomorphisms \cite{Jam2} 
\begin{equation}\label{isoLG}
\oplus_{i=1}^r\pi_q(BG)\cong\oplus_{i=1}^r\pi_{q-1}(G)\cong0.
\end{equation}
\begin{equation}
\oplus_{i=1}^r\pi_n(BG)\cong\oplus_{i=1}^r\pi_{n-1}(G)\cong\mathbb Z^r.
\end{equation}
Exactness of \eqref{n-skcof} implies that $[M^n,BG]=\mathbb Z^r$. The coaction $$\psi :\bigvee^r_{i=1}S^{n}\to \bigvee^r_{i=1}(S^{q+1}_i\vee S^n_i)$$ induces an action $\psi^*$
\begin{equation*}
\psi^*:[\bigvee^r_{i=1}S^n_i,BG]\times [\bigvee^r_{i=1}S^{q+1}_i,BG]\to [\bigvee^r_{i=1}S^n_i,BG].
\end{equation*}
The inclusion of the wedge $\bigvee_{i=1}^rS^q\xrightarrow i M$ factors through the $n$-skeleton $M^n$, which induces a homotopy commutative diagram of cofibrations
\begin{equation}\label{d:coactions}
\xymatrix{
\bigvee^r_{i=1}S^q_i\ar[r]\ar@{=}[d]&M^n\ar[r]\ar[d]&\bigvee^r_{i=1}S^n_i\ar[r]^-{m}\ar[d]_-{i_1}& \bigvee^r_{i=1}S^{q+1}_i\ar@{=}[d]\\
\bigvee^r_{i=1}S^q_i\ar[r]^-i&M\ar[r]^-{a}&(\bigvee^r_{i=1}S^n_i)\vee S^{n+q}\ar[r]^-\delta & \bigvee^r_{i=1}S^{q+1}_i
}
\end{equation}
where $i_1:\bigvee^r_{i=1}S^n_i\to (\bigvee^r_{i=1}S^n_i)\vee S^{n+q}$ is the inclusion into the wedge.  From \eqref{d:coactions} 
we obtain a homotopy commutative diagram as follows
\begin{equation}\label{d:coactions2}
\xymatrix{
\bigvee^r_{i=1}S^n_i\ar[r]^-{\psi}\ar[d]_-{i_1}& \bigvee^r_{i=1}S^n_i\vee \bigvee^r_{i=1}S^{q+1}_i\ar[d]^-{i_1\vee\mathbbm{1}}\\
\bigvee^r_{i=1}S^n_i\vee S^{n+q}\ar[r]^-{\psi'}&( \bigvee^r_{i=1}S^n_i)\vee S^{n+q}\vee \bigvee^r_{i=1}S^{q+1}_i,
}
\end{equation}
where $\psi'$ is the coaction associated to the bottom row in \eqref{d:coactions}. Applying the functor $[-,BG]$ we obtain a commutative diagram of homotopy groups
\begin{equation}\label{d:coactions3}
\xymatrix{
\oplus_{i=1}^r\pi_n(BG)\oplus \pi_{n+q}(BG)\oplus\oplus_{i=1}^r\pi_{q+1}(BG)\ar[r]^-{(\psi')^*}\ar[d]_-{i_1^*\times\mathbbm{1}}& \oplus_{i=1}^r\pi_n(BG)\oplus\pi_{n+q}(BG)\ar[d]^-{i_1^*}\\
 \oplus_{i=1}^r\pi_n(BG)\oplus\oplus_{i=1}^r\pi_{q+1}(BG)\ar[r]^-{\psi^*}& \oplus_{i=1}^r\pi_n(BG).\\
}
\end{equation}
Under the conditions on $G$ it follows that $\pi_{n+q}(BG)\cong\pi_{n+q-1}(G)\cong0$. The vertical arrows in \eqref{d:coactions3} are isomorphisms implying that $(\psi')^*=\psi^*$. Since $(\psi')^*=\psi^*$, the induced map $a^*:[\bigvee^r_{i=1}S^n_i\vee S^{n+q},BG]\to[M,BG]$ is an isomorphism and therefore $[M_{},BG]=\mathbb{Z}^r$.

By Lemma \ref{quotient} the composite $$M\xrightarrow{q} W\xrightarrow{\simeq}(\bigvee_{i=1}^rS^n_i)\vee S^{n+q}\xrightarrow{pinch}\bigvee_{i=1}^r S^{n}_i$$
is homotopic to the map $p:M\to \bigvee_{i=1}^rS^n_i$.
 Consider the commutative diagram
\begin{equation}\label{d:tri}
\xymatrix{
[M,BG]&[\bigvee^r_{i=1}S^n_i,BG]\times[S^{n+q},BG]\ar[l]_-{a^*} \\
[\bigvee^r_{i=1}S^n_i,BG]\ar[ur]_-{q_1}\ar[u]^-{p^{*}}
}
\end{equation}
where the map $q_1$ is the inclusion of the first factor. Since $a^*$ is an isomorphism, by the commutativity of \eqref{d:tri} it follows that the induced map $$p^{*}:[M,BG]\to[\bigvee^r_{i=1}S^n_i,BG]=\oplus_{i=1}^r\pi_{n-1}(G)\cong\mathbb Z^r$$ is also an isomorphism. 
\end{proof}

We want extend the classification result in \cite{IMS} for the principal $G$ bundles over $S^3$-bundles over $S^4$ with torsion-free homology to the case when the manifold is given as a  connected sum and $G$ is any simply connected simple compact Lie group. 

Let $M_{\xi}$ be the $S^3$-bundle over $S^4$ classified by $(\xi,0)\in\pi_3(SO(4))\cong\pi_3(SO(3))\oplus\pi_3(S^3)=\mathbb{Z}\oplus\mathbb{Z}$. Then the projection map $\pi:M_{\xi}\to S^4$ admits cross sections and we have the following homotopy equivalences
\begin{itemize}
\item $M_{\xi}\simeq M_{\xi'}$ if and only if $\xi\equiv \pm \xi' \pmod{12}$;
\item $M_{\xi}\simeq S^3\times S^4$ if and only if $\xi\equiv 0 \pmod{12}$.
\end{itemize}
Let $|\pi_6(G)|$ be the order of $\pi_6(G)$. In the homotopy theory of gauge groups of principal $G$-bundles over connected sums of $S^3$-bundle over $S^4$, the values of $\xi$ play a key role if $|\pi_6(G)|> 1$. 

Let $M=\sharp_{i=1}^rM_{\xi^i}$, where each factor $M_{\xi^i}$ is the total space of an $S^3$-bundle over $S^4$ classified by $\xi^i\in\pi_3(SO(3))$. The manifold $M$ is 2-connected and has torsion-free homology. The attaching map of the top cell in the $CW$-structure of $M$ induces a homotopy cofibration sequence 
\begin{equation}
\xymatrix{ 
S^6 \ar[r]^-{\varphi}&\bigvee_{i=1}^r (S^3_i\vee S^4_i)\ar[r]^-i&M\ar[r]^q & S^{7}\ar[r]^-{\Sigma \varphi}& \bigvee_{i=1}^r (S^4_i\vee S^5_i),
}
\end{equation}
where $\varphi$ is the attaching map of the top cell, $i$ is the inclusion of the 4-skeleton and $q$ is the pinch map to the top cell. Recall that the attaching can be expressed as $\varphi=\sum_{i=1}^r(\bar\eta^i+[\iota_n^i,\iota_q^i]),$ where $\iota_3^i,\iota_4^i$ are generators of $\pi_4(S^3_i)$, $\pi_4(S_i^4)$ respectively and $\bar\eta^i$, $ 1\leq i\leq r$, is given by the composite $$S^{6}\xrightarrow{J(\xi^i)} S^3_i\hookrightarrow S^4_i\vee S^3_i.$$
We obtain an exact sequence
\begin{equation}\label{cofprods1}
\xymatrix{ 
[\bigvee_{i=1}^r (S^4_i\vee S^5_i),BG]\ar[r]^-{(\Sigma \varphi)^{*}}&[S^7,BG]\ar[r]^-{q^{*}}& [M,BG]\ar[r]^-{i^{*}}&[\bigvee_{i=1}^r (S^3_i\vee S^4_i),BG]\ar[r]^-{ \varphi^{*}}& [S^6,BG].
}
\end{equation}
A generator of $\pi_6(S^3) \cong \mathbb{Z}_{12}$ is given by the Samelson product $\langle \iota,\iota \rangle$. The $J$-homomorphism $J: \pi_3(SO(3))\to\pi_6(S^3)$ sending $\xi^i$ to $\xi^i \mod 12$, is an epimorphism and $\mathrm{Im}(E\circ J)=\mathbb Z_{12}$ is generated by $\Sigma\langle\iota,\iota\rangle$. It follows from Lemma \ref{l:attachingcw} that there is a homotopy $p_1\circ\Sigma \varphi\simeq \ell\Sigma\mathcal\langle \iota,\iota \rangle$, where $\ell=\gcd(12,\xi^1,\dots, \xi^r)$ and $p_1$ is the pinch onto $S^3_1$. 

\begin{lemma}\label{l:coker}
Let $G$ be a simply connected simple compact Lie group. Suppose that we have $\gcd(|\pi_6(G)|,\xi^1,\dots,\xi^r) = 1$. Then $(\Sigma\varphi)^*$ is surjective; in particular, the image of $(\Sigma\varphi)^*$ is isomorphic to $\mathbb Z_{|\pi_6(G)|}$. 
\end{lemma}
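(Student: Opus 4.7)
The plan is to compute the image of $(\Sigma\varphi)^*\colon [\bigvee_{i=1}^r(S^4_i\vee S^5_i), BG] \to [S^7, BG] = \pi_7(BG) \cong \pi_6(G)$ directly by unpacking $\Sigma\varphi$. Since Whitehead products vanish after suspension, and $\varphi = \sum_{i=1}^r(\bar\eta^i+[\iota^i_4,\iota^i_3])$ with $\bar\eta^i\colon S^6 \xrightarrow{J(\xi^i)} S^3_i \hookrightarrow S^4_i\vee S^3_i$, we have
\[
\Sigma\varphi \simeq \sum_{i=1}^r \Sigma\bar\eta^i,
\]
where each $\Sigma\bar\eta^i$ is the composite $S^7 \xrightarrow{E J(\xi^i)} S^4_i \hookrightarrow \bigvee_{j=1}^r(S^4_j\vee S^5_j)$. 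In particular $\Sigma\varphi$ factors through $\bigvee_{i=1}^r S^4_i$, so $(\Sigma\varphi)^*$ annihilates the $[S^5_i, BG] \cong \pi_4(G)$ summands and is determined by its restriction to $\bigoplus_{i=1}^r[S^4_i, BG] \cong \bigoplus_{i=1}^r\pi_3(G)$.

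Next, I would use the bilinearity of the composition pairing $\pi_3(G)\otimes\pi_7(S^4) \to \pi_6(G)$: linearity in the left factor comes from pinch-map addition on $\pi_4(BG)$, while linearity in the right factor uses that $EJ(\xi^i) = \xi^i\cdot \Sigma\langle\iota,\iota\rangle \in \pi_7(S^4)$ is a suspension and hence a co-$H$-map. Writing $[S^4, BG] \cong \pi_3(G) \cong \mathbb{Z}\{\hat\epsilon\}$, where $\hat\epsilon$ is the adjoint of a generator $\epsilon \in \pi_3(G)$, the above formula yields
\[
(\Sigma\varphi)^*(n_1\hat\epsilon,\dots,n_r\hat\epsilon) \simeq \Bigl(\sum_{i=1}^r \xi^i n_i\Bigr)\cdot\gamma,
\]
where $\gamma := \hat\epsilon\circ \Sigma\langle\iota,\iota\rangle \in \pi_7(BG) \cong \pi_6(G)$ corresponds under the loop--suspension adjunction to $\epsilon\circ\langle\iota,\iota\rangle \in \pi_6(G)$. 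As $(n_1,\dots,n_r)$ ranges over $\mathbb{Z}^r$, the coefficient $\sum_i \xi^i n_i$ takes precisely the values in $\gcd(\xi^1,\dots,\xi^r)\mathbb{Z}$, so
\[
\mathrm{Im}(\Sigma\varphi)^* = \langle \gcd(\xi^1,\dots,\xi^r)\cdot\gamma\rangle \leq \pi_6(G).
\]

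To conclude surjectivity, I would invoke the classical computation (see e.g.\ Mimura--Toda's tables of homotopy groups of Lie groups) that for every simply connected simple compact Lie group $G$ the group $\pi_6(G)$ is cyclic of order dividing $12$ and is generated by $\gamma$. Since $|\pi_6(G)|$ divides $12$, the hypothesis $\gcd(|\pi_6(G)|,\xi^1,\dots,\xi^r) = 1$ forces $\gcd(\xi^1,\dots,\xi^r)$ to be a unit modulo $|\pi_6(G)|$, so $\langle \gcd(\xi^1,\dots,\xi^r)\,\gamma\rangle = \langle\gamma\rangle = \pi_6(G) \cong \mathbb{Z}_{|\pi_6(G)|}$, giving both surjectivity and the claimed identification of the image. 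The main obstacle is precisely this last structural input about $\pi_6(G)$: without knowing that $\gamma$ generates, the coprimality hypothesis alone would be insufficient. Once this input is granted, the remainder reduces to the elementary $\gcd$/linear-algebra computation carried out above, consistent with the paper's earlier observation that $p_1\circ\Sigma\varphi \simeq \ell\,\Sigma\langle\iota,\iota\rangle$ with $\ell = \gcd(12,\xi^1,\dots,\xi^r)$.
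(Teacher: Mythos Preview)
Your argument is correct and rests on the same structural fact the paper isolates: that for each simply connected simple compact $G$ with $\pi_6(G)\neq 0$, a generator of $\pi_6(G)$ is obtained as $\epsilon\circ\langle\iota,\iota\rangle$ with $\epsilon$ a generator of $\pi_3(G)$. Where the two arguments diverge is in how this fact is used. The paper works indirectly: it identifies $\mathrm{Im}(\Sigma\varphi)^*$ with $\ker q^*$, then exhibits explicit extensions via the diagram built from $p_1\circ\Sigma\varphi\simeq\ell\,\Sigma\langle\iota,\iota\rangle$ to show that all multiples of the generator lie in $\ker q^*$. You instead evaluate $(\Sigma\varphi)^*$ directly on $\bigoplus_i\pi_4(BG)$ using bilinearity of the composition pairing, arriving at $\mathrm{Im}(\Sigma\varphi)^*=\langle\gcd(\xi^1,\dots,\xi^r)\cdot\gamma\rangle$ without passing through the extension picture. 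Your route is a bit more streamlined and makes the role of the $\xi^i$ transparent; the paper's route has the advantage that it \emph{proves} the generator statement case by case (for $SU(3)$ via the fibration $S^3\to SU(3)\to S^5$, for $G_2$ via the $3$-local splitting), whereas you cite it. One small caution: the Mimura--Toda tables give the groups $\pi_6(G)$, but the specific fact that the generator factors through $S^3\hookrightarrow G$ is not just a table lookup; this is exactly the content of the paper's Table~\ref{generators} and the fibration arguments preceding it, so your citation should point there or to an equivalent source.
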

\begin{proof}
First notice that if $|\pi_6(G)|=1$ then $(\Sigma\varphi)^*:\oplus_{i=1}^r\pi_3(G)\oplus\oplus_{i=1}^r\pi_5(G)\to \pi_6(G)$ is the trivial map. Therefore $\
\mathrm{Im}(\Sigma\varphi)^*\cong0\cong\mathbb Z_1$. 

Now suppose $|\pi_6(G)|>1$. Then $G$ is $SU(2)\cong Sp(1)$, $SU(3)$ or $G_2$. 
From the exact sequence \eqref{cofprods1} we have $\mathrm{Im}(\Sigma \varphi)^*=\ker q^*$. A map $f:S^7\to BG$ is in the kernel of $q^*$ if and only if there is an extension $\tilde f:\bigvee_{i=1}^rS^4_i\vee S^5_i\to BG$, such that the diagram
\begin{equation}
\xymatrix{
M\ar[r]^-{q}&S^7\ar[r]^-{\Sigma\varphi}\ar[d]_{f}& \bigvee_{i=1}^r (S^4_i\vee S^5_i)\ar@{.>}[dl]^-{\tilde f}\\
&BG
}
\end{equation}
 homotopy commutes. We claim that the generator $\tilde\gamma$ of $\pi_6(G)$ factors as $\tilde\gamma:S^6\xrightarrow {\langle\iota,\iota\rangle}S^3\hookrightarrow G$. In this case, the adjoint of $\tilde\gamma$, $$\tilde\gamma^{\rm{Ad}}:S^7\xrightarrow{\Sigma \langle\iota,\iota\rangle}S^4\hookrightarrow BG,$$ is a  generator of $\pi_7(BG)$.
 
For $G=SU(2)$, our claim is trivial. For $G = SU(3)$, consider the exact sequence of homotopy groups induced by the fibre bundle $S^3\to SU(3)\to S^5$,
\begin{equation}\label{f:su3}
\pi_6(S^3)\xrightarrow{i_1^*} \pi_6(SU(3))\xrightarrow{p_1^*} \pi_6(S^5)\xrightarrow{\delta^*} \pi_5(S^3)\to \pi_5(SU(3)).\end{equation}
 From \cite{Tod} we have $\pi_6(S^5)\cong\pi_5(S^3)\cong \mathbb Z_2$ and $\pi_5(SU(3))\cong \mathbb Z$. This implies that the last map in \eqref{f:su3} is the zero map. Therefore $\delta^*$ is surjective, and as $\delta^*$ is a map between two copies of $\mathbb{Z}_2$, it must be an isomorphism. In turn, $p_1^*$ is the zero map. Thus any map $S^6\to SU(3)$ factors as a composite $$S^6\to S^3\overset{i_1}{\to}SU(3),$$ proving our claim for $G=SU(3)$. For $G = G_2$, localising at $p=3$ there is an exact sequence \cite{MimJam}
 $$ \pi_6(S^3)\overset{i_2^*}{\to} \pi_6(G_2)\overset{p_2^*}{\to} \pi_6(S^{11}).$$
Since $\pi_6(S^{11})\cong0$, the map $i_2^*$ is surjective when localised at 3. Since $\pi_6(G_2) \cong \mathbb{Z}_3$ is invariant under localisation at 3, the map $i_2^*$ is surjective integrally. Thus any map $S^6\to G_2$ factors as a composite $$S^6\to S^3\xrightarrow{i_2}G_2,$$ proving our claim in the case $G=G_2$.

In the Table \ref{generators} we collect this information on the generators of the non-trivial groups $\pi_6(G)$, that is, when  $G=SU(2),SU(3)$ or $G_2$.
\begin{table}
\begin{center}
\begin{tabular}{|c|c|c|}
\hline
$G$&$\pi_6(G)$&generator\\
\hline
$SU(2)\cong S^3$&$\mathbb{Z}_{12}$&$\langle \iota,\iota\rangle$\\
$SU(3)$&$\mathbb{Z}_6$&$i_1\circ\langle\iota,\iota\rangle$\\
$G_2$&$\mathbb{Z}_3$&$i_2\circ \langle\iota,\iota\rangle$\\
\hline
\end {tabular}
\caption{Homotopy groups $\pi_6(G)$ and their generators }\label{generators}
\end{center}
\end{table}
We have that $p_1 \circ \Sigma\varphi\simeq \ell\Sigma \langle \iota,\iota\rangle $, where the map $p_1:\bigvee_{i=1}^r S^4_i \vee S^5_i \to S^4_1$ is the pinch onto $S^4_1$ and $\ell=\gcd(12,\xi^1,\dots,\xi^r)$. Therefore the diagram
\begin{equation}\label{coker}
\xymatrix@=0.4in{ 
M_{}\ar[r]^-{h}\ar[d]_-{q} &BG\\
S^7\ar[r]^{\ell k\Sigma \langle\iota,\iota\rangle}\ar[d]_-{\Sigma\varphi} &S^4\ar[u]^{i} \\
\bigvee_{i=1}^rS^4_i\vee S^5_i\ar[r]^-{p_1}&S^4_1,\ar[u]_k
} 
\end{equation}\\
where $k$ is the degree $k$ map, homotopy commutes. The maps $i\circ \ell k\Sigma\langle\iota,\iota\rangle$ are therefore in the kernel of $q^*.$ The result follows from the exact sequence \eqref{cofprods1}, the diagram \eqref{coker}, Table \ref{generators} and the fact that, by assumption, $\ell = 1$.
\end{proof}

\begin{lemma}\label{l:Phi0}
Let $M$ be a connected sum of sphere bundles $M_{\xi^i},$ for $1\leq i\leq d.$ The map $$\varphi^*: [M^4,BG]\to\pi_6(BG)$$ is trivial. 
\end{lemma}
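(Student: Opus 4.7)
The plan is to prove that $\varphi^*$ is trivial by exploiting the explicit form of the attaching map $\varphi$ together with the vanishing of $\pi_3(BG)$ for any Lie group $G$.

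First, I would use the fact (already invoked earlier in the paper) that $M^4 \simeq \bigvee_{i=1}^r (S^3_i \vee S^4_i)$, so that $[M^4, BG] \cong \bigoplus_{i=1}^r \pi_3(BG) \oplus \bigoplus_{i=1}^r \pi_4(BG)$. Since $G$ is a (simply connected simple compact) Lie group we have $\pi_2(G) = 0$, and hence $\pi_3(BG) \cong \pi_2(G) = 0$. Therefore, for any map $f: M^4 \to BG$, the restriction $f \circ i_3^i : S^3_i \hookrightarrow M^4 \to BG$ is null-homotopic for every $1 \leq i \leq r$.

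Next, I would apply the formula for the attaching map recalled from Theorem \ref{ish}:
\[
\varphi = \sum_{i=1}^{r}\bigl(\bar\eta^i + [\iota_4^i, \iota_3^i]\bigr) \in \pi_6\Bigl(\bigvee_{i=1}^{r}(S^3_i \vee S^4_i)\Bigr),
\]
where $\bar\eta^i$ is the composite $S^6 \xrightarrow{J(\xi^i)} S^3_i \hookrightarrow S^4_i \vee S^3_i$. Post-composing with $f$ and using that $\pi_6(BG)$ is abelian gives
\[
\varphi^*(f) = f \circ \varphi \simeq \sum_{i=1}^{r} f_*(\bar\eta^i) + \sum_{i=1}^{r} f_*\bigl([\iota_4^i, \iota_3^i]\bigr).
\]
For the first family of terms, $f_*(\bar\eta^i) = (f \circ i_3^i) \circ J(\xi^i)$, which factors through the null-homotopic map $f \circ i_3^i : S^3_i \to BG$ and is therefore null-homotopic. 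For the second family, naturality of Whitehead products yields $f_*([\iota_4^i, \iota_3^i]) = [f \circ i_4^i,\, f \circ i_3^i]$ in $\pi_6(BG)$; by bilinearity of the Whitehead product and the vanishing of $f \circ i_3^i$, this class is also zero.

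Combining these observations gives $\varphi^*(f) = 0$ for every $f \in [M^4, BG]$, so $\varphi^*$ is the trivial map. There is no real obstacle here: the argument is essentially bookkeeping once one knows $\pi_3(BG) = 0$ and has the explicit form of $\varphi$ from Theorem \ref{ish}. The only subtle point worth checking carefully is the naturality step for the Whitehead product, which relies on the standard identity $f_*[\alpha,\beta] = [f_*\alpha, f_*\beta]$ in the homotopy groups of the target.
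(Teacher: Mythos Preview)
Your proof is correct and follows essentially the same approach as the paper. Both arguments exploit $\pi_3(BG)=0$ to kill the $S^3$-components of $[M^4,BG]$ and then observe that every summand of $\varphi$ (the $\bar\eta^i$ terms and the Whitehead products $[\iota_4^i,\iota_3^i]$) becomes trivial once the $S^3_i$ factors are collapsed; the paper phrases this as factoring $f$ through the pinch map $M^4\to\bigvee_i S^4_i$ and showing $\mathrm{pinch}\circ\varphi$ is null, while you compute $f\circ\varphi$ directly via naturality and bilinearity of the Whitehead product, but the content is the same.
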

\begin{proof}
Let $G$ be any of the groups stated. By connectivity we have $[M^4,BG]=[{\bigvee}_{i=1}^r S^4_i,BG]$. Therefore any map $f:M^4\to BG,$ factors as the composite $$M^4\xrightarrow{\rm{pinch}}\bigvee^r_{i=1} S_i^4\rightarrow  BG.$$ 
 Thus there is a commutative diagram
\begin{equation*}
\xymatrix@=0.5in{ 
S^6\ar[r]^{\tilde\varphi}\ar[d]_{{\varphi}} &{\bigvee}^r_{i=1}S^4_i\ar[d]^{j} \\
M^4\ar[r]^{}\ar[ur]^{ {pinch}} &BG.
} 
\end{equation*}
where $\tilde\varphi={{pinch}}\circ\varphi$. Thus $\tilde\varphi\in \pi_6(\bigvee_{i=1}^r S^4_i)\cong\oplus_{i=1}^r\pi_6(S^4_i)$. The composite $$S^6\xrightarrow{\varphi}\bigvee_{i=1}^rS^4_i\xrightarrow{pinch}S^4_i$$ is nullhomotopic for all $i$, since all manifolds $M_{\xi^i}$ have cross sections. Therefore $\tilde\varphi$ is nullhomotopic, implying that $\varphi^{*}$ is trivial.  
\end{proof}

    We present a classification of principal $G$-bundles which generalises Proposition 2.3 in \cite{IMS} fore the case of $S^3$-bundles over $S^4$ with torsion-free homology. That is, we include connected sums and the Lie groups $G$ such that $|\pi_6(G)|>1$, namely $SU(2)$, $SU(3)$ and $G_2$. These result also extends the computations of $[M,BG]$ given in Proposition \ref{p:pb_gen} for the case $n=4$ and $q=3$.

\begin{proposition}\label{classext}

Let $M=\sharp_{i=1}^r M_{\xi^i}$, $\xi^i\in\mathbb Z$, $r\geq 1$. If $G$ is a simply connected simple compact Lie group and $\gcd (|\pi_6(G)|,\xi^i,\dots,\xi^r)=1$, then $$[M,BG]=\mathbb{Z}^r.$$
Moreover, the map $$p:M\xrightarrow{pinch}\bigvee_{i=1}^rM_{\xi^i}\xrightarrow{\vee\pi_i} \bigvee_{i=1}^rS^4_i$$ induces a bijection $p^*:\oplus_{i=1}^r\pi_3(G)\to [M,BG]$.
\end{proposition}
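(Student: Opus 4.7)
The plan is to analyze the Puppe cofibration sequence associated to the attaching map $\varphi$ of the top cell of $M$. Since $M^4 \simeq \bigvee_{i=1}^r(S^3_i \vee S^4_i)$ and $M$ is obtained by attaching a single $7$-cell along $\varphi:S^6\to M^4$, applying the contravariant functor $[-,BG]$ yields precisely the exact sequence \eqref{cofprods1}. My strategy is to use Lemmas \ref{l:Phi0} and \ref{l:coker} to force both $\varphi^*$ and $q^*$ to vanish, so that the restriction $i^*:[M,BG]\to[M^4,BG]$ becomes a bijection, and then to compute $[M^4,BG]$ directly.

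By Lemma \ref{l:Phi0} the map $\varphi^*$ is trivial, so $i^*$ is surjective. The hypothesis $\gcd(|\pi_6(G)|,\xi^1,\dots,\xi^r)=1$ together with Lemma \ref{l:coker} implies that $(\Sigma\varphi)^*$ is surjective onto $\pi_7(BG)\cong\pi_6(G)$; exactness of \eqref{cofprods1} at $[S^7,BG]$ then forces $q^*=0$, so $i^*$ is also injective. To compute $[M^4,BG]$, I would invoke the standard wedge decomposition
\[
[M^4,BG] \;=\; \bigoplus_{i=1}^r \pi_3(BG) \;\oplus\; \bigoplus_{i=1}^r \pi_4(BG) \;=\; \bigoplus_{i=1}^r \pi_2(G) \;\oplus\; \bigoplus_{i=1}^r \pi_3(G),
\]
and then use that for any simply connected simple compact Lie group $G$ one has $\pi_2(G)=0$ and $\pi_3(G)\cong\mathbb{Z}$. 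This gives $[M,BG]\cong\mathbb{Z}^r$.

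For the statement about $p^*$, I would exploit that each bundle projection $\pi_i:M_{\xi^i}\to S^4_i$ admits a cross-section $s_i$. Assembling them produces a map $j:\bigvee_{i=1}^r S^4_i\to M$ with $p\circ j\simeq \mathbbm{1}_{\bigvee S^4_i}$, and $j$ factors through $M^4$ as the inclusion of the $4$-dimensional summands, $j=i\circ\iota_4$ with $\iota_4:\bigvee_{i=1}^r S^4_i\hookrightarrow M^4$. Consequently $j^*=\iota_4^*\circ i^*$, which is a composite of two bijections (using $\pi_2(G)=0$ to kill the $S^3_i$-factors in $\iota_4^*$ and the previous step for $i^*$). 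The relation $j^*\circ p^*=\mathbbm{1}$ then identifies $p^*$ as the two-sided inverse of $j^*$, and in particular a bijection.

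The main obstacle is already absorbed into Lemma \ref{l:coker}, where the gcd hypothesis is converted into the surjectivity of $(\Sigma\varphi)^*$; once that and Lemma \ref{l:Phi0} are available, the rest of the argument is a clean diagram chase in the long cofibre sequence together with the standard low-dimensional homotopy of $G$.
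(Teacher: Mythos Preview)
Your argument contains a genuine gap at the step where you deduce that $i^*$ is injective from $q^*=0$. The Puppe sequence
\[
[S^7,BG]\xrightarrow{\,q^*\,}[M,BG]\xrightarrow{\,i^*\,}[M^4,BG]
\]
is exact only as a sequence of \emph{pointed sets} at $[M,BG]$, because $M$ is neither a suspension nor a co-$H$-space and $BG$ is not an $H$-space for $G=SU(2),\,SU(3),\,G_2$. Exactness here says precisely that $\operatorname{Im}(q^*)=(i^*)^{-1}(*)$; equivalently, the fibres of $i^*$ are the orbits of the natural action of $[S^7,BG]$ on $[M,BG]$ coming from the coaction $M\to M\vee S^7$, and $\operatorname{Im}(q^*)$ is only the orbit through the basepoint. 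Knowing that $q^*=0$ tells you that this one orbit is a singleton, but it says nothing about the other orbits, so you cannot conclude that $i^*$ is injective from this alone. The paper flags exactly this point and circumvents it by invoking Rutter's description of the fibres: the maps $\Gamma(\alpha_j,\varphi)$ governing each fibre all coincide with $(\Sigma\varphi)^*$ because $\varphi^*$ is trivial (hence a homomorphism), and then Lemma~\ref{l:coker} makes every fibre a singleton simultaneously.

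Your treatment of surjectivity of $i^*$, the computation of $[M^4,BG]$, and the argument for $p^*$ via the section $j$ are all fine and match the paper's approach. To repair the injectivity step you need either the Rutter machinery the paper uses, or some other argument showing that the $[S^7,BG]$-action on $[M,BG]$ is uniformly trivial (not just trivial at the basepoint).
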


\begin{proof}
Consider the following exact sequence
\begin{equation}\label{c}
\xymatrix{ 
[\bigvee_{i=1}^r(S^4_i\vee S^5_i),BG]\ar[r]^-{(\Sigma \varphi)^{*}}&[S^7,BG]\ar[r]^-{q^{*}}& [M,BG]\ar[r]^-{i^{*}}&[\bigvee_{i=1}^r(S^3_i\vee S^4_i),BG]\ar[r]^-{ \varphi^{*}}& [S^6,BG].
}
\end{equation}

First suppose $|\pi_6(G)|=1$. An argument along the lines of the proof of Proposition \ref{p:pb_gen} shows that the map $i^*$ is injective. By Lemma 
\ref{l:Phi0} the map $i^*$ is surjective. Therefore we obtain in this case $$[M,BG]=\oplus_{i=1}^r\pi_4(BG)\cong\mathbb{Z}^r.$$
Now suppose $|\pi_6(G)|=1$. Notice that even if $q^* = 0$ in \eqref{c} this does not imply immediately that $i^*$ is injective, since in general the set $[M,BG]$ is not a group.
For $j \in\mathbb{Z}^r$, let $\alpha_j \in  \oplus_{i=1}^r\pi_4(BG)$ be the map corresponding to $j$ under the isomorphism $ \oplus_{i=1}^r\pi_4(BG) \cong \mathbb{Z}^r$.
According to Theorem 3.2.1 in \cite{Rut}, we can define maps $\Gamma(\alpha_j,\varphi):[\bigvee_{i=1}^r(S^4_i\vee S^5_i),BG]\to [S^7,BG]$  for each $j\in\mathbb Z^r$ such that 
$$[M,BG]=\bigcup_{j\in\mathbb Z^r} \mathrm{coker}\Gamma(\alpha_j,\varphi).$$ 
Moreover, from Theorem 3.3.3 in \cite{Rut} we have that if $\varphi^*$ is a homomorphism then 
\begin{equation}\label{gamma}
\Gamma(\alpha_j,\varphi)=(\Sigma\varphi)^*.
\end{equation}
Since the map $\varphi^*$ is trivial, equality in \eqref{gamma} holds and we have 
\begin{equation}
[M,BG]=\mathbb Z^r\times \mathrm{coker}(\Sigma\varphi)^*,
\end{equation}
where $\mathrm{coker}(\Sigma\varphi)^*=\pi_6(G)/\mathrm{Im}(\Sigma\varphi)^*$.
Using Lemma \ref{l:coker}  we have $\mathrm{coker}(\Sigma\varphi)^*=0.$

Finally, the map $p:M\to\bigvee _{i=1}^rS^4_i$ has a right homotopy inverse so that the composite $$\bigvee _{i=1}^rS^4_i\hookrightarrow M\overset{p}{\to} \bigvee _{i=1}^rS^4_i$$ is a homotopy equivalence. Therefore the composite
\begin{equation}\label{e:bijection}
\bigvee _{i=1}^rS^4_i\hookrightarrow \bigvee _{i=1}^r(S^3_i\vee S^4_i)\xrightarrow i M\overset{p}{\to} \bigvee _{i=1}^rS^4_i
\end{equation}
is a homotopy equivalence.  Thus applying the functor $[-,BG]$ to \eqref{e:bijection} shows that the map $$p^*:\oplus_{r=1}^r\pi_4(BG)\to[M,BG]$$ is a bijection.
\end{proof}
\begin{remark}
Proposition \ref{classext}  allows to recover the conclusion of Proposition 2.3 in \cite{IMS} for the case $r=1$.
\end{remark}

\begin{cor}\label{cor:pringb}
Let $k,k',m\in\mathbb N$ be such that $k+k'\leq m$, let $G$ be a Lie group and $\mathcal F=\{M_i\}_{i=1}^r$ be a family of manifolds that arise as total spaces of $S^{q}$-bundles over $S^{n}$ with characteristic elements $\chi(M_i)=j_*\xi^i$, $\xi^i\in\pi_{n-1}(SO(q))$. Let $M=\sharp M_i^r$ be a connected sum. Suppose that one of the following holds:
\begin{enumerate}
\item $G=SU(m)$, $n=2k$, $q=2k'-1$, $2 \leq k' \leq k$;
\item $G=Sp(m)$, $n=4k$, $q=4k'-1$, $1 \leq k' \leq k$;
\item $n=4$, $q=3$, $\gcd (|\pi_6(G)|,\xi^1,\dots,\xi^r)=1$ and $G$ is a simply connected simple compact Lie group.
\end{enumerate}
Then there is a one-to-one correspondence $Prin_G(M)\xrightarrow{1-1}\mathbb Z^r$.
\end{cor}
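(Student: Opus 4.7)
The corollary is essentially a bookkeeping consequence of the earlier classification results, so the strategy is to reduce $Prin_G(M)$ to $[M,BG]$ and then invoke the appropriate proposition depending on which of the three hypotheses holds. First I would observe that, by the standard classification theorem recalled at the start of Section \ref{s:prin-G}, there is a bijection $Prin_G(M) \leftrightarrow [M,BG]$ between isomorphism classes of principal $G$-bundles over $M$ and unpointed homotopy classes of maps $M \to BG$. In each of the three cases the group $G$ is either $SU(m)$, $Sp(m)$, or a simply connected simple compact Lie group, so $BG$ is simply connected; hence the exact sequence
\[
\pi_1(BG) \xrightarrow{\partial} [M,BG]_* \to [M,BG] \xrightarrow{ev^*} \pi_0(BG)
\]
from the evaluation fibration collapses to a bijection $[M,BG] \leftrightarrow [M,BG]_*$, exactly as noted in the discussion preceding Lemma \ref{l:classext}.

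With this reduction in hand, the proof splits according to the three cases. In cases (1) and (2) the hypotheses are precisely those of Proposition \ref{p:pb_gen}, which gives $[M,BG]_* \cong \mathbb{Z}^r$, realised by the pinch-and-project map $p : M \to \bigvee_{i=1}^r S^n_i$. In case (3) the hypothesis $\gcd(|\pi_6(G)|,\xi^1,\ldots,\xi^r)=1$ together with $n=4$, $q=3$ is exactly what is assumed in Proposition \ref{classext}, which again yields $[M,BG]_* \cong \mathbb{Z}^r$ via $p^*$.

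Composing the bijections $Prin_G(M) \leftrightarrow [M,BG] \leftrightarrow [M,BG]_* \leftrightarrow \mathbb{Z}^r$ in each of the three cases then establishes the desired one-to-one correspondence. There is no genuine obstacle here beyond verifying that the hypotheses of the corollary fall under exactly one of the two umbrella propositions; the only minor subtlety is that in case (3) the set $[M,BG]$ is not \emph{a priori} a group (since $M$ is not a co-$H$-space), but this is already handled inside the proof of Proposition \ref{classext} via the machinery of \cite{Rut} and the vanishing of $\varphi^*$ from Lemma \ref{l:Phi0}, so nothing new needs to be done at this stage.
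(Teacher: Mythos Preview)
Your proposal is correct and matches the paper's approach: the corollary is stated without proof in the paper, being an immediate consequence of Propositions \ref{p:pb_gen} and \ref{classext} together with the identification $Prin_G(M) = [M,BG]$ from the start of Section \ref{s:prin-G}. Your detour through $[M,BG]_*$ is slightly redundant since both propositions already compute $[M,BG]$ directly, but it does no harm.
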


\section{Homotopy decompositions of gauge groups over connected sums} \label{s:gauge-groups}

In this section we explore the homotopy theory of both pointed and unpointed gauge groups of principal $G$-bundles over connected sums. Let $P_f\rightarrow X$ be a principal $G$-bundle classified by a map $f:X\to BG$. The unpointed gauge group of the bundle, denoted $\mathcal G^f(X)$, is the group of its bundle automorphisms. Thus an element $\phi\in \mathcal G^f(X)$ is  $G$-equivariant automorphism of $P_f$ and covers the identity on $X$. The pointed gauge group $\mathcal G_*^f(X)$ is the subgroup of $\mathcal{G}_f(X)$ that fixes the fibre at the basepoint pointwise. Let $B\mathcal G^f(X)$ be the classifying space of $\mathcal G^f(X)$. 
By \cite{Got} or  \cite{AB} there are homotopy equivalences 
\begin{equation}\label{Atiyah}
B\mathcal{G}^f(X)\simeq{\rm{Map}}^f(X,BG),
\end{equation}
\begin{equation}\label{Atiyahun}
B\mathcal{G}^f_*(X)\simeq{\rm{Map}}^f_*(X,BG).
\end{equation}
From  \eqref{Atiyah} and \eqref{Atiyahun} we obtain the following equivalences 
\begin{equation}\label{Atiyah22}
\mathcal{G}^f(X)\simeq\Omega{\rm{Map}}^f(X,BG),
\end{equation}
\begin{equation}\label{Atiyahunn}
\mathcal{G}^f_*(X)\simeq\Omega{\rm{Map}}^f_*(X,BG).
\end{equation}
In what follows we use equivalences \eqref{Atiyah}-\eqref{Atiyahunn} to get homotopy decompositions of the gauge groups. The following lemma will be needed in order to identify certain spaces that appear in the homotopy decompositions of the gauge groups over $M$. 
\begin{lemma}\label{l:diagram}
There exists a homotopy commutative diagram of cofibrations
\begin{equation}\label{d:deltadiag}
\xymatrix{
{*}\ar[r]\ar[d]&S^{q+n}\ar@{=}[r]\ar@{^{(}->}[d]&S^{q+n}\ar[d]^{}\ar[r]&{*}\ar[d]\\
M\ar@{=}[d]\ar[r]^-{}&\bigvee_{i=1}^r S^n_i\vee S^{q+n}\ar[r]^-{}\ar[d]^{{pinch}}&\bigvee_{i=1}^rS^{q+1}_i\ar[r]^{}\ar[d]^{}&\Sigma M\ar@{=}[d]\\
M\ar[r]^-{p}&\bigvee_{i=1}^r S^n_i\ar[r]^-{}&C\ar[r]^-{}&\Sigma M
}
\end{equation}
which defines the space $C$. Furthermore
there is a homotopy equivalence
$$C \xrightarrow\simeq\bigvee_{i=1}^{r-\bar t}S^{q+1}_{i}\vee{ \Sigma Y_{\mathcal F}}.$$
\end{lemma}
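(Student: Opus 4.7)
The plan is to build the diagram in three stages using the material already in hand, and then identify $C$ via the third column. First, recall from Lemma \ref{quotient} the cofibration sequence $\bigvee_{i=1}^{r}S^{q}_{i}\xrightarrow{j}M\xrightarrow{q}W$ together with a homotopy equivalence $W\simeq \bigvee_{i=1}^{r}S^{n}_{i}\vee S^{n+q}$ chosen so that its composition with the pinch onto $\bigvee_{i=1}^{r}S^{n}_{i}$ recovers the map $p$. Extending this cofibration to the Puppe sequence yields the middle row of \eqref{d:deltadiag}, since $\Sigma\bigvee_{i=1}^{r}S^{q}_{i}\cong \bigvee_{i=1}^{r}S^{q+1}_{i}$. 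The connecting map $\bigvee_{i=1}^{r}S^{n}_{i}\vee S^{n+q}\to \bigvee_{i=1}^{r}S^{q+1}_{i}$ will be denoted $\delta$.

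Next, I would construct the top row as the trivial cofibration $\ast\to S^{n+q}\xrightarrow{=}S^{n+q}\to \ast$ and map it vertically into the middle row via the inclusion of the summand $S^{n+q}\hookrightarrow \bigvee_{i=1}^{r}S^{n}_{i}\vee S^{n+q}$. Commutativity of the top-left square is automatic. Taking horizontal cofibres of the vertical maps produces the bottom row. The cofibre of $S^{n+q}\hookrightarrow\bigvee_{i=1}^{r}S^{n}_{i}\vee S^{n+q}$ is the pinch $\bigvee_{i=1}^{r}S^{n}_{i}\vee S^{n+q}\to\bigvee_{i=1}^{r}S^{n}_{i}$; using Lemma \ref{quotient}, its composite with $M\to \bigvee_{i=1}^{r}S^{n}_{i}\vee S^{n+q}$ is precisely $p$. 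This gives the bottom row $M\xrightarrow{p}\bigvee_{i=1}^{r}S^{n}_{i}\to C\to \Sigma M$, defining $C$ as the cofibre of $p$. The third column is then a cofibration $S^{n+q}\xrightarrow{\beta}\bigvee_{i=1}^{r}S^{q+1}_{i}\to C$, where $\beta$ is the restriction of $\delta$ to the $S^{n+q}$ summand.

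For the homotopy type of $C$, I would argue that $\beta$ is precisely (up to a self-equivalence of $\bigvee_{i=1}^{r}S^{q+1}_{i}$) the map $g:S^{n+q}\to \bigvee_{i=1}^{r}S^{q+1}_{i}$ produced in Lemma \ref{l:attachingcw} and used in the proof of Proposition \ref{p:Msuspension}. Indeed, by the proof of Proposition \ref{p:Msuspension} the suspension $\Sigma M$ splits off the $\bigvee_{i=1}^{r}S^{n+1}_{i}$ factor using the cross-sections $s_{i}$, leaving a cofibre identified with the cofibre of $g$. Since the bottom row of \eqref{d:deltadiag} continues to $\bigvee_{i=1}^{r}S^{n}_{i}\to C\to \Sigma M\xrightarrow{\Sigma p}\bigvee_{i=1}^{r}S^{n+1}_{i}$, the space $C$ is identified with the complement of the $\bigvee_{i=1}^{r}S^{n+1}_{i}$-summand in $\Sigma M$. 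Combining with the splitting obtained in Proposition \ref{p:Msuspension} gives $C\simeq \bigvee_{i=1}^{r-\bar t}S^{q+1}_{i}\vee \Sigma Y_{\mathcal F}$.

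The step I expect to be the main obstacle is the compatibility check in the previous paragraph, namely verifying that the connecting map $\beta$ appearing in the third column of \eqref{d:deltadiag} agrees, up to the action of the self-equivalences $\theta_{D_{1},D_{2}}$ of Lemma \ref{l:selfeqwedge}, with the row-echelon form map $g$ produced in Lemma \ref{l:attachingcw}. This requires tracing the attaching map of the top cell of $M$ through Ishimoto's description (Theorem \ref{ish}) and through the identification $W\simeq \bigvee_{i=1}^{r}S^{n}_{i}\vee S^{n+q}$, verifying that Whitehead products are killed upon suspension (so that $\beta$ lives in the relevant direct summand of $\pi_{n+q}(\bigvee_{i=1}^{r}S^{q+1}_{i})$), and then applying Lemma \ref{l:matrixech} to bring the corresponding matrix into row echelon form; the resulting $\bar t$ non-zero rows account for the $r-\bar t$ trivial $S^{q+1}$ wedge summands in $C$, with the remaining cofibre being exactly $\Sigma Y_{\mathcal F}$.
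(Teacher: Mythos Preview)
Your proposal is correct and follows essentially the same route as the paper. The paper constructs the diagram exactly as you describe (via Lemma \ref{quotient} and vertical cofibres), and then carries out the ``compatibility check'' you flag as the main obstacle by writing down two further commutative diagrams of cofibrations: one built from $\Sigma\varphi$ that identifies the cofibre of the section $\Sigma s:\bigvee_{i=1}^{r}S^{n+1}_{i}\hookrightarrow\Sigma M$ with $\bigvee_{i=1}^{r-\bar t}S^{q+1}_{i}\vee\Sigma Y_{\mathcal F}$, and a second one comparing the cofibration $\bigvee_{i=1}^{r}S^{n+1}_{i}\xrightarrow{\Sigma s}\Sigma M\xrightarrow{c}\bigvee_{i=1}^{r-\bar t}S^{q+1}_{i}\vee\Sigma Y_{\mathcal F}$ with the bottom row of \eqref{d:deltadiag} to conclude $C\simeq\bigvee_{i=1}^{r-\bar t}S^{q+1}_{i}\vee\Sigma Y_{\mathcal F}$; this replaces your informal ``complement'' cancellation by an explicit identification of cofibres using the \emph{same} section $\Sigma s$ on both sides.
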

\begin{proof} 
By Lemma \ref{quotient}, the inclusion of the $q$-skeleton into $M$ induces the following homotopy commutative diagram
\begin{equation}\label{csq}
\xymatrix{
\bigvee_{i=1}^rS^{q}\ar[r]^-{i}&M_{}\ar[r]^-{}\ar[d]_-{p}&\bigvee_{i=1}^rS^n_i\vee S^{n+q}\ar[ld]^-{pinch}\\
&\bigvee_{i=1}^rS^n_i&
}
\end{equation} 
We can extend \eqref{csq} to generate a homotopy commutative diagram as the one stated in the lemma, where each column and row is a cofibration sequence, and defining in this way the space $C$. Let $s:\bigvee_{i=1}^rS^n_i\to M_{}$ be a right homotopy inverse of the map $p$. Then $\Sigma p\circ\Sigma s\simeq Id$ and therefore the map $$\psi:\bigvee_{i=1}^rS^{n+1}_i\vee C\xrightarrow{\Sigma s+b} \Sigma M,$$ where $b$ is the connecting map of the cofibration induced by the projection $\pi$, is a homotopy equivalence. By Proposition \ref{p:Msuspension} there is a homotopy equivalence
\begin{equation}\label{eq:sus}
\theta:\Sigma M\xrightarrow\simeq \bigvee_{i=1}^rS^{n+1}_i\vee \bigvee_{i=1}^{r-\bar t}S^{q+1}_{i}\vee \Sigma Y_{\mathcal F},
\end{equation}
where $\bar t=min\{r,rk (N_{\mathcal F})\}$  and $\Sigma Y_{\mathcal F}$ is the cofibre of the map $\bigvee_{i=1}^d S^{n+1}_i\vee \bigvee_{j=1}^{r-\bar t}S^{q+1}_j\hookrightarrow \Sigma M$. Let $h=pinch\circ\Sigma\varphi$. The suspension of the attaching map generates a homotopy commutative diagram of cofibrations
\begin{equation}
\xymatrix{
{*}\ar[r]\ar[d]&\bigvee_{i=1}^rS^{n+1}_i\ar@{=}[r]\ar[d]&\bigvee_{i=1}^rS^{n+1}_i\ar[d]^-{\Sigma s}\\
S^{n+q}\ar[r]^-{\Sigma\varphi}\ar@{=}[d]&\bigvee_{i=1}^r(S^{n+1}_i\vee S^{q+1}_i)\ar[r]\ar[d]^-{pinch}&\Sigma M \ar[d]^-{c}\\
S^{n+q}\ar[r]^-{h_{}}&\bigvee_{i=1}^rS^{q+1}_i\ar[r]^-{}&\bigvee_{i=1}^{r-\bar t}S^{q+1}_{i}\vee\Sigma Y_{\mathcal F}
}
\end{equation}
which defines the map $c$.
The homotopy commutative diagram of cofibrations
\begin{equation}
\xymatrix{
{*}\ar[r]\ar[d]&C\ar@{=}[r]\ar[d]^-{}&C\ar[d]^-{}\\
\bigvee_{i=1}^rS^{n+1}_i\ar[r]^-{\Sigma s}\ar@{=}[d]&\Sigma M\ar[r]^-c\ar[d]^-{\Sigma p}&\bigvee_{i=1}^{r-\bar t}S^{q+1}_{j}\vee\Sigma Y_{\mathcal F} \ar[d]^-{}\\
\bigvee_{i=1}^r{ S_i^{n+1}}\ar[r]^-{\simeq}&\bigvee_{i=1}^r{ S_i^{n+1}}\ar[r]^-{}&{*}
}
\end{equation}
shows that there is a homotopy equivalence $C\xrightarrow\simeq\bigvee_{i=1}^{r-\bar t}S^{q+1}_{j} \vee\Sigma Y_{\mathcal F}$.
 \end{proof}

Let $M=\sharp_{i=1}^dM_i$, $d\geq 2$, be a manifold  where each summand $M_i$ is an $S^q$-bundle over $S^n$, $n=2k$, $q=2k'-1$, $2\leq k'\leq k$, with $\chi(M_i)=i_*\xi_i$, where $\xi_i\in\pi_{n-1}(SO(q))$. Suppose $G$ is a Lie group satisfying one of the conditions of Corollary \ref{cor:pringb}. By Propositions \ref{p:pb_gen} and \ref{classext} the map $p: M \to \bigvee_{i=1}^nS^n_i$ induces a map in path components $$p^*:\pi_0({\rm{Map}}(\bigvee_{i=1}^r S^n_i,BG)=\oplus_{i=1}^r\pi_{n-1}(G)\to \pi_0{\rm{Map}}(M,BG)=[M,BG]$$ which is a bijection. Also by Lemma \ref{l:diagram}, the map $p$ induces a homotopy fibration sequence
 \begin{equation}\label{f:indp}
{\rm Map}_*(\bigvee_{i=1}^{r-\bar t}S^{q}_{i} \vee\Sigma Y_{\mathcal F} ,BG)\xrightarrow{q^*}{\rm Map}_*(\bigvee_{i=1}^rS^n_i,BG)\xrightarrow{p^*}{\rm Map}_*(M,BG).
\end{equation}
Restricting \eqref{f:indp} to the connected component indexed by $K=(k_1,\dots,k_r)\in\mathbb Z^r$ we obtain the following fibration sequence
  \begin{equation}\label{f:indpK}
F^K\xrightarrow{q^*}\prod_{i=1}^r\Omega^{n-1}_{k_i}G\xrightarrow{p^*}{\rm Map}_*^K(M,BG),
\end{equation}
where we have identified $\prod_{i=1}^r\Omega^{n-1}_{k_i}G$ with ${\rm Map}^K_*(\bigvee_{i=1}^rS^n_i,BG)$.
\begin{lemma}\label{hofibre}
 Let $F^K$ be the homotopy fibre of 
$$p^*:\prod_{i=1}^r\Omega_{k_i}^{n-1}G \rightarrow {\rm{Map}}^K_*(M,BG),$$ $K=(k_1,\dots,k_r)\in\mathbb Z^r$. 
There are homotopy equivalences $$F^K\simeq\left(\prod_{i=1}^{r-\bar t}\Omega^{q}G\right)\times{\rm{Map}}_*(Y_{\mathcal F},G).$$
\end{lemma}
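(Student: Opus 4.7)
My plan is to apply the functor $\mathrm{Map}_*(-, BG)$ to the cofibration $M \xrightarrow{p} \bigvee_{i=1}^r S^n_i \to C$ from Lemma \ref{l:diagram}, which yields a homotopy fibration sequence
\[
\mathrm{Map}_*(C, BG) \to \mathrm{Map}_*\bigl(\bigvee_{i=1}^r S^n_i, BG\bigr) \xrightarrow{p^*} \mathrm{Map}_*(M, BG).
\]
First I would identify the fibre using the equivalence $C \simeq \bigvee_{i=1}^{r-\bar t} S^{q+1}_i \vee \Sigma Y_{\mathcal F}$ provided by the same lemma, together with the standard identifications $\mathrm{Map}_*(A \vee B, X) \simeq \mathrm{Map}_*(A, X) \times \mathrm{Map}_*(B, X)$, $\mathrm{Map}_*(\Sigma Y, X) \simeq \mathrm{Map}_*(Y, \Omega X)$, and $\Omega BG \simeq G$, to obtain
\[
\mathrm{Map}_*(C, BG) \simeq \prod_{i=1}^{r-\bar t} \Omega^q G \times \mathrm{Map}_*(Y_{\mathcal F}, G).
\]

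Next I would restrict the fibration to $K$-th path components. Because $p^*$ is a bijection on $\pi_0$ (by Propositions \ref{p:pb_gen} and \ref{classext}), the preimage of the component $\mathrm{Map}^K_*(M, BG)$ is exactly $E_K = \prod_{i=1}^r \Omega^{n-1}_{k_i} G$, and by definition the homotopy fibre of the restricted map $p^*|_K: E_K \to \mathrm{Map}^K_*(M, BG)$ is $F^K$.

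The remaining step is to identify $F^K$ with $\mathrm{Map}_*(C, BG)$ for every $K \in \mathbb{Z}^r$. For $K = 0$ this is immediate from the fibration sequence, since the homotopy fibre over the basepoint is precisely $\mathrm{Map}_*(C, BG)$. For $K \neq 0$, I would exploit the fact that $E = \mathrm{Map}_*(\bigvee_{i=1}^r S^n_i, BG)$ is an $H$-group (because $\bigvee S^n_i$ is a co-$H$-space): left multiplication by any lift $\phi_K \in E_K$ of the class $K$ gives a homotopy equivalence $L_{\phi_K}: E_0 \xrightarrow{\simeq} E_K$, and this equivalence should transfer the identification of fibres from the case $K = 0$ to general $K$, yielding $F^K \simeq F^0 \simeq \mathrm{Map}_*(C, BG)$.

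\emph{The main obstacle} will be ensuring that this translation is genuinely compatible with the fibration $p^*$, since the base $\mathrm{Map}_*(M, BG)$ is not itself an $H$-space. I expect to handle this using the homotopy retraction $s^*: \mathrm{Map}_*(M, BG) \to E$ induced by the cross-sections $s_i: S^n_i \to M_i$ of the sphere bundles (which yields $s^* \circ p^* \simeq \mathrm{id}_E$), combined with the long exact sequence of the fibration: the splitting $\pi_\ast(B_K) \cong \pi_\ast(E_K) \oplus \pi_{\ast-1}(F^K)$ forced by this retraction, together with the $H$-space identifications $\pi_\ast(E_K) \cong \pi_\ast(E_0)$, will allow me to conclude that $\pi_\ast(F^K) \cong \pi_\ast(\mathrm{Map}_*(C, BG))$ independently of $K$, which together with the naturality of $L_{\phi_K}$ promotes the $\pi_\ast$-isomorphism to the required homotopy equivalence.
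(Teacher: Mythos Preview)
Your opening strategy is sound and overlaps with the paper's: apply $\mathrm{Map}_*(-,BG)$ to the cofibration data in Lemma~\ref{l:diagram}, then identify the fibre using $C\simeq\bigvee_{i=1}^{r-\bar t}S^{q+1}_i\vee\Sigma Y_{\mathcal F}$. Your identification of $F^0$ is fine.

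The gap is the passage from $K=0$ to general $K$. Your final paragraph does not produce a \emph{map} $F^0\to F^K$. The translation $L_{\phi_K}$ lives only on the total space $E=\mathrm{Map}_*(\bigvee S^n,BG)$; since the base $\mathrm{Map}_*(M,BG)$ is not an $H$-space, $L_{\phi_K}$ is not compatible with $p^*$ and therefore induces nothing on fibres. What you then extract from the split long exact sequence is only an \emph{abstract} isomorphism $\pi_*(F^K)\cong\pi_*(F^0)$, and ``naturality of $L_{\phi_K}$'' cannot promote this to a homotopy equivalence without an actual map realising it. This step, as written, does not go through.

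The paper circumvents exactly this difficulty by using not just the bottom row of the diagram in Lemma~\ref{l:diagram} but the whole $3\times 3$ grid, which factors $p$ as
\[
M\longrightarrow \bigvee_{i=1}^r S^n_i\vee S^{n+q}\xrightarrow{\ \text{pinch}\ }\bigvee_{i=1}^r S^n_i.
\]
After applying $\mathrm{Map}_*(-,BG)$ one obtains a $3\times 3$ diagram of fibrations in which the middle column has total space $\bigl(\prod_i\Omega^n BG\bigr)\times\Omega^{n+q}BG$. The point is that, under the standing hypotheses on $G$, the extra factor $\Omega^{n+q}BG$ is \emph{connected}; hence the component translation $\theta_K$ on $\prod_i\Omega^n BG$ extends to $\theta_K\times\mathbbm 1$ on the product, and this extended translation \emph{is} compatible with the horizontal maps $i^*$ and $p_2^*$. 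Commutativity of the translated diagram then yields an explicit identification of the fibre over the $K$-th component with the fibre over the basepoint. In short, the factorization through $\bigvee S^n\vee S^{n+q}$ is precisely what supplies the missing map; using only the bottom cofibration $M\to\bigvee S^n\to C$ discards this structure and leaves you with the obstacle you flagged but did not resolve.
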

\begin{proof}
The homotopy commutative diagram of cofibrations of Lemma \ref{l:diagram} induces the following diagram where rows and columns are fibrations 
\begin{equation}\label{d:fib1}
\xymatrix{
\left(\prod_{i=1}^{r-\bar t}\Omega^{q+1}BG\right)\times{\rm{Map}}_*(\Sigma Y_{\mathcal F},BG)\ar[r] \ar[d] & \prod_{i=1}^r\Omega^{q+1}BG \ar[r]^-{\gamma^*} \ar[d]_-{(0+\gamma)^*} & \Omega^{n+q}BG \ar@{=}[d] \\
\left(\prod_{i=1}^r\Omega^nBG\right) \ar[r]^-{i^*_i} \ar[d]_-{p^*} & \left(\prod_{i=1}^r\Omega^nBG\right)\times\Omega^{n+q}BG\ar[r]^-{p_2^*} \ar[d]_-{q^*} & \Omega^{n+q}BG \ar[d] \\
{\rm Map}_*(M,BG) \ar@{=}[r] & {\rm Map}_*(M,BG) \ar[r] & \ast}
\end{equation}
We have done the following identifications $${\rm{Map}}_*(\bigvee_{i=1}^rS^\ell_i,BG)\simeq\prod_{i=1}^r\Omega^\ell BG,$$
$${\rm{Map}}_*(\bigvee_{i=1}^rS^n_i\vee S^{n+q},BG)\simeq\left(\prod_{i=1}^r\Omega^nBG\right)\times\Omega^{n+q}BG,$$ 
$${\rm Map}_*(\bigvee_{i=1}^{r-\bar t}S^{q+1}\vee\Sigma Y_{\mathcal F})\simeq\prod_{i=1}^{r-\bar t}\Omega^{q+1}BG\times{\rm{Map}}_*(\Sigma Y_{\mathcal F},BG).$$ The map $p_2^*$
 is the projection and the map $i^*_i$ is the inclusion. Observe  that $q^*$ induces a bijection in path components since $\pi_{n+q-1}(G)=0$.  Recall that there exist homotopy equivalences between the path components $\Psi_K:\prod_{i=1}^r\Omega^n_0BG\to\prod_{i=1}^r\Omega^n_{k_i}BG$ for all $K=(k_1,\dots,k_r)\in \mathbb Z^r$. These equivalences are defined by \eqref{eq:psii}
Thus the following diagram
\begin{equation}\label{d:fib2}
\xymatrix{
\left(\prod_{i=1}^{r-\bar t}\Omega^{q+1}BG\right)\times{\rm{Map}}_*(\Sigma Y_{\mathcal F},BG)\ar[r] \ar[d] & \prod_{i=1}^r\Omega^{q+1}BG \ar[r]^-{\gamma^*} \ar[d]_-{ (0+\gamma)^*} &  \Omega^{n+q}BG \ar@{=}[d] \\
\prod_{i=1}^r\Omega^n_0BG \ar[r]^-{i^*} \ar[d]_-{\theta_K} &\left(\prod_{i=1}^r\Omega^n_{0}BG\right)\times\Omega^{n+q}BG \ar[r]^-{{p_2}^*} \ar[d]_-{\theta_K\times\mathbbm 1} &\Omega^{n+q}BG \ar@{=}[d] \\
\prod_{i=1}^r\Omega^n_{k_i}BG \ar[r]^-{i^*}  & \left(\prod_{i=1}^r\Omega^n_{k_i}BG\right) \times\Omega^{2n-1}BG \ar[r]^-{{p_2}^*}
&\Omega^{n+q}BG 
}
\end{equation}
homotopy commutates. The homotopy commutativity of \eqref{d:fib2} implies that restricting $q^*$ in \eqref{d:fib1} to the $K$-th component induces a homotopy commutative diagram 
\begin{equation}
\xymatrix{
\left(\prod_{i=1}^{r-\bar t}\Omega^{q}BG\right)\times{\rm{Map}}_*(\Sigma Y_{\mathcal F},BG)\ar[r] \ar[d] & \prod_{i=1}^r\Omega^{q+1}BG \ar[r]^-{\gamma^*} \ar[d]_-{(\theta_K\times\mathbbm1) \circ (0+\gamma)^*} &  \Omega^{n+q}BG \ar@{=}[d] \\
\prod_{i=1}^r\Omega^n_{k_i}BG \ar[r]^-{i^*} \ar[d]_-{p^*} & \left(\prod_{i=1}^r\Omega^n_{k_i}BG\right) \times\Omega^{n+q}BG \ar[r]^-{{p_2}^*} \ar[d]_-{q^*} &\Omega^{n+q}BG \ar[d] \\
{\rm Map}_*^K(M,BG) \ar@{=}[r] & {\rm Map}_*^K(M,BG) \ar[r]& \ast
}
\end{equation}
Hence, for all $K\in\mathbb Z^r$,  there are homotopy equivalences
$$F^K \simeq \left(\prod_{i=1}^{r-\bar t}\Omega^{q+1}BG\right)\times{\rm{Map}}_*(\Sigma Y_{\mathcal F},BG)\simeq\left(\prod_{i=1}^{r-\bar t}\Omega^{q}G\right)\times{\rm{Map}}_*(Y_{\mathcal F},G)$$
as required.
\end{proof}

The existence of homotopy equivalences among the connected components of ${\rm{Map}}_*(M,BG)$ under our assumptions on $G$ and $M$ is an open problem. Yet we can give a result on the homotopy types of their loop spaces and, therefore, on the pointed gauge groups over manifolds $M$. 

\begin{theorem}\label{t:PGG}
Let $M=\sharp_{i=1}^rM_i$ be a manifold where each factor $M_i$ is a $S^q$-bundles over $S^n$ with cross sections. Suppose that one of the following holds:
\begin{enumerate}
\item $G=SU(m)$, $n=2k$, $q=2k'-1$, $2 \leq k' \leq k$;
\item $G=Sp(m)$, $n=4k$, $q=4k'-1$, $1 \leq k' \leq k$.
\end{enumerate}

Then for all $K\in \mathbb{Z}^r$
there is a homotopy equivalence $$\mathcal{G}^K_*(M)\simeq\prod_{i=1}^r\Omega^4 G\times\left(\prod_{i=1}^{r-\bar t}\Omega^{q}G\right)\times{\rm{Map}}_*(Y_{\mathcal F},G).$$  

Moreover, if $k=k'=2$  and $G$ is a simply connected simple compact Lie group then the equivalence holds whenever $\gcd (|\pi_6(G))|,\xi^1,\dots,\xi^r)=1$, where $\xi^i\in \pi_3(SO(3))$, $1\leq i\leq r$, is the map that classifies the manifold $M_i$.
\end{theorem}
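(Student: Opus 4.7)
The plan is to deduce the decomposition from the cofibration $M \xrightarrow{p} \bigvee_{i=1}^r S^n_i \to C$ of Lemma \ref{l:diagram} by combining it with the fact that $p$ admits a section. Indeed, each summand $M_i \to S^n_i$ comes with a cross section $s_i$, and these assemble into a pointed section $s \colon \bigvee_{i=1}^r S^n_i \to M$ with $p \circ s \simeq \mathbbm{1}$. This section will be the crucial structural input that forces the associated fibration of mapping spaces to split as a product.

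First I would apply ${\rm Map}_*(-, BG)$ to the cofibration and pass to the extended Puppe sequence; restricting to the component indexed by $K = (k_1,\dots,k_r)$ and invoking Lemma \ref{hofibre} produces the fibration sequence
\begin{equation*}
\textstyle\prod_{i=1}^r \Omega^n G \xrightarrow{\Omega p^*} \mathcal{G}^K_*(M) \longrightarrow F^K \longrightarrow \prod_{i=1}^r \Omega^{n-1}_{k_i} G \xrightarrow{p^*} {\rm Map}_*^K(M, BG),
\end{equation*}
where $F^K \simeq \prod_{i=1}^{r-\bar t} \Omega^q G \times {\rm Map}_*(Y_{\mathcal F}, G)$. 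Consecutive pairs give fibrations, so in particular $\prod \Omega^n G \to \mathcal{G}^K_*(M) \to F^K$ is a fibration whose fiber-inclusion is $\Omega p^*$.

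Next I would exploit the relation $p \circ s \simeq \mathbbm{1}$, which yields $s^* \circ p^* \simeq \mathbbm{1}$ on mapping spaces, and hence after looping a retraction $\Omega s^* \colon \mathcal{G}^K_*(M) \to \prod \Omega^n G$ of the fiber-inclusion $\Omega p^*$. The standard argument that a fibration whose fiber-inclusion admits a retraction is trivialised then applies: the comparison map
\[
(\Omega s^*, \mathrm{proj}) \colon \mathcal{G}^K_*(M) \longrightarrow \textstyle\prod_{i=1}^r \Omega^n G \times F^K
\]
is a map of fibrations over $F^K$ restricting to the identity on fibers, and so induces an isomorphism on all homotopy groups by the five-lemma. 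Since all spaces involved are $H$-spaces with the homotopy type of CW-complexes, this weak equivalence is a homotopy equivalence, and substituting the explicit form of $F^K$ yields the claimed splitting.

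For the final assertion, the same argument works verbatim with $G$ a simply connected simple compact Lie group, provided the underlying bundle classification is available. This is precisely where Proposition \ref{classext} would replace Proposition \ref{p:pb_gen}: the hypothesis $\gcd(|\pi_6(G)|, \xi^1, \ldots, \xi^r) = 1$ is exactly what ensures that $p^* \colon \bigoplus_{i=1}^r \pi_3(G) \to [M, BG]$ remains a bijection in the $n = 4$, $q = 3$ case, after which the rest of the argument goes through unchanged. The main subtlety to check is that the component-indexing by $K$ is preserved throughout the Puppe sequence and the retraction construction; this reduces to the $K$-independence of $F^K$ (Lemma \ref{hofibre}) together with the fact that both $p^*$ and $s^*$ respect the bijection $[\bigvee_{i=1}^r S^n_i, BG] \cong [M, BG]$ furnished by the bundle classification.
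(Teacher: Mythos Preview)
Your proposal is correct and follows essentially the same route as the paper: loop the fibration of Lemma \ref{hofibre} to obtain $\prod_{i=1}^r\Omega^n G \xrightarrow{\Omega p^*} \mathcal{G}^K_*(M) \to F^K$, use the section $s$ of $p$ to produce a retraction $\Omega s^*$ of the fibre inclusion $\Omega p^*$, and conclude that the fibration splits; the $n=4$, $q=3$ case is handled identically via Proposition \ref{classext} in place of Proposition \ref{p:pb_gen}. Your explicit mention of the five-lemma and the component-tracking is a slight elaboration, but the argument is the same.
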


 \begin{proof}
Let $\mathcal{G}^K_*(M_{})$ be the pointed  gauge group of the principal $G$-bundle over $M$ classified by $K\in\mathbb{Z}^r.$ 
By Lemma \ref{hofibre} the there is a homotopy fibration sequence induced by $p^*_K$
\begin{equation}\label{eq:fibpcom}
\left(\prod_{i=1}^{r-\bar t}\Omega^{q}G\right)\times{\rm{Map}}_*(Y_{\mathcal F},G)\to {\rm{Map}}_*^K(\bigvee_{i=1}^rS^n_i,BG){\xrightarrow{p_K^*}}{\rm{Map}}_*^K(M,BG).
\end{equation}
Consider the following fibration sequence
\begin{equation}\label{f:ls3inc}
 \xymatrix{
\Omega {\rm{Map}}_*^K(\bigvee_{i=1}^rS^n_i,BG)\ar[r]^-{\Omega p_K^{*}}&\Omega{\rm{Map}}_*^K(M_{},BG)\ar[r]&\left(\prod_{i=1}^{r-\bar t}\Omega^{q}G\right)\times{\rm{Map}}_*(Y_{\mathcal F},G).
}\end{equation}
For each $1\leq i\leq r $ let $s_i:M\to S^n_i$ be a cross section of the bundle $M_i\to S^n$. The  map $p:M\xrightarrow{pinch}\bigvee_{i=1}^rM_i\xrightarrow{\bigvee_{i=1}^r\pi_i}\bigvee_{i=1}^rS^n_i$ has a right homotopy inverse, namely the composite 
$$d:\bigvee_{i=1}^r S^n_i\hookrightarrow \bigvee^r_{i=1} S_i^q\vee\bigvee^r_{i=1} S_i^n\xrightarrow{g} M$$
where $g:\bigvee^r_{i=1} S_i^q\vee\bigvee^r_{i=1} S_i^n  \rightarrow  M$ be defined so that $S^q_i\hookrightarrow \bigvee^r_{i=1} S_i^q\vee\bigvee^r_{i=1} S_i^n  \xrightarrow g M$ is the inclusion of $S^q_i$ and $S^n_i\hookrightarrow \bigvee^r_{i=1} S_i^q\vee\bigvee^r_{i=1} S_i^n  \xrightarrow g M$ is the map $s_i$. Thus the diagram
\begin{equation}\label{d:section}
\xymatrix{
\bigvee_{i=1}^rS^n_i\ar[r]^-d\ar@{=}[rd]&M_{}\ar[d]^-p\\
&\bigvee_{i=1}^rS^n_i
}
\end{equation}
homotopy commutes. It follows that there is a homotopy commutative diagram
\begin{equation}\label{d:indsection}
\xymatrix{
{\rm{Map}}_*(\bigvee_{i=1}^rS^n_i,BG)\ar@{=}[rd]&{\rm{Map}}_*(M,BG)\ar_-{d^{*}}[l]\\
&{\rm{Map}}_*(\bigvee_{i=1}^rS^n_i,BG)\ar[u]_-{p^*},
}
\end{equation}
and indeed, we obtain a similar diagram for the restriction to the $K$-th component. It follows that the $(\Omega d_k)^*\circ(\Omega p_k)^*\simeq Id$ which impliest that the homotopy fibration \eqref{f:ls3inc} splits and there is a homotopy equivalence
\begin{equation}\label{e:decom_pgg1}
\Omega {\rm{Map}}^K_*(M_{},BG)\simeq\Omega{\rm{Map}}_*^K(\bigvee_{i=1}^rS^4_i,BG)\times\left(\prod_{i=1}^{r-\bar t}\Omega^{q}G\right)\times{\rm{Map}}_*(Y_{\mathcal F},G).
\end{equation}
If $n=4$ and  $\Omega {\rm{Map}}_*^K(\bigvee_{i=1}^rS^4_i,BG)\simeq\prod_{i=1}^r\Omega^4 G$ and $\mathcal{G}_{*}^k(M)\simeq \Omega{\rm{Map}}_*^K(M,BG)$. Using this identifications along with the homotopy equivalence \eqref{e:decom_pgg1} we obtain $$\mathcal{G}^K_*(M)\simeq\prod_{i=1}^r\Omega^4 G\times\left(\prod_{i=1}^{r-\bar t}\Omega^{q}G\right)\times{\rm{Map}}_*(Y_{\mathcal F},G).$$  If $n=4$, $G$ is a simply connected simple compact Lie group and $\gcd (|\pi_6(G))|,\xi^1,\dots,\xi^r)=1$, where $\xi^i\in \pi_3(SO(3))$ is the map that classifies the manifold $M_i$, then the gauge groups over $M$ are also classified by $K\in\mathbb Z^r$. Arguing in a similar manner as for the previous case, we obtain a homotopy equivalence as in \eqref{e:decom_pgg1}.\end{proof}

The result of Theorem \ref{t:PGG} simplifies the computations of the homotopy groups of the pointed gauge groups.

\begin{cor}\label{c:pgghg}
For all $K\in\mathbb Z^r$ and for all $j\geq0$ there are isomorphisms
$$\pi_j(\mathcal G^K_*(M))\cong\bigoplus_{i=1}^r \pi_{j+n}(G)\oplus\bigoplus_{i=1}^{r-\bar t}\pi_{j+q}(G)\oplus\pi_j({\rm{Map}}_*(Y_{\mathcal F},G)).$$
\qed
\end{cor}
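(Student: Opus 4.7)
The plan is to derive Corollary \ref{c:pgghg} as a direct consequence of the homotopy decomposition established in Theorem \ref{t:PGG}. Specifically, under the hypotheses in force throughout this section, Theorem \ref{t:PGG} provides a homotopy equivalence
\begin{equation*}
\mathcal{G}^K_*(M)\simeq\prod_{i=1}^r\Omega^{n} G\times\Bigl(\prod_{i=1}^{r-\bar t}\Omega^{q}G\Bigr)\times{\rm{Map}}_*(Y_{\mathcal F},G),
\end{equation*}
for every $K\in\mathbb Z^r$, so it is enough to read off the homotopy groups of the right-hand side.

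The first step is to apply the functor $\pi_j$ to the decomposition and use the standard fact that homotopy groups of a finite product split as the direct sum of the homotopy groups of the factors. The second step is to invoke the loop-space adjunction isomorphism $\pi_j(\Omega^k X)\cong \pi_{j+k}(X)$ for any pointed space $X$ and any $k\geq 0$, applied with $X=G$ and $k=n$ (for the first $r$ factors) and $k=q$ (for the next $r-\bar t$ factors). The remaining factor ${\rm{Map}}_*(Y_{\mathcal F},G)$ contributes $\pi_j({\rm{Map}}_*(Y_{\mathcal F},G))$ with no further simplification, since the structure of $Y_{\mathcal F}$ depends on the particular family $\mathcal F$.

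Putting these observations together yields
\begin{equation*}
\pi_j(\mathcal G^K_*(M))\cong\bigoplus_{i=1}^r \pi_{j+n}(G)\oplus\bigoplus_{i=1}^{r-\bar t}\pi_{j+q}(G)\oplus\pi_j({\rm{Map}}_*(Y_{\mathcal F},G)),
\end{equation*}
which is exactly the claimed isomorphism. There is no real obstacle here; the substance of the corollary lies entirely in Theorem \ref{t:PGG}, and this statement simply records the consequence at the level of homotopy groups.
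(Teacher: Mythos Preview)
Your proposal is correct and matches the paper's approach: the corollary is stated with a \qed and no proof, so the paper treats it as an immediate consequence of Theorem \ref{t:PGG}, exactly as you argue. Your write-up simply makes explicit the two standard facts the paper leaves implicit---that $\pi_j$ takes finite products to direct sums and that $\pi_j(\Omega^k G)\cong\pi_{j+k}(G)$---and there is nothing more to it. (You have also silently corrected the $\Omega^4 G$ in the statement of Theorem \ref{t:PGG} to $\Omega^n G$, which is the form needed to match the $\pi_{j+n}(G)$ in the corollary; this is clearly what is intended given the general hypotheses and the identification $\Omega\,\mapp^K(\bigvee_{i=1}^r S^n_i,BG)\simeq\prod_{i=1}^r\Omega^n G$ used in the proof of the theorem.)
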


Consider the evaluation fibration 
\begin{equation}
{\rm{Map}}_{K}^*{(M,BG)}\rightarrow{\rm{Map}}_{K}(M,BG)\overset{ev_K}{\longrightarrow}BG,
\end{equation}

The evaluation map $ev$ is natural, and by Proposition \ref{classext}, the map $p:M\rightarrow \bigvee^d_{i=1} S^4$ makes the homotopy fibration diagram
\begin{equation}\label{d:eval}
\xymatrix@=0.3in{ 
G\ar[r]^-{\phi_{K}}\ar@{=}[d]&\mapp^K({\bigvee_{i=1}^r}{S^n_i},BG)\ar[r]\ar[d]^-{p^{*}}&
\map^{K}({\bigvee}_{i=1}^rS^n_i,BG)\ar[r]^-{ev_K}\ar[d]^-{p^{*}}&BG\ar@{=}[d]\\
G\ar[r]^-{\partial_{K}}&\mapp^K(M,BG)\ar[r]&{\rm{Map}}^{K}(M,BG)
\ar[r]^-{ev_K}
&BG
}
\end{equation}
homotopy commute. Now we are ready to give a proof of Theorem \ref{t:UGG}  .

\begin{proof}[Proof of Theorem \ref{t:UGG}]
The left square in \eqref{d:eval} induces a homotopy commutative diagram 
\begin{equation}\label{d:ugg}
\xymatrix@=0.3in{
&{\rm{Map}}_*^{K}(\Sigma M,BG)\ar@{=}[r]\ar[d]&{\rm{Map}}_*^{K}(\Sigma M,BG)\ar[d]^{\delta^*}\\
\mathcal{G}^{K}(\bigvee_{i=1}^rS^n_i )\ar[r]\ar@{=}[d]&\mathcal G_{K}(M)\ar[r]^-{\zeta}\ar[d]&
\left(\prod_{i=1}^{r-\bar t}\Omega^{q}G\right)\times{\rm{Map}}_*(Y_{\mathcal F},G)\ar[d]^{g^{*}}\\
\mathcal{G}^{K}(\bigvee_{i=1}^rS^n_i)\ar[r]&G\ar[r]^-{\phi_{K}}  \ar[d]^-{\partial_K} &{\rm{Map}}^{K}_{*}(\bigvee_{i=1}^rS^n_i,BG) \ar[d]^-{p^*} \\
& {\rm{Map}}_{*}^{K}(M,BG) \ar@{=}[r] & {\rm{Map}}_{*}^{K}(M,BG)
}
\end{equation}
which defines the map $\zeta$. Since the map $\delta^*$ has a right homotopy inverse, so does the map $\zeta$. We use the group structure on $\mathcal{G}_k(M)$  to obtain a homotopy equivalence
\begin{equation}\label{e:partialeqgg}
\mathcal{G}_k(\bigvee_{i=1}^rS^n_i)\times\left(\prod_{i=1}^{r-\bar t}\Omega^{q}G\right)\times{\rm{Map}}_*(Y_{\mathcal F},G)\rightarrow \mathcal{G}_k(M)\times \mathcal{G}_k(M)\rightarrow \mathcal{G}_k(M).
\end{equation}
The homotopy splitting \eqref{decomp} follows from the homotopy equivalence  \eqref{e:partialeqgg} and Proposition \ref{p:wedgegg}. For the second part of the theorem, if $n = 4$, $q = 3$, using Proposition \ref{p:pb_gen} and Therorem \ref{t:PGG} we obtain a homotopy commutative diagram as in \eqref{d:ugg} whenever $(|\pi_6(G)|, \xi^1,\dots,\xi^r)=1$, $\xi^i\in\pi_3(SO(3))\cong\mathbb Z$ for any simply connected simple compact Lie group $G$. In this case $\bar t=1$ and $S^6\xrightarrow{\alpha}S^3$ is a unit in $\pi_6(S^3)\cong\mathbb Z_{12}$. Similar arguments show there is also a homotopy equivalence as in \ref{e:partialeqgg}. Proposition  \ref{p:wedgegg} completes the proof.

\end{proof}

\begin{remark}
Given $K\in\mathbb Z^r$, the evaluation map induces the following exact sequences \begin{equation}\label{e:hgev}
\cdots\rightarrow\pi_j(\mathcal G^K_*(M))\xrightarrow{i^*}\pi_j(\mathcal G^K(M))\xrightarrow{ev^*}\pi_j(G)\rightarrow\cdots
\end{equation}
For any simply connected simple compact Lie group we have $\pi_n(G)=0$ for $n\leq 2$. This implies that if $n\leq2$ then $i^{*}$ is an isomorphism. We can use these isomorphisms and Corollary 5.4 to compute the path components of unpointed gauge groups $\mathcal G^K(M)$. For example, let $n=4$, $q=3$ and $M=\sharp_{i=1}^{r} M_{\xi^i}$ with characteristic elements $\xi^i\in\pi_3(SO(3))\cong Z$. If $\xi^1\equiv 1\pmod {12}$ and $\xi^i\equiv0\pmod {12}$, for all $2\leq i\leq r$, then a similar argument to the one given in the proof of Lemma \ref{l:coker}  shows that $\pi_0(\map_*(Y_{\mathcal F},BG))=[Y_{\mathcal F},BG]=0$. We use information of the homotopy groups of Lie groups as given in \cite{Jam2} to obtain:
\begin{equation*}
\pi_0(\mathcal G^k(M))=\begin{cases}
\bigoplus_{i=1}^r\mathbb Z_2\oplus\bigoplus_{i=1}^{r-1}\mathbb Z& G=SU(2),Sp(n)(n\geq2),Spin(5)\\
\bigoplus_{i=1}^{r-1}\mathbb Z & G=SU(n)(n\geq 3), Spin(m)(m \geq 6)\\
\bigoplus_{i=1}^{r-1}\mathbb Z& G=G_2,F_4,E_6,E_7,E_8.
\end{cases}
\end{equation*}
\end{remark}
 The connecting map 
$$G\xrightarrow{\bar\delta_1}\Omega^{n-1}_1G\longrightarrow{\rm{Map}}^1(S^n,BG)\xrightarrow{ev} BG$$  
    determines upper bounds in the number of homotopy types of $\mathcal{G}^K(M_{})$. For example, it is known that the order of the connecting map  on $SU(2)$-gauge groups over $S^4$ is 12. Using this result along with some exact sequences of homotopy groups, we obtain a classification result for manifolds $M=\sharp_{i=1}^r M_{\xi^i}$, where each summand $M_{\xi^i}$ is an $S^3$-bundle over $S^4$, $\xi^i\in\pi_3(SO(3))\cong\mathbb Z$ and  $(12,\xi^1,\dots,\xi^r)=1$. These kinds of manifolds include, for instance, the connected sums $M=S^3\tilde\times_{\xi^1}S^4\,\sharp\,(\sharp_{i=2}^r S^3\times S^4)$, where $S^3\tilde\times_{\xi^1}S^4$ is a twisted product and the class of $\xi^1$ mod 12 is a unit. 

\begin{cor}\label{t:cor}
Let $M=\sharp_{i=1}^r M_{\xi^i}$ be a manifold where each factor $M_i$ is an $S^{3}$-bundle over $S^{4}$ with a cross section, $G = SU(2)\cong S^3$ and $\gcd(12,\xi^1,\dots,\xi^r)=1$. Given $K,K'\in\mathbb Z^r$ with $K=(k_1,\dots,k_r)$ and $K'=(k'_1,\dots,k'_r)$, there is a homotopy equivalence $\mathcal{G}^K(M)\simeq\mathcal{G}^{K'}(M)$ if and only if $\gcd(12,k_1,\dots,k_r)=\gcd(12,k'_1,\dots,k'_r).$
\end{cor}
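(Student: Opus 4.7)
The plan is to reduce the classification to Kono's theorem for $SU(2)$-gauge groups over $S^4$ via the homotopy decomposition provided by Theorem~\ref{t:UGG}. Since $\pi_6(SU(2))\cong\mathbb{Z}_{12}$, the hypothesis $\gcd(12,\xi^1,\ldots,\xi^r)=1$ is precisely the condition appearing in the second part of Theorem~\ref{t:UGG}. Applying that theorem with $n=4$, $q=3$, $G=SU(2)$, and using the classical fact (Kono) that the order of the connecting map $\delta_1$ for $SU(2)$-bundles over $S^4$ equals $12$, one obtains
\[
\mathcal G^K(M)\;\simeq\;\mathcal G^{\ell(K)}(S^4)\times P,
\]
where $\ell(K)=\gcd(12,k_1,\ldots,k_r)$ and
\[
P=\prod_{i=1}^{r-1}\Omega^4 SU(2)\times \prod_{j=1}^{r-1}\Omega^3 SU(2)\times \mathrm{Map}_*(Y_{\mathcal F},SU(2))
\]
is independent of $K$. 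Here $\bar t=1$, and the cofibre $Y_{\mathcal F}$ is constructed from a unit of $\pi_6(S^3)\cong\mathbb Z_{12}$, so it is a fixed space once $G$ is fixed.

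The ``if'' direction is then immediate: if $\gcd(12,K)=\gcd(12,K')$ then $\ell(K)=\ell(K')$, so the decompositions for $K$ and $K'$ agree factor-by-factor and $\mathcal G^K(M)\simeq \mathcal G^{K'}(M)$. For the ``only if'' direction, I would combine this decomposition with Kono's classification, which asserts that $\mathcal G^k(S^4)\simeq \mathcal G^l(S^4)$ if and only if $\gcd(12,k)=\gcd(12,l)$, together with a cancellation argument isolating the $\mathcal G^{\ell(K)}(S^4)$-factor from $P$. Concretely, since $M$ is a finite $CW$-complex and $BSU(2)$ has finitely generated homotopy in every degree, each $\pi_j(\mathcal G^K(M))$ is a finitely generated abelian group. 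Applying $\pi_j$ to the product decomposition yields $\pi_j(\mathcal G^K(M))\cong \pi_j(\mathcal G^{\ell(K)}(S^4))\oplus \pi_j(P)$, and by the Krull--Schmidt theorem for finitely generated abelian groups the summand $\pi_j(\mathcal G^{\ell(K)}(S^4))$ is determined up to isomorphism by $\pi_j(\mathcal G^K(M))$ and the fixed group $\pi_j(P)$. Hence an equivalence $\mathcal G^K(M)\simeq \mathcal G^{K'}(M)$ forces $\pi_j(\mathcal G^{\ell(K)}(S^4))\cong \pi_j(\mathcal G^{\ell(K')}(S^4))$ for every $j$, and the distinguishing torsion invariants in low-degree homotopy used in Kono's classification (e.g.\ in $\pi_3$) then force $\ell(K)=\ell(K')$, which is the claimed identity $\gcd(12,K)=\gcd(12,K')$.

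The main obstacle is the ``only if'' direction, and more precisely the step that isolates the $\mathcal G^{\ell(K)}(S^4)$-factor from the auxiliary space $P$ so that Kono's distinguishing invariants can be applied. Finite generation of the homotopy groups together with Krull--Schmidt is what makes this cancellation rigorous; an equivalent approach would be to argue prime-locally at $p=2$ and $p=3$, using the $p$-local homotopy invariants of $\mathcal G^d(S^4)$ for $d\mid 12$.
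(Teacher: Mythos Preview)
Your proposal is correct and follows the paper's strategy: invoke Theorem~\ref{t:UGG} with $o(\delta_1)=12$ to obtain a decomposition $\mathcal G^K(M)\simeq\mathcal G^{\ell(K)}(S^4)\times P$ with $P$ independent of $K$, making the ``if'' direction immediate. For the ``only if'' direction the paper is slightly more direct than your Krull--Schmidt route: it computes $|\pi_2(\mathcal G^{\lambda(K)}(S^4))|=\lambda(K)$ explicitly from the exact sequence attached to the evaluation fibration, verifies that $\pi_2(P)$ is finite of a fixed order, and then simply compares cardinalities of $\pi_2$. Your cancellation argument is sound (the finite-generation justification is correct), but note that Kono's theorem as a black box is a statement about homotopy equivalence, not about isomorphism of all $\pi_j$, so you are right that one must descend to the actual distinguishing invariant; that invariant lives in $\pi_2$ of the gauge group (equivalently $\pi_3$ of its classifying space), not $\pi_3$ of the gauge group itself.
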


\begin{proof}

By Theorem  \ref{t:UGG},  for all $\tilde K=(\tilde k_1,\dots,\tilde k_r)\in \mathbb Z^r$, there are homotopy equivalences
\begin{equation}\label{e:decomggM}
 \mathcal G^{\tilde K}(M)\simeq \mathcal G^{\lambda(\tilde K)}(S^4)\times \prod_{i=1}^{r-1}\Omega^{4}S^3\times \prod_{j=1}^{r-\bar t}\Omega^{3}S^3\times \mathrm{Map}_*(Y_{\mathcal F},S^3),
 \end{equation}
where $\lambda(\tilde K)=gcd(o(\delta_1),\tilde k_1,\dots,\tilde k_r)$, $r,\bar t\in\mathbb N$ that does not depend on $\tilde K$ and $\delta_1$ is the connecting map of 
the fibration 
\begin{equation}
S^3\xrightarrow{\delta_1}\Omega^{3}_1S^3\longrightarrow{\rm{Map}}^1(S^4,BS^3)\xrightarrow{ev} BS^3.
\end{equation}
According to \cite{AK}  $o(\delta_1)=12$. Thus given $K,K'\in\mathbb Z^r$, if $\gcd(12,k_1,\dots,k_r)=\gcd(12,k_1',\dots,k_r')$ then $\lambda(K)=\lambda(K')$ and from the homotopy equivalence \eqref{e:decomggM} it follows that $\mathcal G^K(M)\simeq \mathcal G^{K'}(M).$

Now suppose that $\mathcal G^K(M)\simeq\mathcal G^{K'}(M)$. Then there are isomorphisms  
\begin{equation}\label{e:hg1}
\pi_n(\mathcal G^K(M))\cong\pi_n(\mathcal G^{K'}(M))
\end{equation}
for all $n\geq0$.  The homotopy equivalence \eqref{e:decomggM}  induces an isomorphism
\begin{equation}\label{e:hg2}
\pi_2(\mathcal G^K(M))\cong\pi_2(\mathcal G^{\lambda(K)}(S^4)) \oplus\bigoplus_{i=1}^{r-1}\pi_{6}(S^3)\oplus\bigoplus_{i=1}^{r-\bar t}\pi_5(S^3)\oplus\pi_2({\rm{Map}}^*(Y_{\mathcal F},S^3))
\end{equation}
 Notice that $[\Sigma^2 Y_{\mathcal F},S^3]=\pi_2({\rm{Map}}^*(Y_{\mathcal F},S^3))$ and there exists an exact sequence 
\begin{equation}
\pi_9(S^3)\to[\Sigma^2 Y_{\mathcal F},S^3]\to\pi_5(S^3).
\end{equation}
 Since $\pi_n(S^3)$ is finite for $n>3$ 
 then $[\Sigma^2 Y_{\mathcal F},S^3]$ is finite. 
 Since  $o(\delta_1)=12$, the exact sequence 
 
 \begin{equation*}
\xymatrix{
\pi_4(BS^3)\ar[d]^-{\cong}\ar[rr]^-{(\delta_{\lambda(K)})^*}&&\pi_3(\Omega^4 BS^3)\ar[d]^-{\cong}\ar[r]&\pi_3(B\mathcal G^{\lambda(K)}(S^4))\ar[r]&0\\
\pi_{3}(S^3)\ar[rr]^-{(\lambda(K)\circ\delta_1)^*}&&\pi_6(S^3)=\mathbb Z_{12}
}
\end{equation*}
 shows that $|\pi_2(\mathcal G^{\lambda(K)}(S^4))|=\lambda(K)$. Observe that the group $$A=\bigoplus_{i=1}^{r-1}\pi_{6}(S^3)\oplus\bigoplus_{i=1}^{r-\bar t}\pi_5(S^3)\oplus\pi_2({\rm{Map}}^*(Y_{\mathcal F},S^3))$$ is finite. It follows from  equations \eqref{e:hg1} and \eqref{e:hg2} that 
 $$\lambda(K)|A|=|\pi_2(\mathcal G^{\lambda(K)}(M))|=|\pi_2(\mathcal G^{\lambda(K')}(M))|=\lambda(K')|A|.$$
Therefore $\gcd(12,k_1,\dots,k_r) =\lambda(K)=\lambda(K')=\gcd(12,k_1',\dots,k_r').$
\end{proof}

\begin{ack}
I would like to thank Stephen Theriault and Shizuo Kaji for invaluable conversations and feedback on  this manuscript. I also want to thank Jacek Brodzki and the JTD group at the University of Southampton for giving me excellent conditions to work. This research was partially supported by the Mexican National Council for Science and Technology (CONACyT) through the scholarship 313812 and by the Grace Chisholm Fellowship of the London Mathematical Society.
\end{ack}

\bibliographystyle{amsplain}
\bibliography{bibliography}
\end{document}